\newtheorem*{theorem*}{Theorem}
\newtheorem{theorem}{Theorem}
\newtheorem{prop}{Proposition}[section]
\newtheorem{lemma}[prop]{Lemma}
\newtheorem{coroll}[prop]{Corollary}
\newtheorem{remark}[prop]{Remark}
  \def\F{\mathcal{F}} 
 \def\I{\mathcal{I}}  
 \def\S{\mathcal{S}} 
\def\g{\mathfrak{g}}
\def\TT{\mathbb{T}}
 \def\LL{\mathcal{L}}  
\def\RR{\mathfrak{R}} 
\def\r{\mathfrak{r}}
\def\E{\mathbb{E}} \def\N{\mathbb{N}} \def\P{\mathbb{P}}
 \def\R{\mathbb{R}} \def\Z{\mathbb{Z}}
\def\Qsp{\mathcal{Q}} \def\Nsp{\mathcal{P}}
\def\n{\mathfrak{n}}
\def\X{\mathfrak{X}}
\def\XX{\hat{\X}}
\def\un{\mathbf{1}}
\def\Amb{\mathrm{Amb}}
\renewcommand{\le}{\leqslant}
\begin{document}

\author{Jean B\'{e}rard, Didier Piau}

\address[Jean B\'{e}rard]{\noindent Universit\'e de Lyon ;
Universit\'e Lyon 1 ;
Institut Camille Jordan CNRS UMR 5208 ;
43, boulevard du 11 novembre 1918,
F-69622 Villeurbanne Cedex; France.
 \newline e-mail: \rm
  \texttt{Jean.Berard@univ-lyon1.fr}}

\address[Didier Piau]{\noindent Institut Fourier - UMR 5582,
  Universit\'e Joseph Fourier Grenoble 1, 100 rue des Maths, BP 74,
  38402 Saint Martin d'H\`eres, France.
  \newline e-mail: \rm \texttt{Didier.Piau@ujf-grenoble.fr}}

\date{\today}

\title[CFTP times with ambiguities for particle systems]{Coupling from the past times 
with ambiguities and perturbations of interacting particle systems}

\keywords{Interacting particle systems, Coupling, Perturbations, Stochastic models of 
nucleotide substitutions}

\subjclass[2000]{60J25, 60K35, 92D20}

\begin{abstract}
We discuss coupling from the past techniques (CFTP) for perturbations of interacting particle systems on $\Z^d$ with a finite set of states, within the framework of the graphical construction of the dynamics based on Poisson processes. We first develop general results for what we call CFTP times with ambiguities. These are analogous to classical coupling (from the past) times,  except that the coupling property holds only provided that some ambiguities concerning the stochastic evolution of the system are resolved. If these ambiguities are rare enough on average, CFTP times with ambiguities can be used to build actual CFTP times, whose properties can be controlled in terms of those of the original CFTP time with ambiguities. We then prove a general perturbation result, which can be stated informally as follows. Start with an interacting particle system possessing a CFTP time  whose definition involves the exploration of an exponentially integrable number of points in the graphical construction, and which satisfies the positive rates property. Then consider a perturbation obtained by adding new transitions to the original dynamics.  Our result states that, provided that the perturbation is small enough (in the sense of small enough rates), the perturbed interacting particle system too possesses a CFTP time (with nice properties such as an exponentially decaying tail). 
The proof consists in defining a CFTP time with ambiguities for the perturbed dynamics, from the CFTP time for the unperturbed dynamics.  Finally, we discuss examples of particle systems to which this result can be applied. Concrete examples include a class of neighbor-dependent nucleotide substitution model, and variations of the classical voter model, illustrating the ability of our approach to go beyond the case of weakly interacting particle systems. 
\end{abstract}

\maketitle
%

\section{Introduction}

The present paper discusses coupling from the past (CFTP) techniques for interacting particle systems. The key idea of CFTP, as described in the seminal paper \cite{ProWil} by Propp and Wilson, consists in simulating coupled trajectories of a finite state-space Markov chain from further and further in the past, until eventually the present state of the Markov chain is the same for all trajectories, regardless of their starting point. One thus obtains an exact realization of the stationary distribution of the corresponding Markov chain, and, under a certain  monotonicity condition on the transitions of the chain, CFTP leads to a practical algorithm for sampling from the stationary distribution. Many extensions of this scheme have been developed since, notably to include processes on more general state-spaces, and situations where the monotonicity condition is not met (see the online bibliography \cite{Wil}).

Here, we consider interacting particle systems in the sense of \cite{Lig}, that is, continuous-time Markov processes describing the evolution of a system of states attached to the sites of $\Z^d$, the evolution at a site being governed by local transition rates involving the states of the neighboring sites. Our discussion is limited to particle systems with a finite state space whose dynamics can be prescribed by a finite family or transition rules (see below for a precise definition). For an ergodic particle system, one is interested in using CFTP to sample from the stationary distribution of the system. In general, it is not feasible to sample from the full stationary distribution, if only because a full configuration of the particle system is an infinite-dimensional object, comprising one definite state for each site of $\Z^d$. A more reasonable goal is the following: given a finite set of sites in $\Z^d$, use CFTP to sample from the marginal of the stationary distribution on this set of sites. This turns out to be possible when the interacting particle system possesses what we call in this paper a CFTP time, a precise definition being given below.

In the rest of this introduction, we give a quick overview of our results, some formal definitions and statements being postponed to later sections. A discussion of the existing literature and how the present work fits into it, is given at the end of the introduction.

\subsection{Interacting particle system dynamics and the graphical construction}

We consider interacting particle systems with a finite state space $\S$, whose set of sites is $\Z^d$ for some $d \geq 1$. To specify the dynamics of the system, we use the notion of a transition rule. Such a rule is a triple $\RR = (f, A, r)$, where $A$ is a finite subset of $\Z^d$, $f \ : \ \S^A \to S$ is a map, and $r \geq 0$ is a non-negative real number. Given a configuration of the system $\eta = (\eta(z))_{z \in \Z^d} \in \S^{\Z^d}$, 
and $x \in \Z^d$, we denote by $\RR^x \eta$ the configuration defined by $(\RR^x \eta)(x) = f(  (  \eta(x+y))_{y \in A}  )$ and  $(\RR^x \eta)(z) = \eta(z)$ for $z \neq x$. (Our convention when $A=\emptyset$ is that the set $\S^A$ is a singleton on which $f$ takes a single well-defined value.)  

Now, given a finite list of such transition rules $(\RR_i)_{i \in \I}=(f_i, A_i, r_i)_{i \in \I}$, we consider the interacting particle system dynamics specified by the infinitesimal generator $\LL$ satisfying 
\begin{equation}\label{e:inf-generator}\LL \phi (\eta) = \sum_{i \in \I} \sum_{x \in \Z^d} r_i \left(  \phi( \RR^x \eta  )  - \phi(\eta) \right),\end{equation} 
for all functions $\phi      \        :          \       \S^{\Z^d} \to \R$ that depend only  on a finite number of coordinates. Informally, \eqref{e:inf-generator} means that, during an infinitesimal time-interval of length $dt$, independently at each site $x \in \Z^d$, the transformation $\RR_i^x$ is applied to the current system configuration with probability $r_i dt$. 
It is standard to check (see \cite{Lig}) that \eqref{e:inf-generator} uniquely characterizes a c\`adl\`ag continuous-time Markov process $(\eta_t)_t$ on $\S^{\Z^d}$ equipped with the product topology.

In the sequel, we assume that the dynamics is in fact built through the graphical construction associated with the list of rules $(\RR_i)_{i \in \I}$ (see \cite{Lig2} for examples of this construction). Specifically, we consider a Poisson point process $\Nsp$ on $\Z^d  \times \I  \times \R$ with intensity $J$ defined by 
$dJ(x,i,t) := r_i d (c_{\Z^d} \otimes c_{\I}  \otimes \ell_{\R})(x,i,t)$, where $c_{\Z^d}$ and $c_{\I}$ denote the counting measure respectively on $\Z^d$ and $\I$, while $\ell_{R}$ denotes the Lebesgue measure on $\R$.
The realization of the point process $\Nsp$ prescribes the dynamics of the particle system through the fact that, for every $x$, $(\eta_t(x))_t$ is a jump process whose state may change only at times $t$ for which there exists an (a.s. unique) $i$ such that $(x,i,t) \in \Nsp$, and that, for such a $t$, one has    
\begin{equation}\label{e:graph-construct}  \eta_t = \RR_i^x ( \eta_{t-}) .\end{equation} 
Given $t_1 \leq t_2$ and $\xi \in \S^{\Z^d}$, define $\Phi_{t_1}^{t_2}(\xi)$ to be the configuration of the system at time $t_2$ obtained by starting in configuration  $\xi$ at time $t_1-$, and using the transitions specified by $\Nsp$ through \eqref{e:graph-construct}. We refer to  $\Phi$ as the stochastic flow on $\S^{\Z^d}$ induced by $\Nsp$. Given $t \leq 0$, we use the notation $\Nsp_t = \Nsp \cap (\Z^d \times [t,0[ \times \I )$, and let $\F_t = \sigma(\Nsp_t)$.

\subsection{CFTP times with and without ambiguities}

We now consider the coupling properties of the flow $\Phi$. We say that a negative and a.s. finite random variable $T$ is a CFTP time (for site zero) if the following property holds on the event\footnote{Throughout the paper, we adopt the convention that all possible pathologies of CFTP times and their variants are concentrated on the event that the corresponding time takes the value $-\infty$, so that one does not have to bother excluding undesirable zero probability events when the corresponding time takes a finite value.} $\{  T > -\infty   \}$,
\begin{equation}\label{e:coupling} \mbox{for all }\xi_1, \xi_2 \in \S^{\Z^d}, \  [ \Phi_{T}^{0-}(\xi_1) ](0)  =    [ \Phi_{T}^{0-}(\xi_2) ](0).\end{equation}

One checks that the existence of a CFTP time implies ergodicity of the particle system. Moreover, starting from an arbitrary configuration $\xi \in \S^{\Z^d}$, 
the distribution of $[\Phi_T^{0-}(\xi)](0)$ is exactly the marginal at site $0$ of  the corresponding stationary distribution. To obtain a sample from the marginal of this distribution on an arbitrary finite set of sites, one then has to repeat (after suitable translation) the procedure leading to $[\Phi_T^{0-}(\xi)](0)$ to each site in the set of interest.

The notion of CFTP time with ambiguities is a weakening of the notion of CFTP time, in which property \eqref{e:coupling} holds only when the "ambiguities" associated with the rules attached to a specific random subset $H$ of $\Nsp_T$, are resolved.   To give a precise definition, let us consider, for each $\alpha=(x,i,t) \in \Nsp$, and $s<t$, the random variable $e(\alpha,\xi,s)$ denoting the value at site $x$ produced by the application of the rule attached to $\alpha$ when starting in state $\xi$ at time $s-$, more formally:
\begin{equation} e(\alpha,s,\xi)  = [\Phi_s^t(\xi)](x).\end{equation}
When there exist two distinct $\xi_1, \xi_2$ such that $e(\alpha, s, \xi_1) \neq e(\alpha, s, \xi_2)$, we say that there is an ambiguity as to the result of the application of the rule attached to $\alpha$, when we start at time $s-$.
A coupling time with ambiguities consists of a negative a.s. finite random variable $T$, together with a random subset $H$ of $\Nsp_T$, which is assumed to be finite on the event $\{  T > -\infty   \}$, and such that the following  modification of \eqref{e:coupling} holds:
\begin{gather}\label{e:coupl-ambig} \mbox{ for all }\xi_1, \xi_2 \in \S^{\Z^d},  \ [ \Phi_{T}^{0-}(\xi_1) ](0)  =    [ \Phi_{T}^{0-}(\xi_2) ](0) \mbox{ provided that } \\ 
 e(\alpha, T, \xi_1) =    e(\alpha, T, \xi_2) \mbox{ for all }\alpha \in H. \nonumber \end{gather}
Note that, when $H= \emptyset$,  \eqref{e:coupl-ambig} is exactly \eqref{e:coupling}. In addition, 
we require that $H$ has the stopping property,  i.e. $H \cap \Nsp_t$ is $\F_t$-measurable for all $t$.

\subsection{Description of the main results}

Our first main result is that,  starting from a CFTP time with ambiguities $(T,H)$, one can build an actual CFTP time $T^*$, provided that $H$ contains few enough points on average. To give a precise statement, introduce the quantity 
\begin{equation}\label{d:growth} \g := \E \left( \sum_{(x,t,i) \in H}     |A_i|  \right).  \end{equation}
\begin{theorem}\label{t:finitude}
If $\g<1$, one can construct a CFTP time $T^*$ for the interacting particle system. 
\end{theorem}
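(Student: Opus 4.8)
The plan is to attach to $\Nsp$ a recursive ``resolution tree'', to show that it is almost surely finite when $\g<1$, and to read off $T^*$ as the infimum of the CFTP-with-ambiguities times sitting at its nodes. The first point to record is that the construction producing $(T,H)$ from $\Nsp$ is measurable and covariant under the space translations of $\Z^d$ and the time translations of $\R$, while $\Nsp$ is invariant in law under all of these; hence for every pair $(x,t)\in\Z^d\times(-\infty,0]$ there is a canonically defined CFTP time with ambiguities $(T^{(x,t)},H^{(x,t)})$ ``for site $x$ at time $t$'', obtained by transporting $(T,H)$. It satisfies $T^{(x,t)}<t$, $H^{(x,t)}\subseteq\Nsp\cap(\Z^d\times\I\times[T^{(x,t)},t[)$, the analogue of \eqref{e:coupl-ambig} with $0$ and site $0$ replaced by $t$ and site $x$, and the stopping property relative to the time-shifted filtration; moreover it has the same law as $(T,H)$ up to shift, so $\E\big(\sum_{(x',i,t')\in H^{(x,t)}}|A_i|\big)=\g$. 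Now build a rooted tree $\mathfrak{T}$: its root is $(0,0)$, carrying $(T^{(0,0)},H^{(0,0)})=(T,H)$; and a node $v=(x,t)$ with ambiguity set $H^{(x,t)}$ has, for each $\alpha=(x',i,t')\in H^{(x,t)}$ and each $y\in A_i$, a child node $(x'+y,t')$. The motivation is the one-step identity $e(\alpha,s,\xi)=[\Phi_s^{t'}(\xi)](x')=f_i\big(([\Phi_s^{t'-}(\xi)](x'+y))_{y\in A_i}\big)$: resolving the ambiguity carried by $\alpha$ amounts exactly to making $[\Phi_s^{t'-}(\cdot)](x'+y)$ constant for every $y\in A_i$, which is a CFTP requirement at the child nodes $(x'+y,t')$.

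Next I would show that $\mathfrak{T}$ is almost surely finite. It is locally finite, since each $H^{(x,t)}$ is a.s. finite and each $A_i$ is finite, so by König's lemma it suffices to rule out an infinite branch, for which I will estimate the expected number of branches of $\mathfrak{T}$ down to generation $n$. Along any branch the time coordinates strictly decrease (if $(x'+y,t')$ is a child of $(x,t)$ then $t'\in[T^{(x,t)},t[$), and this is the key to the estimate: the offspring count $C_v:=\sum_{(x',i,t')\in H^{(x,t)}}|A_i|$ of a node $v=(x,t)$ is a measurable function of $\Nsp$ restricted to times strictly below $t$, whereas the event that a given chain of points $\alpha_1,\dots,\alpha_k\in\Nsp$ realizes a branch down to a node with time coordinate $s$ is, by the stopping property of the successive $H$'s, measurable with respect to $\Nsp$ restricted to times at or above $s$. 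These two pieces of randomness live on disjoint time windows of $\Nsp$, hence are independent; combining this with $\E(C_v)=\g$ and organizing the bookkeeping with the (multivariate) Mecke equation for $\Nsp$ yields the recursion $\E(\#\{\text{generation-}n\text{ branches}\})=\g\cdot\E(\#\{\text{generation-}(n-1)\text{ branches}\})=\g^n$. Since $\g<1$, $\sum_n\P(\text{generation }n\text{ nonempty})\le\sum_n\g^n<\infty$, so a.s. some generation is empty and $\mathfrak{T}$ is finite. \textbf{This is the step I expect to be the main obstacle:} not the idea, but making the independence-across-branches argument fully rigorous, i.e. correctly disintegrating over the random times $t'(\alpha)$ and checking that the covariance and stopping properties of $(T,H)$ interact with the Poisson structure of $\Nsp$ exactly as claimed.

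Finally, set $T^*:=\inf\{T^{(x,t)}:(x,t)\text{ a node of }\mathfrak{T}\}$, with $T^*:=-\infty$ on the null event that $\mathfrak{T}$ is infinite or carries a node with time $-\infty$. Then $T^*\le T<0$, and $T^*>-\infty$ a.s. because $\mathfrak{T}$ is a.s. finite with all node-times a.s. finite. It remains to verify \eqref{e:coupling} for $T^*$, which I would do by induction up $\mathfrak{T}$ from the leaves, using two facts: (i) a push-back property, namely that if $[\Phi_S^{s-}(\cdot)](z)$ is constant then so is $[\Phi_{S'}^{s-}(\cdot)](z)$ for any $S'\le S$, via the flow identity $\Phi_{S'}^{s-}=\Phi_S^{s-}\circ\Phi_{S'}^{S-}$; and (ii) the one-step identity above. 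At a leaf $(z,s)$ one has $H^{(z,s)}=\emptyset$, so the shifted \eqref{e:coupl-ambig} gives that $[\Phi_{T^{(z,s)}}^{s-}(\cdot)](z)$ is constant, hence by (i) so is $[\Phi_{T^*}^{s-}(\cdot)](z)$. At an internal node $(x,t)$, the inductive hypothesis applied to its children gives that $[\Phi_{T^*}^{t'-}(\cdot)](x'+y)$ is constant for every $\alpha=(x',i,t')\in H^{(x,t)}$ and $y\in A_i$, so by (ii) $e(\alpha,T^*,\cdot)$ is constant for every $\alpha\in H^{(x,t)}$; applying the shifted \eqref{e:coupl-ambig} for $(T^{(x,t)},H^{(x,t)})$ to the configurations $\Phi_{T^*}^{T^{(x,t)}-}(\xi)$ — legitimate since $T^*\le T^{(x,t)}$, and with the proviso satisfied because $e(\alpha,T^{(x,t)},\Phi_{T^*}^{T^{(x,t)}-}(\xi))=e(\alpha,T^*,\xi)$ is constant in $\xi$ — shows that $[\Phi_{T^*}^{t-}(\cdot)](x)$ is constant. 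At the root this is precisely \eqref{e:coupling}, so $T^*$ is a CFTP time for site zero.
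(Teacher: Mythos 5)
Your proposal follows essentially the same route as the paper: your tree $\mathfrak{T}$ is exactly the paper's sequence $(\Amb_n)_{n\geq 0}$, your Mecke-equation first-moment bound $\g^n$ on generation sizes is its Proposition \ref{p:branch-ambig} (proved precisely via the refined Campbell theorem together with the stopping property of $H$, which is the independence-across-disjoint-time-windows point you flag), and your leaf-to-root induction is its Proposition \ref{p:base-ambig}. The one step you pass over quickly — that a.s.\ no node carries $T^{(x,t)}=-\infty$ — cannot be deduced by a union bound over fixed $(x,t)$ because the nodes are random points selected by the process itself; the paper handles it with the same Campbell-type estimate applied to $D=\un(T=-\infty)$ (Corollary \ref{c:branch-ambig}), which the machinery you already set up also delivers.
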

The construction of $T^*$ is explained in Section \ref{s:definitions}.  Here is an informal description. Starting with  $\Amb_0 := \{ (0,0)  \}$, we recursively define a sequence $(\Amb_n)_{n \geq 0}$ of random subsets of $\Z^d \times \R$ in the following way. First, we apply the coupling time with ambiguities $(T,H)$ at each space-time point in $\Amb_n$. This generates a set of elements of $\Nsp$, with respect to which ambiguities have to be resolved. Then $\Amb_{n+1}$ is defined as the set of space-time points upon which the resolution of these ambiguities directly depends, i.e. for $\alpha=(x,i,t)$, the set $\{ (x+y, t);   \  y \in A_i \}$. The overall set of points generated by this process is  $\Amb_{\infty}   := \bigcup_{n \geq 0} \Amb_n,$ and $T^*$ is defined as the lowest value of $T$ obtained when applying the coupling time with ambiguities $(T,H)$ to the space-time points in $\Amb_{\infty}$. The idea underlying this construction is that, if $\Amb_{\infty}$ is finite, one can resolve ambiguities in a step-by-step manner, starting from the points in $\Amb_{\infty}$ that are furthest in the past and thus associated with an empty set of ambiguities, down to the origin where we can determine the value of $[\Phi_{T^*}^{0-}(\xi)](0)$ (a precise formulation is given in Proposition \ref{p:base-ambig} in Section \ref{s:preuve-1}). 

Without giving precise statements (see Theorems \ref{t:temps-exp} and \ref{t:espace-exp} in Section \ref{s:complete-results}), let us mention that, in addition to Theorem \ref{t:finitude}, it is possible to obtain estimates on the tail of $T^*$ and on  the range of its space-dependence (in terms of bounds on exponential moments), from analogous properties for $T$ and $H$.

Our second main result deals with perturbations of interacting particle systems. To formalize this notion, consider  a particle system whose dynamics is defined by a list of rules
$(\RR_i)_{i \in \I^u}$. This corresponds to the original, unperturbed, particle system. Then consider the dynamics defined by a list of rules of the form $(\RR_i)_{i \in \I^u \cup \I^p}$, where $\I^p$ is disjoint from $\I^u$. This corresponds to the perturbed particle system. Our result gives general conditions under which the existence of a CFTP time $T^u$ for the unperturbed particle system leads to the existence of a CFTP time for the perturbed particle system, provided that the perturbation is small enough. Our first condition is that the unperturbed dynamics possesses the positive rates property,  which means that, for every $v \in \S$, there exists a rule with index in $\I^u$ whose application inconditionally leads to the value $v$. Our second condition requires that the definition of $T^u$ involves the exploration of an exponentially integrable\footnote{For a non-negative random variable $X$, we say that $X$ is exponentially integrable if there exists $\mu > 0$ such that $E(e^{\mu X})<+\infty$.} number of points in $\Nsp$, a notion whose precise formulation is given in Section \ref{s:definitions}, and involves what we call the exploration process associated with $T^u$. Finally, the smallness of the perturbation is measured through two parameters $\epsilon$ and $\kappa$, that admit explicit definitions in terms of $(\RR_i)_{i \in \I^u}$ and $(\RR_i)_{i \in \I^p}$ (see Section \ref{ss:pos-rates-prop}).

\begin{theorem}\label{t:theoreme-principal}
Assume that the unperturbed dynamics has the positive rates property, and possesses a CFTP time associated with an exploration process whose total number of points is exponentially integrable. Then, for any perturbation with small enough $\epsilon$ and $\kappa$, we can construct a CFTP time with ambiguities $(T,H)$ for the perturbed dynamics, satisfying condition \eqref{d:growth}, so that the corresponding $T^*$ is a CFTP time for the perturbed dynamics. 
\end{theorem}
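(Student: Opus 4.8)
The plan is to build the perturbed CFTP time with ambiguities $(T,H)$ by running the unperturbed exploration process, but treating every perturbation event encountered along the way as an ambiguity to be resolved later. Concretely, I would first set $T := T^u$, understanding the unperturbed CFTP time as now being applied inside the \emph{perturbed} graphical construction $\Nsp = \Nsp^u \sqcup \Nsp^p$. The key point is that the unperturbed flow and the perturbed flow agree on any space-time region that contains no point of $\Nsp^p$; so the exploration process associated with $T^u$, run on $\Nsp$, explores exactly the same set of points as it would on $\Nsp^u$ \emph{unless} it meets a perturbation point. I would then define $H$ to be the set of perturbation points $\alpha \in \Nsp^p$ met by the exploration process before time $T^u$, i.e. $H \subseteq \Nsp_{T^u}^p$. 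On the event that no ambiguity is triggered --- equivalently, that resolving the $H$-ambiguities consistently forces the perturbed transitions to coincide with unperturbed ones --- the coupling property \eqref{e:coupling} is inherited verbatim from $T^u$, which gives \eqref{e:coupl-ambig}. The stopping property of $H$ follows because the exploration process is adapted: whether a perturbation point in $\Nsp_t$ belongs to $H$ is decided by the exploration run, which only looks at $\Nsp_t$.

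The next step is to verify the smallness condition $\g < 1$ from \eqref{d:growth}. Here I would use the positive rates property of the unperturbed dynamics: because for each $v \in \S$ there is an unperturbed rule forcing the value $v$, the unperturbed exploration process typically terminates quickly, and in fact the number of points it explores is exponentially integrable by hypothesis. A perturbation point enters $H$ only if the exploration visits its space-time location, and the expected number of such visits is controlled by the (exponentially integrable) size of the exploration set times the perturbation rate, which is governed by $\epsilon$ (the total rate of added transitions); the factor $|A_i|$ in \eqref{d:growth} is then bounded using $\kappa$ (a bound on the ranges of the perturbing rules). Taking $\epsilon$ and $\kappa$ small enough makes $\g$ as small as we like, in particular $\g < 1$, so Theorem \ref{t:finitude} applies and produces the desired $T^*$. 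One subtlety to handle carefully is that once an ambiguity is resolved, the exploration may continue into a region it had not previously examined; this is exactly what the recursive sets $\Amb_n$ in the construction of $T^*$ are designed to capture, and I would need to check that the ``one-step'' dependence set $\{(x+y,t) : y \in A_i\}$ of a perturbation point $\alpha = (x,i,t) \in H$ indeed contains everything the resolution depends on, so that the recursion closes up.

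The main obstacle I anticipate is the bookkeeping needed to make the informal phrase ``the definition of $T^u$ involves the exploration of an exponentially integrable number of points'' into something robust enough to survive the perturbation. In the unperturbed world the exploration process is a well-defined $\F$-adapted procedure, but once perturbation points appear, the run of this procedure on $\Nsp$ may diverge from its run on $\Nsp^u$ precisely at the perturbation points --- and after an ambiguity is resolved "the wrong way" the continued exploration could, a priori, be much larger than the unperturbed one. The heart of the argument is therefore a comparison/domination estimate showing that, conditionally on the perturbation points being sparse, the perturbed exploration (with ambiguities) is stochastically dominated by a branching-type process whose expected total size is finite and, more importantly, whose expected total $\sum |A_i|$-weighted size is $<1$ when $\epsilon,\kappa$ are small. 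Once that domination is in place, everything else --- the stopping property, the validity of \eqref{e:coupl-ambig}, and the final invocation of Theorem \ref{t:finitude} --- is essentially formal. I would also want to double-check that the positive rates property is genuinely used (not just convenient): it is what guarantees that a suitable "refreshing" unperturbed rule is always available to terminate a branch of the exploration, keeping the dominating process subcritical.
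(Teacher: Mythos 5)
There is a genuine gap, and you have in fact located it yourself without filling it: your construction of $(T,H)$ is not well defined. You propose to set $T:=T^u$ ``run on $\Nsp$'' and to let $H$ be the perturbation points met by the exploration before time $T^u$. But the exploration map $\theta$ prescribes which box to examine next as a function of the points already explored, and once the process hits a point $\alpha=(x,i,t)$ with $i\in\I^p$, the subsequent course of the exploration depends on the value produced at $\alpha$ --- which is exactly the ambiguity, and which depends on the initial configuration $\xi$. Consequently ``the set of perturbation points met before $T^u$'' is not a function of $\Nsp$ alone, and neither is $T^u$ itself on this event; a CFTP time with ambiguities requires $T$ and $H$ to be $\xi$-independent random objects with the stopping property, to which \eqref{e:coupl-ambig} then refers. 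Your closing remark that one needs ``a comparison/domination estimate'' against ``a branching-type process'' points at the right phenomenon, but the branching object is not merely a dominating bound used in the estimates: it is the definition. The paper's construction, when it meets $\alpha=(x,i,t)$ with $i\in\I^p$, splits the exploration into one branch for each possible output value $v\in f_i(\S^{A_i})$, replacing $\alpha$ in that branch by the fictitious unconditional point $(x,\iota_v,t)$; this is precisely where the positive rates property is used (it supplies the rules $\iota_v$ that unconditionally produce $v$, so each branch is again a legitimate unperturbed-type exploration), not, as you guess, to make the exploration subcritical. The resulting tree $\TT$ yields $\XX_\infty$, and then $T:=\inf\{t:(x,i,t)\in\XX_\infty\}$ and $H:=$ the perturbative points at internal nodes; both are functions of $\Nsp$ with the stopping property, and \eqref{e:coupl-ambig} holds because resolving the ambiguities selects one root-to-leaf path in $\TT$ along which the unperturbed coupling argument applies.

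Two smaller inaccuracies. First, the exponential integrability of $|\X^u_\infty|$ is a hypothesis of the theorem, not a consequence of positive rates, and the quantitative step is a change-of-measure comparison $d\hat K\le(1+\epsilon)\,dK$ between the branching exploration kernel and the unperturbed one, giving $\E(|\TT_\ell|)\le(1+\epsilon)^\ell\,\P^u(|\X^u_\infty|\ge\ell)$; summability then needs $\epsilon$ smaller than the exponential decay rate of the tail, and $\g<1$ needs $\kappa$ small. Second, your glosses of $\epsilon$ and $\kappa$ do not match \eqref{e:def-epsilon} and \eqref{e:def-kappa}: $\epsilon$ compares, for each $v$, the perturbative rates able to produce $v$ with the rate $r_{\iota_v}$ of the unconditional rule (it controls the branching overhead), while $\kappa$ is the $|A_i|$-weighted total perturbative rate relative to the total rate (it controls the expected contribution to $\g$ per exploration step). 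These are secondary to the main issue above, which is that without the tree-indexed splitting your $(T,H)$ is not a CFTP time with ambiguities in the sense required by Theorem \ref{t:finitude}.
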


The construction of $(T,H)$ is explained in Section \ref{s:definitions}. Note that, in addition to satisfying \eqref{d:growth}, $(T,H)$ also satisfies the assumptions of Theorems \ref{t:temps-exp} and \ref{t:espace-exp} for sufficiently small $\epsilon$ and $\kappa$, leading to exponential moment bounds on the tail of $T^*$ and on the range of its space-dependence. This extension of Theorem \ref{t:theoreme-principal} is stated as Theorem \ref{t:theoreme-principal-extension} in Section \ref{s:complete-results}.

We illustrate Theorem \ref{t:theoreme-principal} with applications to several kinds of interacting particle systems. A first class of examples is given by systems that satisfy what we call the finite factor property (see Section \ref{s:applications}).  Informally, this means that the state of a site at a certain time depends on the points in the graphical construction and on the initial condition only through a window of fixed size around $x$. The simplest example is provided by dynamics for which distinct sites do not interact, whose perturbations correspond to weakly interacting interacting particle systems. A more elaborate concrete example is a family of nucleotide substitution models called RN+YpR (see \cite{BerGouPia}), which allows for arbitrarily strong interactions between neighboring sites and yet satisfies the finite factor property. Another class of systems to which we apply Theorem \ref{t:theoreme-principal}  (and for which the finite factor property is not satisfied), is given by noisy voter models on $\Z^d$. Specifically, we consider the case of a classical linear voter model  with an arbitrary finite alphabet, and a variation we call the voter model with asymmetric polling, that uses the classical $\{  +, - \}$ alphabet (see Section \ref{s:applications}). Note that these examples too extend beyond the weakly interacting case. 

Although we do not enter into the details here, let us note that the existence of a CFTP algorithm is not only interesting for simulation purposes, but can also provide interesting theoretical results on the particle system. Indeed, the existence of a CFTP time automatically implies that the interacting particle system is ergodic, and estimates on the tail of the CFTP time such as those provided by Theorem \ref{t:temps-exp} immediately lead to bounds on the speed of convergence to the stationary distribution. Similarly, estimates on the range of the space-dependence such as those provided by Theorem \ref{t:espace-exp}  yield bounds on the decay of spatial correlations. As a consequence, our results can be readily used to derive interesting conclusions about the behavior of the perturbed particle systems to which Theorem \ref{t:theoreme-principal} applies.

Also, note that, in this paper, we do not explicitly address issues related to the practical implementation of CFTP. However,  from the definition of $T^*$ in terms of $T$ and $H$ it should be clear that, if $T$ and $H$ lend themselves to an actual algorithmic implementation, so is the case for $T^*$. Similarly, Theorem \ref{t:theoreme-principal} yields an actual CFTP algorithm for the perturbed particle system provided that $T^u$ and the associated exploration process are compatible with an actual algorithmic implementation.

Finally, let us point out that a key role in the proof of our results is played by first-moment arguments, that allow us to essentially bypass the quite intricate analysis of dependencies that would otherwise be required to study the combinatorial objects we have to deal with (e.g. the sequence  of sets $(\Amb_n)_{n \geq 0}$, or  the tree-indexed  exploration process $\XX$ used to define a coupling time with ambiguities from $T^u$). In fact, first-moment arguments allow us to largely ignore these dependencies and obtain results in very much the same way as for classical branching processes.

\subsection{Discussion}

For ergodic particle systems satisfying a monotonicity condition similar to that of \cite{ProWil}, CFTP is always possible, as shown by  van den Berg and Steif in \cite{vdBSte}. 
For systems lacking monotonicity, CFTP algorithms have been developed under "high-noise" or "weak interaction" type assumptions, meaning that the strength of the interaction between neighboring sites has to be sufficiently small. In other words, the particle system under consideration has to be a sufficiently small perturbation of a 
system in which distinct sites do not interact. One example is given by Haggstr\"om and Steif \cite{HagSte}, who use a bounding set approach to control the coalescence of trajectories (see also \cite{dSaPic} for some refinements). Another example is given by Galves, Garcia, L\"ocherbach \cite{GalGarLoc} (see also \cite{GalGarLocOrl, GalLocOrl}), whose approach is based on a branching construction\footnote{A very similar construction was already used in \cite{FerFerGar} to devise CFTP algorithms in a different framework. In fact, various constructions of this kind appear in the literature, though not explicitly in the context of CFTP, see e.g. \cite{Fer, DobKryToo}.} of which the one we use in the present paper can be seen as a generalization. One interest of the present paper is that it provides a general criterion under which small perturbations of an interacting particle system retain some of the CFTP properties of the original unperturbed system, allowing one to go beyond the weakly interacting case. Let us mention that some of our applications overlap with the recent paper \cite{MohNewRav}, where a specific kind of perturbation of noisy voter models is considered, and ergodicity is proved for sufficiently small perturbations.  
Finally, note that the present paper is a revised and extended version of an earlier manuscript \cite{BerPia}, where the results were limited to perturbations of RN+YpR nucleotide substitution models.

\subsection{Organization of the paper}

The rest of the paper is organized as follows. Section \ref{s:definitions} contains the definitions of the various notions and objects that were encountered in the introduction but not formally defined. Section \ref{s:complete-results} contains additional results that were not stated in the introduction.  Section \ref{s:applications} contains the examples of application of Theorem \ref{t:theoreme-principal}. Section \ref{s:preuve-1} describes the proofs of Theorems  \ref{t:finitude}, \ref{t:temps-exp} and \ref{t:espace-exp}. Section  \ref{s:preuve-1} describes the proof of Theorem \ref{t:theoreme-principal} (and its extension Theorem \ref{t:theoreme-principal-extension}).

\section{Some formal definitions}\label{s:definitions}

\subsection{Canonical probability space}

 We assume throughout the paper that $\Nsp$ is defined on a canonical  probability space
$(\Omega, \F, \P)$ that we now describe. First, $\Omega$ is the set of locally finite subsets $\omega$ of 
$\Z^d \times \I  \times \R $ satisfying the additional requirements that (i) no two points in $\omega$ share the same $\R-$coordinate, (ii) for every $(x,i) \in \Z^d \times \I$,  both sets   $ \omega \cap (\{ (x,i) \} \times \R_+)$
and   $ \omega \cap (\{ (x,i) \} \times \R_-)$ are infinite, (iii) for every $(x,i,t) \in \omega$, and any sequence $(y_n,j_n,s_n)_{n \geq 0}$ starting at $( y_0,j_0,s_0 ) = (x,i,t)$ and satisfying $y_{n+1} \in y_n+A_{j_n}$ and $s_{n+1} < s_n$ for all $n \geq 0$, one has $\lim_{n \to +\infty} s_n  = -\infty$. Then $\F$ is defined as the $\sigma-$algebra generated by all the maps of the form $\omega \mapsto | \omega \cap B |$, where $B$  is a Borel subset of $\Z^d \times \I \times \R$. Finally, we set $\Nsp(\omega):=\omega$, and $\P$ is uniquely defined on $(\Omega, \F)$ by the requirement that $\Nsp$ is a Poisson process with intensity $J$ (this definition makes sense since (i), (ii), (iii) are almost sure properties of  a Poisson process with intensity $J$).  Note that thanks to (i), (ii), (iii),  $\Phi_{t_1}^{t_2}(\xi)$ is well-defined for any $t_1<t_2$, $\xi \in \S^{\Z^d}$ and $\omega \in \Omega$.

Given $(x,t) \in \Z^d \times \R$, we define the space-time shift $\tau_{x,t}$ on $\Omega$ by $\tau_{x,t}(\omega) := \bigcup_{ (z,i,s) \in \omega  } \{  (z-x,i,s-t) \}.$ (This definition is possible since (i), (ii), (iii) all are shift-invariant properties.) 
Given $a \in \N \cup \{ \pm \infty  \}$, we define $\Nsp^{-a,a} := \Nsp  \cap ([-a, a]^d \times \I \times \R) $, and let $\F^{-a,a} = \sigma(\Nsp^{-a,a})$. On occasions, we use the notation $\Nsp^{-a,a}_t := \Nsp \cap ([-a,a]^d \times [t,0[ \times \I )$.

Finally, to properly define the notion of a random subset of $\Nsp$, we introduce the  space $\tilde{\Omega}$ formed by the subsets of elements of $\Omega$, equipped with the  $\sigma-$algebra $\tilde{\F}$ defined just as $\F$. Note that, as a rule, in the rest of the paper, we mention measurability issues only when they involve a non-trivial argument.

In the sequel, we have to consider two distinct probability spaces, one associated with the unperturbed dynamics, and one with the perturbed dynamics. We denote by  $(\Omega^u, \F^u, \P^u)$ the probability space associated with the unperturbed dynamics specified by  $(\RR_i)_{i \in \I^u}$, while $(\Omega, \F, \P)$ refers to the perturbed dynamics specified by the full list  $(\RR_i)_{i \in \I}$. The corresponding Poisson processes are denoted respectively $\Nsp^u$ and $\Nsp$.

\subsection{Construction of $T^*$}

Given a coupling time with ambiguities $(T,H)$, 
define by induction the following  random sequence of subsets of $\Z^d \times ]-\infty, 0]$:
\begin{equation}\label{e:rec-amb} \Amb_0 := \{ (0,0)  \}, \    \Amb_{n+1} := \bigcup_{(x,t) \in \Amb_n}  \bigcup_{(z,i,s) \in \tau_{x,t}^{-1} \circ H \circ \tau_{x,t}} \bigcup_{y \in A_i} \{ (z+y,s)  \}.\end{equation}
Then let $\Amb_{\infty}   := \bigcup_{n \geq 0} \Amb_n,$
 and $T^*   :=  \inf_{(x,t) \in \Amb_{\infty}} t+T \circ \tau_{x,t}$ in the case where $|\Amb_{\infty}| < +\infty$, while $T^* := -\infty$  otherwise. 

\subsection{Exploration process}

Here, we define the notion of an exploration process attached to an interacting particle system.

Given a non-empty finite subset $B \subset \Z^d$ and $t \leq 0$, define $\n(B,t)$ to be the element of $\Nsp \cap (B \times \I \times ] - \infty , t [)$ with the highest time coordinate\footnote{Note that we are dealing with negative numbers here, so that the highest time coordinate here corresponds to the time coordinate with the least absolute value.}  (this is always well-defined with our choice of $\Omega$). Let $\Omega_f$ denote the set of all finite subsets of sets in $\Omega$, and let $\theta$ denote a measurable map from $\Omega_f$ to the set of finite subsets of  $\Z^d$ (where $\Omega_f$ is equipped with a $\sigma-$algebra defined as $\F$).

The exploration process associated to $\theta$ is defined as follows. Start with $\X_0:=\emptyset$, $\gamma_0:=0$. Then, for all $n \geq 0$, let $B_{n} := \theta(\X_n)$. If $B_n \neq \emptyset$, denote $\n(B_n,\gamma_n)=:(x_n, i_n, t_n)$, and let $\X_{n+1}:=  \X_n \cup \{ (x_n, i_n, t_n) \}$ and $\gamma_{n+1}:=t_n$. 
If $B_n = \emptyset$, then $\X_{n+1} := \X_n, \gamma_{n+1} := \gamma_n$. 
Let $\X_{\infty} := \bigcup_{n \geq 0} \X_n$ and $\gamma_{\infty} := \lim_{n \to +\infty} \gamma_n$.
The total number of points in the exploration process is then defined as $|\X_{\infty}|$.

We say that a CFTP time $T$ defined on $(\Omega, \F, \P)$ is associated with such an exploration process if, on $\{    |\X_{\infty}|<+\infty   \}$, one has $T = \gamma_{\infty}$, or equivalently, $T = \inf \{ t ;    \   (x,i,t) \in \X_{\infty} \}$, while $T=-\infty$ when $|\X_{\infty}|=+\infty $, and if, on $\{ T > -\infty \}$,  the value of $[\Phi^{0-}_{T}(\xi)](0)$, which is the same for every $\xi \in \S^{\Z^d}$, is measurable with respect to $\X_{\infty}$. 

We shall always assume that there exists a deterministic function $\beta \ : \ \N \to \N$ such that for every $\ell \geq 0$, 
\begin{equation}\label{e:borne-taille}  \theta(\X_{\ell}) \subset \{ -\beta (\ell), \ldots, \beta (\ell)   \}^d ,\end{equation}
and such that $\beta(\ell) = O(\ell)$ as $\ell$ goes to infinity.

In the sequel, we assume that a CFTP time $T^u$ for the unperturbed dynamics is defined on $(\Omega^u, \F^u, \P^u)$, and that $T^u$ is associated with an exploration process of the type we have just described. 
We denote the corresponding process by $\X^u$ to emphasize the fact that this process is defined for the unperturbed dynamics, but,  for the sake of readability, we use $\theta$, $x_n$, $i_n$, $t_n$, $\gamma_n,$ etc. instead of the awkward $\theta^u$, $x_n^u$, $i_n^u$, $t_n^u$, $\gamma_n^u$, etc. 

\begin{remark}
Given a CFTP time $T$ and an exploration process $(\X_n)_{n}$, the fact that  $T = \gamma_{\infty}$ does not in general imply that  $[\Phi^{0-}_{T}(\xi)](0)$ is measurable with respect  to $\X_{\infty}$, so this last condition has to be added to the definition of an exploration process associated to a CFTP time.
\end{remark}

\subsection{Positive rates property, $\epsilon$ and $\kappa$}\label{ss:pos-rates-prop}

The positive rates property for  the set of non-perturbative rules $(\RR_i)_{i \in \I^u}$ means that, for every $v \in \S$, there exists a rule with index in $\I^u$ which is of the form $(A,f,r)$ with $r>0$, $A=\emptyset$ and $f \equiv v$. We denote by $\iota_v$ the\footnote{One may assume without loss of generality that, for any $v$,  there is a single such rule, since identical rules with distinct indices may always be merged into a single rule.} index of such a rule. We then control the smallness of the perturbation of  $(\RR_i)_{i \in \I^u}$ by $(\RR_i)_{i \in \I^p}$ through the following two parameters 
\begin{equation}\label{e:def-epsilon}\epsilon := \sup_{v \in \S}    \left( \sum_{j \in \I^p  ;    \   v \in f_j(A_j)}   r_j    \right)    (r_{\iota_v})^{-1},\end{equation}
\begin{equation}\label{e:def-kappa}  \kappa:=  \left( \sum_{i \in \I^p}  | A_i | r_i  \right)   \left(  \sum_{i \in \I} r_i    \right)^{-1}.  \end{equation}

\subsection{Construction of a coupling time with ambiguities $(T,H)$ from $T^u$}

We now define what we call the exploration process with locking of perturbative ambiguities attached to the perturbed dynamics, and associated to the map $\theta$  used to define the exploration process $\X^u$ of the unperturbed dynamics. This is the process we use to define a coupling time with ambiguities for the perturbed dynamics. 

Informally, the construction can be described as follows: run the exploration process associated with the unperturbed dynamics on $\Nsp$ (instead of $\Nsp^u$).   When an $\alpha=(x,i,t)$ corresponding to a perturbative rule, i.e. $i \in \I^p$ is encountered, split the exploration process into $|f(A_i)|$ exploration processes evolving in parallel, one for each $v \in f(A_i)$, in which $(x,i,t)$ is replaced by $(x, \iota_v, t)$. 

The formal construction uses  a recursively constructed tree $\TT$ to label the process. Let us start with the root of $\TT$, denoted $r$, for which we set $\XX_r := \emptyset $ and $\gamma_r:=0$. Then, for any vertex $a \in \TT$, we set  $B_a := \theta(\XX_a)$. Then, if $B_a \neq \emptyset$, 
denote $\n(B_a,\gamma_a)=(x_a, i_a, t_a)$. If $i_a \in \I^u$, we attach a single son $b$ to $a$, and 
let $\XX_{b}:=  \XX_a \cup \{ (x_a, i_a, t_a) \}$ and $\gamma_{b}:=t_a$. On the other hand, if $i_a \in \I^p$, we attach to $a$ a list of sons $\left(b_v,    \  v \in f_{i_a}(\S^{A_{i_a}}) \right)$, and let  
 $\XX_{b_v}:=  \XX_a \cup \{ (x_a, \iota_v,t_a) \}$ and $\gamma_{b_v}:=t_a$. If $B_a = \emptyset$, then $a$ has no son. Finally, we let $\XX_{\infty} := \bigcup_{a \in \TT} \XX_a$.

We now define  $T$ by \begin{equation}\label{d:def-T}T := \inf \{ t ;    \    (x,i,t) \in    \XX_{\infty}    \} \mbox{ if } |  \XX_{\infty}  | < +\infty,\end{equation}
while $T:=-\infty$ if  $|  \XX_{\infty}  | = +\infty$, 
and $H$ is defined by 
\begin{equation}\label{d:def-H}H := \{  (x_a, i_a, t_a);    \       a \in \TT', \  i_a \in \I^p   \},\end{equation}
where $\TT'$ denotes the subtree of $\TT$ obtained by removing the leaves of $\TT$. 
Note that one may view $\XX_{\infty}$ as the result of the exploration process associated to a certain map $\hat{\theta}$. However, the representation with a labelled tree turns out to be more convenient  for our purposes.

\begin{remark}
The definition of $\XX_{\infty}$ makes sense whether or not $| \X^u_{\infty} |$ has an exponentially decaying tail, as assumed in the statement of Theorem \ref{t:theoreme-principal}: provided that $|\XX_{\infty}|$ is a.s. finite, $(T,H)$ is indeed a CFTP time with ambiguities (see Proposition \ref{p:base-perturb}), and the role of the tail condition on $| \X^u_{\infty} |$ in Theorem \ref{t:theoreme-principal} is indeed to ensure that $|\XX_{\infty}|$ is a.s. finite.
\end{remark}

\section{Additional results}\label{s:complete-results}

The first result shows that the exponential moments of $T^*$ can be controlled in terms of similar moments for $T$ and $H$.
 
For $\lambda \in  \R$, define
\begin{equation}\label{e:def-Laplace-temps}\Lambda_T(\lambda) := \E(\exp(\lambda T)), \ \Lambda_{H,time}(\lambda) := \E \left( \sum_{(x,i,t) \in H} |A_i| \exp(\lambda t)    \right).\end{equation}
\begin{theorem}\label{t:temps-exp}
Assume that $\lambda \le 0$ is such that   $ \Lambda_T(\lambda) < +\infty$ and  $\Lambda_{H,time}(\lambda) < 1$. Then
$$\E(\exp(\lambda T^*)) \leq \Lambda_T(\lambda) (1 - \Lambda_{H,time}(\lambda))^{-1}.$$ 
\end{theorem}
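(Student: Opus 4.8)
The plan is to exploit the recursive structure of the construction of $T^*$, exactly as one does for branching processes, and to control exponential moments layer by layer along the sets $\Amb_n$. Recall that $T^* = \inf_{(x,t)\in\Amb_\infty} t + T\circ\tau_{x,t}$ (with $T^*=-\infty$ when $|\Amb_\infty|=+\infty$, which is consistent with $\exp(\lambda T^*)=0$ for $\lambda<0$ and with $\exp(0\cdot T^*)$ needing separate, trivial treatment). Since $\lambda\le 0$, we have $\exp(\lambda T^*) = \sup_{(x,t)\in\Amb_\infty}\exp(\lambda(t+T\circ\tau_{x,t})) \le \sum_{(x,t)\in\Amb_\infty}\exp(\lambda(t+T\circ\tau_{x,t}))$, and this sum decomposes over the layers as $\sum_{n\ge 0}\sum_{(x,t)\in\Amb_n}\exp(\lambda t)\exp(\lambda\,T\circ\tau_{x,t})$. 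So the whole proof reduces to showing
\[
\E\Big(\sum_{(x,t)\in\Amb_n}\exp(\lambda t)\,\exp(\lambda\,T\circ\tau_{x,t})\Big) \le \Lambda_T(\lambda)\,\Lambda_{H,time}(\lambda)^n,
\]
after which summing the geometric series in $n$ (legitimate because $\Lambda_{H,time}(\lambda)<1$) yields the stated bound $\Lambda_T(\lambda)(1-\Lambda_{H,time}(\lambda))^{-1}$.

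The key step is the inductive passage from layer $n$ to layer $n+1$. Define $S_n := \E\big(\sum_{(x,t)\in\Amb_n}\exp(\lambda t)\big)$ as an auxiliary quantity; a first, simpler induction using the defining recursion \eqref{e:rec-amb} together with the stopping property of $H$ (so that, conditionally on $\F_t$, the law of $\tau_{x,t}^{-1}\circ H\circ\tau_{x,t}$ and of the associated time-shifts can be handled via the strong Markov / stationarity property of the Poisson process) should give $S_{n+1}\le \Lambda_{H,time}(\lambda)\,S_n$, hence $S_n\le \Lambda_{H,time}(\lambda)^n$ since $S_0 = 1$. Indeed, for each $(x,t)\in\Amb_n$, the children it contributes to $\Amb_{n+1}$ are the points $(z+y,s)$ for $(z,i,s)\in\tau_{x,t}^{-1}\circ H\circ\tau_{x,t}$ and $y\in A_i$, and $\sum_{y\in A_i}\exp(\lambda s) = |A_i|\exp(\lambda s)$; taking conditional expectation over the randomness of $H\circ\tau_{x,t}$ relative to the $\sigma$-field generated by $\Amb_n$ and the Poisson points above time $t$, and using space-time stationarity of $\Nsp$, one gets a factor $\E(\sum_{(x,i,s)\in H}|A_i|\exp(\lambda s)) = \Lambda_{H,time}(\lambda)$ times $\exp(\lambda t)$. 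Then, to incorporate the extra factor $\exp(\lambda\,T\circ\tau_{x,t})$ present in $S_n$ but not in the cruder $S_n$ — wait, that is in the displayed target, not in $S_n$ — one argues as follows: conditionally on everything determining $\Amb_n$, the time-shifted copies $T\circ\tau_{x,t}$ for the finitely many $(x,t)\in\Amb_n$ each have the same law as $T$, and the factor $\exp(\lambda t)$ is already accounted for by $S_n$; bounding each $\exp(\lambda\,T\circ\tau_{x,t})$ in conditional expectation by $\Lambda_T(\lambda)$ and pulling it out gives $\E(\sum_{(x,t)\in\Amb_n}\exp(\lambda t)\exp(\lambda T\circ\tau_{x,t})) \le \Lambda_T(\lambda)\,S_n \le \Lambda_T(\lambda)\Lambda_{H,time}(\lambda)^n$, as required.

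The main obstacle is the measurability/independence bookkeeping: the shifted configurations of $\Nsp$ seen by the different space-time points in $\Amb_n$ are genuinely dependent, so one cannot simply multiply independent copies. The right way around this is precisely the first-moment philosophy the authors advertise: one never needs independence, only the tower property of conditional expectation plus the stopping property of $H$ and the stationarity of the Poisson process, applied point by point within each layer, and the fact that $\Amb_n$ is determined by data strictly above the relevant times so that the freshly revealed points (those used to define $H\circ\tau_{x,t}$ and the time-shift $T\circ\tau_{x,t}$) are conditionally Poisson with the correct intensity. One should also take a little care that $\Amb_n$ is a.s. finite for each fixed $n$ (immediate by induction, since $H$ is a.s. finite on $\{T>-\infty\}$ and each $A_i$ is finite) so that all the sums are genuinely countable and Tonelli applies; and the $\lambda=0$ borderline of the hypothesis is vacuous since $\Lambda_{H,time}(0)=\g$ would need $\g<1$, which is exactly Theorem \ref{t:finitude}'s hypothesis, and there $\exp(0\cdot T^*)=1=\Lambda_T(0)(1-\g)^{-1}\cdot(1-\g)$ — so the interesting content is $\lambda<0$.
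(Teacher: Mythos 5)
Your argument is correct and is essentially the paper's own proof: the same bound $\exp(\lambda T^*)\le\sum_{(x,t)\in\Amb_\infty}e^{\lambda(t+T\circ\tau_{x,t})}$, the same layer-wise first-moment estimate $\E\bigl(\sum_{(x,t)\in\Amb_n}e^{\lambda(t+T\circ\tau_{x,t})}\bigr)\le\Lambda_T(\lambda)\,\Lambda_{H,time}(\lambda)^n$ (which the paper obtains in one stroke from Corollary \ref{c:branch-ambig} with $f(x,t)=e^{\lambda t}$ and $D=e^{\lambda T}$, the conditioning/stationarity bookkeeping you gesture at being formalized there via the refined Campbell theorem and the stopping property of the sets $\Amb_n$), and the same geometric series. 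The only slip is cosmetic: on $\{T^*=-\infty\}$ one has $e^{\lambda T^*}=+\infty$ rather than $0$ for $\lambda<0$, but this event is null because $e^{\lambda t}\ge 1$ for $t\le 0$ gives $\g\le\Lambda_{H,time}(\lambda)<1$, so Theorem \ref{t:finitude} applies.
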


Our next result deals with the range of space-dependence of $T^*$. To formalize this notion,  say that a $\N \cup \{ + \infty  \}$-valued random variable $L$ defines a stopping box in $\Z^d$ if, for any $a \in \N$, one has $\{L = a \} \in \F^{-a,a}$. We say that an a.s. finite such random variable bounds the width of a CFTP time $T$ if, on $\{ T > -\infty \}$,  the  value of $[\Phi_{T}^{0-}(\xi)](0)$ (which by definition 
does not depend on the choice of $\xi \in \S^{\Z^d}$) is measurable with respect to $\F^{-L,L}$. We say that $L$ 
bounds the width of a CFTP time with ambiguities if $H$ is measurable with respect to $\F^{-L, L}$
and if there exists a measurable map\footnote{To be more specific about measurability assumptions concerning $(e(\alpha, T, \xi))_{\alpha \in H}$, we assume that it is encoded as the random subset of $\Z^d \times \I \times \R \times \S$ defined by $\bigcup _{\alpha \in H}   (\alpha, e(\alpha, T, \xi))$, where the $\sigma-$algebra on the set of locally finite subsets of  $\Z^d \times \I \times \R \times \S$ is generated by maps of the form $\varpi \to  | \varpi \cap  B|$, where $B$ is a  Borel subset of $\Z^d \times \I \times \R \times \S$.} $\Theta$ such that, on $\{  T > -\infty \}$, for all $\xi \in \S^{\Z^d}$, $[\Phi_{T}^{0-}(\xi)](0) = \Theta\left(L,  \Nsp^{-L, L} ,  (e(\alpha, T, \xi))_{\alpha \in H} \right) $.

For $\lambda \in \R$, and $1 \leq q \leq d$, define
\begin{equation}\label{e:def-Laplace-esp}\Lambda_{L}(\lambda) := \E(e^{\lambda L}), \ \Lambda_{H,space}(\lambda,q) := \E \left( \sum_{(x,t,i) \in H} \sum_{z \in A_i}  e^{\lambda (x_q+z_q)}    \right).\end{equation}
Then define $L^*_+$ by $L^*_+ := \sup_{1 \leq q \leq d} \sup_{ (x,t) \in \Amb_{\infty} } x_q + L \circ \tau_{x,t}$, and $L^*_-$ by $L^*_- := \inf_{1 \leq q \leq d} \inf_{ (x,t) \in \Amb_{\infty} } x_q - L \circ \tau_{x,t}$. Finally, let $L^* := \max(L^+, -L^-)$.
\begin{theorem}\label{t:espace-exp}
If $\g<1$, and $L$ bounds the width of $(T,H)$ then $L^*$ bounds the width of $T^*$. Moreover, if $\lambda > 0$ is such that $ \Lambda_{L}(\lambda)  < +\infty$ and $\Lambda_{H,space}( \pm \lambda,q) < 1$ for all $q$,  then 
$$\E(\exp(\lambda L_+^*)) \leq \Lambda_{L}(\lambda)  \sup_{1 \leq q \leq d} (1 - \Lambda_{H,space}(\lambda,q))^{-1},$$
$$\E(\exp(-\lambda L_-^*)) \leq \Lambda_{L}(\lambda)   \sup_{1 \leq q \leq d}  (1 - \Lambda_{H,space}(-\lambda,q))^{-1}.$$
\end{theorem}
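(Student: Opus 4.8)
The theorem has two parts: a measurability/structural claim ("$L^*$ bounds the width of $T^*$") and two quantitative exponential-moment bounds. I would treat them separately, exactly parallel to how Theorem~\ref{t:temps-exp} handles the time coordinate, since the space coordinate and the time coordinate play structurally identical roles in the recursion~\eqref{e:rec-amb} defining $\Amb_\infty$.

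\textbf{Step 1: the structural claim.} I would first establish, by induction on $n$, that each $\Amb_n$ is contained in the box $[-L^*_n, L^*_n]^d \times \R$ for an appropriate $\F$-measurable $L^*_n$, and that the resolution of ambiguities can be carried out using only information in $\Nsp^{-L^*_n, L^*_n}$. Concretely, one unwinds the definition of $T^*$ given in Section~\ref{s:definitions}: $[\Phi^{0-}_{T^*}(\xi)](0)$ is determined by running, at each $(x,t) \in \Amb_\infty$, the map $\Theta$ supplied by the hypothesis "$L$ bounds the width of $(T,H)$", fed with the shifted data $\Nsp^{-L,L} \circ \tau_{x,t}$ and the values $e(\alpha, T\circ\tau_{x,t}, \cdot)$, the latter being resolved inductively from points further in the past. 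Since $L$ bounds the width of $(T,H)$, the point $\alpha = (x_a, i_a, t_a) \in H\circ\tau_{x,t}$ lies in $[x - L\circ\tau_{x,t}, x + L\circ\tau_{x,t}]^d$ in each coordinate, and then $\Amb_{n+1}$ adds the points $z + y$ with $y \in A_i$; tracking the worst case over coordinates $1 \le q \le d$ gives exactly the definitions of $L^*_+$ and $L^*_-$, and $L^* = \max(L^*_+, -L^*_-)$. The stopping-box property of $L^*$, i.e. $\{L^* = a\} \in \F^{-a,a}$, follows because $\Amb_\infty$ is built from $H$ (which has the stopping property) and $L$ (which defines a stopping box), so that the event is expressible in terms of $\Nsp^{-a,a}$; here I would invoke the convention of the paper to mention measurability only when nontrivial, and keep this brief. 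The upshot: on $\{T^* > -\infty\}$, the value $[\Phi^{0-}_{T^*}(\xi)](0)$ is measurable with respect to $\F^{-L^*, L^*}$, and one exhibits the required map $\Theta^*$ by composing the step-by-step resolution described in Proposition~\ref{p:base-ambig}.

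\textbf{Step 2: the exponential-moment bounds.} For the bound on $\E(\exp(\lambda L^*_+))$, I would run the same first-moment / branching argument used for Theorem~\ref{t:temps-exp}, but with the additive functional $x \mapsto x_q$ (coordinate $q$) in place of the time coordinate. Fix $q$. Writing $\Amb_\infty = \bigcup_n \Amb_n$ and using subadditivity of $\exp(\lambda \cdot)$ over the union bound together with $\exp(\lambda \sup) \le \sum \exp(\lambda \cdot)$, one gets
\[
\E\Bigl(\exp\bigl(\lambda \sup_{(x,t)\in\Amb_\infty} (x_q + L\circ\tau_{x,t})\bigr)\Bigr) \le \sum_{n\ge 0} \E\Bigl(\sum_{(x,t)\in\Amb_n} \exp(\lambda x_q)\,\E\bigl[\exp(\lambda L)\bigr]\Bigr),
\]
where the inner conditional expectation factors by the stopping property of $H$ and the translation-invariance of $\P$ (the shifted copy of $L$ at $(x,t)$ is independent of $\F_t$ when conditioned appropriately — this is the analogue of the step in the proof of Theorem~\ref{t:temps-exp}, and I would lean on it rather than reprove it). The recursion~\eqref{e:rec-amb} then yields, by one more application of the stopping property and translation invariance,
\[
\E\Bigl(\sum_{(x,t)\in\Amb_{n+1}} \exp(\lambda x_q)\Bigr) \le \Lambda_{H,space}(\lambda, q)\,\E\Bigl(\sum_{(x,t)\in\Amb_n} \exp(\lambda x_q)\Bigr),
\]
so that by induction $\E(\sum_{(x,t)\in\Amb_n} \exp(\lambda x_q)) \le \Lambda_{H,space}(\lambda,q)^n$ (the $n=0$ term being $1$, since $\Amb_0 = \{(0,0)\}$). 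Summing the geometric series, which converges because $\Lambda_{H,space}(\lambda,q) < 1$, and multiplying by $\Lambda_L(\lambda)$, then taking the sup over $q$, gives the first displayed inequality. The bound on $\E(\exp(-\lambda L^*_-))$ is identical with $\lambda$ replaced by $-\lambda$ throughout (note $-L^*_- = \sup_q \sup_{(x,t)} (-x_q + L\circ\tau_{x,t})$, so the same manipulation applies with the functional $x \mapsto -x_q$, producing $\Lambda_{H,space}(-\lambda,q)$).

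\textbf{Main obstacle.} The quantitative part is essentially a transcription of the argument for Theorem~\ref{t:temps-exp} and should go through with only cosmetic changes, since $\g < 1$ guarantees $|\Amb_\infty| < \infty$ a.s.\ (Theorem~\ref{t:finitude}) so the geometric sums are legitimate. The genuinely delicate point is Step~1: one must verify that the various shifted copies of the resolution map $\Theta$ can be consistently composed along the (a.s.\ finite) tree of ambiguities so that the final output at the origin depends measurably only on $\Nsp^{-L^*, L^*}$ — in particular that the intermediate values $e(\alpha, T\circ\tau_{x,t}, \cdot)$ needed at one node are themselves produced, further down in the recursion, using only data inside the same box. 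This is exactly the content of the step-by-step resolution procedure of Proposition~\ref{p:base-ambig}, and the cleanest route is to invoke that proposition and simply track how the spatial window $[-L,L]^d$ propagates under~\eqref{e:rec-amb}, rather than re-deriving the resolution from scratch. I would also need to check the stopping-box property $\{L^* = a\} \in \F^{-a,a}$ carefully, as it is the one place where a non-trivial measurability argument is required, but it reduces to the stopping properties already assumed for $H$ and $L$.
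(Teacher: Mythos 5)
Your proposal is correct and follows essentially the same route as the paper: the exponential-moment bounds are obtained by rerunning the first-moment argument of Theorem~\ref{t:temps-exp} (i.e.\ Proposition~\ref{p:branch-ambig} and Corollary~\ref{c:branch-ambig} with $f(x,t)=e^{\pm\lambda x_q}$ and $D=e^{\lambda L}$, then summing the geometric series), and the structural claim is obtained by adapting the induction of Proposition~\ref{p:base-ambig} while tracking how the window $[-L,L]^d$ propagates along the recursion \eqref{e:rec-amb}. The only cosmetic difference is that the paper's induction for the width claim runs on $|\Amb_\infty|$ rather than on the generation index $n$, which does not affect the substance.
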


Finally, we have the following extension of Theorem \ref{t:theoreme-principal}.

\begin{theorem}[Extension of Theorem \ref{t:theoreme-principal}]\label{t:theoreme-principal-extension}
Under the assumptions of Theorem \ref{t:theoreme-principal},  for any list of perturbative rules with small enough $\epsilon$ and $\kappa$, the pair $(T,H)$ defined by \eqref{d:def-T} and \eqref{d:def-H} defines a  CFTP time with ambiguities that satisfies the assumptions of Theorem \ref{t:finitude}, together with the assumptions of  Theorems \ref{t:temps-exp} and \ref{t:espace-exp} for small enough $|\lambda|$.
\end{theorem}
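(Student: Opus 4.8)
The plan is to produce, for a perturbation with small enough parameters $\epsilon$ and $\kappa$, quantitative control on the three key functionals $\g$, $\Lambda_{H,time}(\lambda)$ and $\Lambda_{H,space}(\pm\lambda,q)$ attached to the pair $(T,H)$ built in Section \ref{s:definitions} from $T^u$. By Proposition \ref{p:base-perturb} the pair $(T,H)$ is a CFTP time with ambiguities as soon as $|\XX_\infty|$ is a.s.\ finite, so the content of the theorem is reduced to verifying the numerical hypotheses of Theorems \ref{t:finitude}, \ref{t:temps-exp}, \ref{t:espace-exp}. First I would fix notation for the labelled tree $\TT$ and recall that $H=\{(x_a,i_a,t_a)\ :\ a\in\TT',\ i_a\in\I^p\}$, so that every sum over $H$ is a sum over the perturbative vertices of $\TT'$. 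The central idea is a first-moment/branching-process comparison: a vertex $a$ of $\TT$ carries a perturbative label precisely when the point $\n(B_a,\gamma_a)$ produced by the exploration belongs to $\I^p$, and conditionally on the past of the exploration this happens with probability at most $\kappa$ in the appropriate ``time-weighted'' sense — more precisely, the probability that the next point found in a given space-time region lies in $\I^p$, weighted by $|A_i|$, is governed by $\kappa$ through \eqref{e:def-kappa}, while the branching factor of $\TT$ is $\le|\S|$ at each perturbative vertex. Combining this, one bounds $\g=\E\big(\sum_{(x,i,t)\in H}|A_i|\big)$ by comparison with a subcritical Galton--Watson-type recursion whose mean is $O(\kappa)$ times a constant depending only on $|\S|$ and on the (linearly growing) bound $\beta$ from \eqref{e:borne-taille} applied along the exploration; hence $\g<1$ for $\kappa$ small.

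For the exponential-moment statements I would exploit the hypothesis that $|\X^u_\infty|$ is exponentially integrable. The exploration process $\X^u$ has $T^u=\gamma_\infty^u=\inf\{t:(x,i,t)\in\X^u_\infty\}$, and since the successive time-increments $\gamma_n-\gamma_{n+1}$ are stochastically dominated by independent exponentials with rate $\sum_{i\in\I}r_i$ over regions of size controlled by $\beta(\ell)=O(\ell)$, an exponentially integrable $|\X^u_\infty|$ yields an exponentially integrable $-T^u$, and likewise an exponentially integrable spatial width $L$ via \eqref{e:borne-taille}. Now the perturbed exploration $\XX$ differs from $\X^u$ only by the branching at perturbative vertices; each ``perturbative branch'' restarts an exploration of the same type (with the substituted point $(x,\iota_v,t)$ playing the role of a fresh deterministic input), so $|\XX_\infty|$, $-T$ and the width of $(T,H)$ are each dominated by a sum, indexed by the vertices of the branching tree $\TT$, of i.i.d.\ copies of the corresponding unperturbed quantity. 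Writing $N$ for the total size of $\TT$, one has $\TT$ dominated by a Galton--Watson tree with offspring mean $O(\kappa)$ (bounded by $|\S|\kappa$), hence $N$ is exponentially integrable once $\kappa$ is small; a Wald/exponential-moment computation for a randomly-stopped sum of exponentially integrable i.i.d.\ terms then gives that $\Lambda_T(\lambda)<\infty$ and $\Lambda_L(\lambda)<\infty$ for $|\lambda|$ small. The same book-keeping, now tracking the weight $|A_i|e^{\lambda t}$ (resp.\ $\sum_{z\in A_i}e^{\lambda(x_q+z_q)}$) carried by each perturbative vertex and using that a perturbative vertex appears with time-weighted probability $\le\epsilon$ (through \eqref{e:def-epsilon}, controlling how often a perturbative rule competes with the guaranteed rule $\iota_v$) and $\le\kappa$ in the $|A_i|$-weighted sense, shows $\Lambda_{H,time}(\lambda)\to 0$ and $\Lambda_{H,space}(\pm\lambda,q)\to 0$ as $\epsilon,\kappa\to 0$ for $|\lambda|$ small, in particular each is $<1$.

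The execution order I would follow is: (1) record the domination of $\TT$ by a subcritical Galton--Watson tree with mean $\le c(|\S|)\kappa$, and of each subtree-contribution by an independent copy of the unperturbed exploration started from a deterministic point; (2) deduce $\g\le c(|\S|)\kappa\cdot\E(|A|\text{-weighted unperturbed exploration size})<1$ for small $\kappa$, giving the hypothesis of Theorem \ref{t:finitude}; (3) transfer exponential integrability of $|\X^u_\infty|$ to exponential integrability of $-T^u$ and of the unperturbed width, using the $O(\ell)$ growth of $\beta$ and exponential time-increments; (4) via a randomly-stopped-sum estimate, bound $\Lambda_T(\lambda)$, $\Lambda_L(\lambda)$ for $|\lambda|$ small; (5) bound $\Lambda_{H,time}(\lambda)$ and $\Lambda_{H,space}(\pm\lambda,q)$ and check they are $<1$ for $\epsilon,\kappa$ and $|\lambda|$ small, invoking Theorems \ref{t:temps-exp} and \ref{t:espace-exp}. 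The main obstacle I anticipate is step (3)–(4): one must be careful that the i.i.d.\ domination of the per-vertex contributions is genuinely valid despite the dependence between the shape of $\TT$ and the explorations hanging from its vertices — this is exactly where the ``first-moment arguments allow us to largely ignore these dependencies'' philosophy announced in the introduction does the work, by passing to expectations (and to exponential moments via the independence built into the Poisson structure of $\Nsp$ across disjoint space-time regions) rather than to almost-sure statements. Making the randomly-stopped exponential-moment bound quantitative enough that the resulting thresholds on $\epsilon$, $\kappa$ and $|\lambda|$ are uniform and explicit is the one genuinely delicate computation; everything else is a routine translation of the branching picture into the three inequalities demanded by the cited theorems.
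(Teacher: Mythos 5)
Your proposal has the right skeleton (reduce to Proposition \ref{p:base-perturb} plus verification of the numerical hypotheses of Theorems \ref{t:finitude}, \ref{t:temps-exp}, \ref{t:espace-exp}, with $\kappa$ driving $\g$ and $\Lambda_{H,\cdot}$ and the perturbative branching driving the tree), but the mechanism you propose for the exponential-moment bounds has a genuine gap. The distinct branches hanging from a perturbative vertex of $\TT$ are \emph{not} independent and are not supported on disjoint space-time regions: the sons $b_v$ all restart the exploration from the same time $t_a$ and read overlapping portions of the \emph{same} Poisson process $\Nsp$, so the "sum of i.i.d.\ copies of the unperturbed quantity indexed by the vertices of $\TT$" and the ensuing Wald-type computation for a randomly stopped sum are not justified (and you cannot fix this by "passing to expectations'', since an exponential moment of a random sum is not a first moment). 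The paper avoids this entirely: it first \emph{linearizes} the exponential moments --- $e^{\lambda T}\le\sum_{a\in\TT'}e^{\lambda t_a}$ for $\lambda<0$, and $L=\beta(R)$ with $R\le|\TT|$ --- and then controls the resulting linear functionals of the tree by an \emph{exact} generation-by-generation first-moment identity (Proposition \ref{p:calcul-branchant}), proved with the refined Campbell theorem, which requires no independence between branches. The comparison with the unperturbed exploration is then a pointwise kernel inequality $d\hat K\le(1+\epsilon)\,dK$ (equation \eqref{e:compare-densite}), yielding $\E|\TT_\ell|\le(1+\epsilon)^\ell\,\P^u(|\X^u_\infty|\ge\ell)$ and its weighted analogues (Lemmas \ref{l:estimation-1}--\ref{l:estimation-6}).

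A second, related inaccuracy: you attribute the subcriticality of the tree to $\kappa$ ("$\TT$ dominated by a Galton--Watson tree with offspring mean $O(\kappa)$''). For the paper's tree $\TT$ this is false --- every non-terminated vertex has at least one son, so the offspring mean is close to $1$ regardless of $\kappa$ --- and even for a coarse-grained tree of perturbative branchings the offspring mean involves the expected run length, whose control again requires the $(1+\epsilon)$ kernel comparison. In the paper, finiteness of $\E|\TT|$, $\Lambda_T$ and $\Lambda_L$ comes from the competition between the factor $(1+\epsilon)^\ell$ and the exponential tail $\P^u(|\X^u_\infty|\ge\ell)\le Ce^{-\mu\ell}$ (i.e.\ from $\epsilon$ being small relative to $\mu$, plus a Cauchy--Schwarz step bounding $t_{\ell-1}$ by a sum of $\ell$ independent exponentials), while $\kappa$ only enters to make $\g$, $\Lambda_{H,time}$ and $\Lambda_{H,space}$ less than $1$. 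You would need to replace your steps (1) and (4) by the Campbell-theorem identities and the linearization just described for the argument to close.
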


For the sake of readability, we did not include explicit estimates in the statement of Theorem \ref{t:theoreme-principal-extension}. However, looking at the proofs given in Section \ref{s:preuve-2}, it is easy to obtain explicit control 
upon the characteristics of $(T,H)$ (namely, $\g$,  $\Lambda_T$, $\Lambda_{H, time}$, $\Lambda_L$, $\Lambda_{H, space}$) in terms of $\epsilon, \kappa$, and the parameters $(\RR_i)_{i \in \I^u}$ of the unperturbed model. Combined with Theorems \ref{t:finitude}, \ref{t:temps-exp} and \ref{t:espace-exp}, this leads to an explicit control upon the characteristics of the resulting CFTP time $T^*$.

\section{Applications}\label{s:applications}

In this section, we give some examples of dynamics which satisfy the properties required for the unperturbed dynamics in Theorem \ref{t:theoreme-principal}. Since we discuss unperturbed dynamics only, it is unnecessary to use $^u$ superscripts to distinguish between perturbed and unperturbed dynamics, and consequently such superscripts are not used in this section.

\subsection{Perturbations of finite factor models}

We say that the dynamics  possess the finite factor property if there exists $b \in \N$ such that, for all $t < 0$, 
$\left[\Phi^{0-}_{t}(\xi)\right] $  is measurable with respect to $\Nsp^{-b,b}_t$ and $\left(\xi(x);   \   x \in \{-b, \ldots, b\}^d \right)$. 

\begin{prop}\label{p:finite-factor} Any dynamics with the finite factor property and the positive rates property satisfies the assumptions of Theorem \ref{t:theoreme-principal}, i.e. there exists a CFTP time associated with an exploration process whose total size has some finite exponential moment.
\end{prop}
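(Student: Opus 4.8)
The plan is to construct an explicit exploration process for a finite factor dynamics and show its total number of points has a geometric-type tail, hence a finite exponential moment. The positive rates property is what makes coalescence happen; the finite factor property is what keeps the exploration spatially confined.

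First I would fix the integer $b$ from the finite factor property and set up the exploration map $\theta$ so that, at each step, $\theta(\X_\ell)$ is the box $\{-b,\ldots,b\}^d$ (translated appropriately) around the site whose state we are currently trying to pin down — in effect we uncover, one Poisson point at a time going backwards, all the points of $\Nsp$ in a spatially bounded ``tube'' $\{-b,\ldots,b\}^d \times (-\infty,0]$, which is legitimate because the bound \eqref{e:borne-taille} holds with $\beta \equiv b$ (constant, hence $O(\ell)$). Concretely, $\theta$ should return $\emptyset$ as soon as the points uncovered so far already determine $[\Phi^{0-}_{t}(\xi)](0)$ for all $\xi$ — and by the finite factor property this determination depends only on $\Nsp^{-b,b}_t$ and on $(\xi(x))_{x\in\{-b,\ldots,b\}^d}$, so the relevant stopping criterion is: stop at the first (most recent-going-backward) time $t$ at which, scanning the uncovered points of $\Nsp^{-b,b}$ in the tube from $0$ down to $t$, the value at site $0$ has become independent of the initial condition restricted to the box. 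I would verify that this $\theta$ is measurable, that the resulting $T$ is a genuine CFTP time (the value $[\Phi^{0-}_{T}(\xi)](0)$ is by construction a function of the uncovered points, i.e.\ $\X_\infty$-measurable), and that on $\{|\X_\infty|<\infty\}$ one has $T=\gamma_\infty$, so that $T$ is indeed associated with this exploration process in the sense of Section~\ref{s:definitions}.

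The core estimate is that $T$ — equivalently $|\X_\infty|$ — has an exponentially decaying tail. Here the positive rates property does the work: for each $v\in\S$ there is a rule $\iota_v$ with empty dependence set and positive rate $r_{\iota_v}$, whose application unconditionally sets the target site to $v$. Consider a fixed site $x$ in the tube $\{-b,\ldots,b\}^d$; going backward from any time, after an exponential waiting time the next Poisson mark at $x$ is a $\iota_v$-type mark (for some $v$) with probability bounded below by $p_0 := (\min_v r_{\iota_v})/(\sum_{i\in\I} r_i) > 0$, independently of everything older. Once every site of the tube has, reading backward, encountered such a ``reset'' mark more recently than its last-but-one dependence on older data — more carefully: once there is a time $t$ below which, within each vertical line $\{x\}\times\cdot$ of the tube, the most recent mark seen so far going down is a reset mark, the configuration on the whole box at that time is deterministic, hence the value at site $0$ at time $0$ is determined and $\theta$ returns $\emptyset$. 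I would make this quantitative by a standard renewal/geometric argument: partition $(-\infty,0]$ into successive random time-windows, in each of which the probability that \emph{every} one of the $(2b+1)^d$ sites of the tube receives a reset mark is bounded below by a constant $c>0$ depending only on $b$, $|\S|$ and the rates; the first such successful window gives an upper bound on $-T$ that is stochastically dominated by a geometric number of (exponentially-tailed) window lengths, which is exponentially integrable. Translating back, $|\X_\infty|$ is dominated by the number of Poisson points of $\Nsp$ in the tube above level $T$, which — conditioning on $T$ — is Poisson with parameter $(2b+1)^d(\sum_i r_i)\cdot(-T)$; a routine computation then gives $\E(e^{\mu|\X_\infty|})<\infty$ for $\mu$ small enough.

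The main obstacle I expect is the bookkeeping in the last step: making precise the claim that ``the uncovered points determine the value at site $0$'' in a way that is simultaneously (a) a legitimate measurable stopping rule for $\theta$ depending only on $\X_\ell$, (b) compatible with the one-point-at-a-time backward exploration mechanism (which reveals $\n(B_n,\gamma_n)$, the single most recent unrevealed point in the current box), and (c) tight enough that the geometric argument applies. In particular one must be careful that the stopping criterion references only the \emph{already revealed} points, not the full $\Nsp^{-b,b}_t$; the finite factor property guarantees that the true flow $\Phi^{0-}_t$ restricted to the box is a function of $\Nsp^{-b,b}_t$ and the box-restricted initial data, and one needs the monotone fact that once the revealed points pin down a deterministic value at site $0$ (for all box initial conditions), revealing more points cannot un-pin it — so the exploration can legitimately halt. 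Handling the coupling across different initial conditions $\xi$ cleanly (it suffices to track, for each site in the box and each candidate state, whether that state is still ``reachable'') is where I would be most careful, but it is conceptually routine once the reset-mark mechanism is in place.
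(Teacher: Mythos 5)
Your construction is essentially the paper's: explore the whole tube $\{-b,\ldots,b\}^d\times(-\infty,0]$ backward (so $\beta\equiv b$), use the positive-rates ``reset'' marks to make the box configuration deterministic at some level, get a geometric tail, and finish with the finite factor property. The paper resolves the bookkeeping you flag as your main obstacle by choosing a cruder but completely concrete stopping rule: $\theta(X)=\emptyset$ exactly when the $h:=(2b+1)^d$ \emph{oldest} explored points consist of one point per site of the box, all attached to rules with $A_i=\emptyset$; this makes the criterion manifestly measurable and a function of $X$ alone, and yields the tail bound directly on the number of exploration steps (each block of $h$ successive steps succeeds with conditional probability at least $h!\,h^{-h}\rho^h$, where $\rho$ is the proportion of unconditional rates). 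Your last step, deducing exponential integrability of $|\X_\infty|$ from that of $-T$ by saying the point count above level $T$ is ``conditionally on $T$'' Poisson of parameter proportional to $-T$, is not literally correct ($T$ is determined by those very points, so the conditional law is not Poisson); this is easily repaired (e.g.\ by Cauchy--Schwarz, or better by running the geometric argument on the step count rather than on $T$, as the paper does), but as written it is a small flaw rather than a different method.
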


\begin{proof}
Given $X \in \Omega_f$, let $q:=|X|$, and denote by  $(y_k, j_k, s_k)_{0 \leq k \leq q-1}$ the list of elements of $X$, indexed by decreasing order of time, so that $s_0>\cdots > s_{q-1}$. Let also $h:= (2b+1)^d$. Now  consider the exploration process associated with the map $\theta$ defined as follows. Set  $\theta(X) := \emptyset$ when the following three conditions are met 
\begin{itemize}
\item[a)]  $|X| \geq h$,
\item[b)] $\{  y_{q-1},\ldots, y_{q-h}   \} = \{ -b, \ldots, b \}^d $,
\item[c)] for all $q-h \leq k \leq q-1$, $A_{j_k} = \emptyset$,
\end{itemize} 
Otherwise, let $\theta(X) := \{-b, \ldots, b\}^d$.
Denote by $(\X_n)_n$ the corresponding exploration process, and observe that condition \eqref{e:borne-taille} is satisfied with $\beta(\ell) := b$ for all $\ell$.
One checks that given $\X_n$, the probability that $\X_{n+h}$ satisfies $\theta(\X_{n+h}) = \emptyset$ is bounded below by $h ! h^{-h}  \rho^h$, 
where $\rho :=  (\sum_{i \in \I}   r_i \un(A_i = \emptyset)  )(\sum_{i \in \I} r_i)^{-1}$.
This proves the fact that there exists $\mu>0$ such that 
 $\E\left( e^{\mu | \X_{\infty}} | \right) < +\infty$.  
Let us now check that $T:=\gamma_{\infty}$ is  a CFTP time for the dynamics, associated with the exploration process defined by $\theta$.
 Define $U$ to be the a.s. finite smallest index $k$ such that $\theta(\X_k) = \emptyset$. From conditions a) b) c), one has that,  on $\{ U < +\infty \}$, for all $x \in  \{ -b, \ldots, b \}^d$, 
 $[\Phi_{T}^{\gamma_{U-h}-}(\xi)](x)$ takes the same value for every $\xi$, and this value is measurable with respect to $(y_{U}, j_{U}), \ldots, (y_{U-h+1}, j_{U-h+1})$. On the other hand, the fact that $\theta(\X_k) =    \{ -b, \ldots, b \}^d$ for all $k \leq U-h$ shows that $\X_{U-h} = \Nsp^{-b,b}_{\gamma_{U-h}} $. Now by the definition of the flow, one has that
 $$[\Phi_T^{0-}(\xi)](0) =  \left[\Phi^{0-}_{\gamma_{U-h}} \left(\Phi_T^{\gamma_{U-h}-}(\xi)\right) \right](0).$$ The finite factor property then yields that $[\Phi_T^{0-}(\xi)](0)$ is the same whatever the value of $\xi$, and that this value is measurable with respect to $\X_{U}$.
\end{proof}

The simplest example of dynamics with the finite factor property is the case where distinct sites  do not interact, i.e. $A_i \subset \{ 0 \}$ for every $i \in \I$. In this case, the factor property holds with $b=0$, and each site evolves independently according to a continuous-time Markov chain on $\S$.

A more sophisticated example, whose study was our original motivation for this work, is the so-called class of RN+YpR nucleotide substitution models,  see \cite{BerGouPia}, whose goal is to provide tractable models that include neighbor-dependent effects such as the well-known hypermutability of CpG dinucleotides. These models  use  the nucleotidic alphabet $\S := \{  A, C, G, T   \}$ as their state space, and $\Z$ as their set of sites, with $\S^{\Z}$   
being an idealized representation of a DNA sequence. Additionally, $\S$ is divided into the set of pyrimidines $Y:= \{  C, T  \}$, and purines $R := \{  A, G \}$, and we say that $Y$ is the type of $C$ and $T$, while $R$ is the type of $A$ and $G$.  

The RN+YpR dynamics is specified through the following list of rules (each rule is of the form  $(f,A,r)$):
\begin{itemize}
\item unconditional rules: for each $v \in \S$, a rule with $A := \emptyset$, $r>0$ and $f \equiv v$;
\item transversion rules: for each $v \in \S$, a rule with $A: = \{ 0 \}$ and $f(w) := v$ if $v$ and $w$ are not of the same type, $f(w):=w$ otherwise; 
\item transition rules:  for each $v \in \S$, a rule with $A := \{ 0 \}$ and $f(w) := v$ if $v$ and $w$ are of the same type, $f(w):=w$ otherwise; 
\item left-dependent rules: for each $u \in Y$, $v \in R$, $v' \in R$, a rule with $A := \{ -1, 0 \}$, 
$f(w_{-1},w_0):=v'$ if  $(w_{-1},w_0)=(u,v)$,  $f(w_{-1},w_0):=w_0$ otherwise;    
\item right-dependent rules:  for each $u \in Y$, $v \in R$, $u' \in Y$, a rule with $A := \{  0, 1 \}$, 
$f(w_0,w_1):=u'$ if  $(w_0,w_1)=(u,v)$,  $f(w_0,w_1):=w_0$ otherwise.    
\end{itemize}

It turns out (see \cite{BerGouPia}) that RN+YpR models have the finite factor property with $b:=1$. Let us insist that the rates of left- and right-dependent rules, whence the strength of the interaction between sites, may be arbitrarily large, so that the RN+YpR class contains models that are not weakly dependent. Note that one can generalize this class of models to produce interacting particle systems with an arbitrarily long range of dependence, where the minimal $b$ for which the finite factor property holds can be made arbitrarily large, although these seem less biologically motivated. Note also that, in the case of the RN+YpR model, one can define alternative coupling times which, as opposed to the one defined in the proof of Proposition \ref{p:finite-factor}, do not get larger and larger when the interaction strength (given by the rates of the rules involving interactions between neighboring sites) gets large,  see \cite{BerPia}.

\subsection{Perturbations of voter-like models}

We now describe how Theorem \ref{t:theoreme-principal} can be applied to variants of classical interacting particle systems such as the voter model on $\Z^d$ (see \cite{Lig2}).

\subsubsection{Classical linear voter model.}

Let $p(\cdot)$ denote a probability measure on $\Z^d$ with finite support. The dynamics of the classical voter model can be defined thanks to the following set of rules:
\begin{itemize}
\item state-copying rules: for each $x$ in the support of $p(\cdot)$, a rule with $A := \{ 0,x \}$ and $r:=p(x)$, with $f(w_0, w_x):=w_x$.
\end{itemize}
One might interpret this model as describing the evolution of opinions of individuals attached to the sites of $\Z^d$, with $\S$ representing the set of possible opinions. The individual at $x$ waits for a unit exponential time, then chooses a random location $y \in \Z^d$ with probability $p(y-x)$, and adopts the opinion of the individual attached to site $y$. As such, the voter model does not satisfy the assumptions of Theorem \ref{t:theoreme-principal}, since it does not enjoy the positive rates property. As a consequence, we add to this model a list of unconditional rules so as to enforce this property:  
\begin{itemize}
\item unconditional rules: for each $v \in \S$, a rule with $A := \emptyset$, $r>0$ and $f \equiv v$.
\end{itemize}
We call the resulting model "noisy voter model". Note that this addition dramatically changes the dynamics of the voter model, since it automatically turns it into an ergodic interacting particle system. Note that we may consider this addition as part of the perturbation of the original voter model we want to study, but this part of the perturbation has to be included in the dynamics prior to the application of Theorem  \ref{t:theoreme-principal}.

\begin{prop}
The noisy voter model satisfies the assumptions of Theorem \ref{t:theoreme-principal}.
\end{prop}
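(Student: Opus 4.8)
The plan is to exhibit an exploration process associated with a CFTP time for the noisy voter model, and to verify that its total number of points has a finite exponential moment. The key observation is that the noisy voter model has the positive rates property by construction (the unconditional rules), so the real content is producing the exploration process and controlling its size. The natural exploration strategy is: start at the origin, and whenever the most recent point in the graphical construction that we have not yet explored corresponds to a state-copying rule $(f, \{0, x\}, p(x))$ at site $y$, we need to know the state at the \emph{source} site $y + x$ at that time (the state at $y$ itself becomes irrelevant after the copy); whenever it corresponds to an unconditional rule at a relevant site, exploration along that branch can stop, since the output value is then deterministic regardless of $\xi$. Formally, I would let $\theta(X)$ return the set of sites whose state still needs to be resolved given the partial information $X$: concretely, one tracks a current ``active'' set of sites (initially $\{0\}$), peels off the most recent relevant Poisson point, and updates the active set — removing a site when the point there is an unconditional rule, and replacing $y$ by its source $y+x$ when the point is a state-copying rule. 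The process terminates when the active set becomes empty, and $T := \gamma_\infty$ is then a CFTP time, with $[\Phi_T^{0-}(\xi)](0)$ determined by the sequence of unconditional rules that capped the branches — this is measurable with respect to $\X_\infty$, as required.

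The main obstacle is twofold. First, I must check condition \eqref{e:borne-taille}: since $p(\cdot)$ has finite support, each state-copying step moves a site by a bounded amount, and the active set has size that can grow, but only linearly in the number of exploration steps, so $\theta(\X_\ell) \subset \{-\beta(\ell),\dots,\beta(\ell)\}^d$ with $\beta(\ell) = O(\ell)$; this is where finiteness of the support of $p(\cdot)$ is essential. The second and more delicate obstacle is the exponential moment bound on $|\X_\infty|$. Here the right picture is a branching-type comparison: because $p(\cdot)$ is a probability measure, a state-copying rule does not increase the number of active sites — it merely relocates one — so the active set size is in fact \emph{non-increasing} except that it never grows at all, while an unconditional rule strictly decreases it (or caps a branch). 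Each time the most recent relevant point at a given active site is encountered, it is an unconditional rule with probability $\rho_v := r_{\iota_v}/\big(\text{sum of rates over rules relevant at that site}\big)$, bounded below by a positive constant $\rho > 0$ depending only on the rates. Hence after each exploration step there is a probability bounded below by $\rho$ that the active set loses a member, and since it never gains one, the number of steps until the active set empties is stochastically dominated by a sum of geometric random variables, which is exponentially integrable.

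Concretely, I would argue: let $k_n := |\theta(\X_n)|$ be the size of the active set after $n$ steps (with $k_0 = 1$); then $k_{n+1} \le k_n$ always, and $k_{n+1} = k_n - 1$ with conditional probability at least $\rho$ given $\X_n$ (on the event $k_n \ge 1$), by the memoryless / rate-ratio property of the Poisson construction — the point $\n(B_n,\gamma_n)$ is, conditionally, a rule chosen with probability proportional to its rate among those relevant at the active sites. Since $k_n$ must reach $0$ after at most a number of decrements equal to $k_0 = 1$ — wait, more carefully: a state-copying step can keep $k_n$ constant indefinitely, but each step is an independent trial with success probability $\ge \rho$ of \emph{capping} the currently-probed branch, and there are only finitely many branches to cap because the active set never grows; so $|\X_\infty|$ is dominated by a negative-binomial-type sum and $\E(e^{\mu|\X_\infty|}) < \infty$ for small $\mu > 0$. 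This establishes all the hypotheses of Theorem~\ref{t:theoreme-principal}, completing the proof.
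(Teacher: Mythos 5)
Your proposal is correct and follows essentially the same route as the paper: a backward exploration that tracks the (single) source site of each successive copying event and stops at the first unconditional rule, with a geometric tail bound coming from the rate ratio $(\sum_{v} r_{\iota_v})(\sum_{i} r_i)^{-1}$. The only cosmetic point is that your ``active set'' is always a singleton --- not because $p(\cdot)$ is a probability measure, but because the copying rule's output depends only on the source coordinate $w_x$ --- so the negative-binomial bookkeeping collapses to a single geometric variable, exactly as in the paper.
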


\begin{proof}
The corresponding exploration process is defined as follows. First $\theta(\emptyset) := \{ 0 \}$. 
Then, given a non-empty $X \in \Omega_f$, denote by $(y,j,s)$ the element of $X$ with the lowest time-coordinate. Then let $\theta(X):=\emptyset$ if $A_j = \emptyset$. Otherwise, $A_j$ is of the form $\{ 0, x  \}$, and we let $\theta(X):= \{ y+x \}$. We denote by $(\X_n)_{n}$ the corresponding exploration process. Note that condition \eqref{e:borne-taille} is satisfied with $\beta(\ell) := \sup \{  |z|;    \ p(z) \neq 0   \} \times \ell$. To prove that the $|\X_{\infty}|$ has some finite exponential moment, note that, conditional upon $\X_{n}$, if $\theta(\X_n) \neq \emptyset$, the probability that the next point to be included in $\X_{n+1}$ corresponds to a rule of the form $f \equiv v$ for some $v$, is bounded below by the ratio  $(\sum_{v \in \S} r_{\iota_v}) (\sum_{i \in \I} r_i)^{-1}$. Since in this case $\theta(\X_{n+1})=\emptyset$, a geometric upper bound holds for the tail of $|\X_{\infty}|$. 
Let us now check that $T$ defined as the least time-coordinate of an element in $\X_{\infty}$ is  a CFTP time for the dynamics, associated with the exploration process defined by $\theta$. Indeed, it is clear from the definition of the dynamics that if  the element of  $X_{\infty}$ with the least time-coordinate is associated with the rule $\iota_v$, then $\left[\Phi_{T}^{0-}(\xi)\right](0)=v$ for all $\xi$.
\end{proof}

\subsubsection{Voter model with asymmetric polling} 

We now consider a variation upon the classical voter model. Let $A^{(1)},\ldots, A^{(m)}$ denote a list of finite non-empty subsets of $\Z^d$, $r^{(1)},\ldots, r^{(m)}$ denote a list of non-negative real numbers, and  take as a state space $S := \{  +, -  \}$.
The set of rules characterizing our model is the following: 
\begin{itemize}
\item polling rules:  for each $1 \leq i \leq m$, a rule with $A:=A^{(i)}$ and 
$f(w):=+$ if $w_x = +$ for at least one $x \in A^{(i)}$, $f(w):=-$ otherwise.
\end{itemize}
We call this model the voter model with asymmetric polling. Here, an individual performs a poll within a randomly chosen finite subset of individuals, and adopts an opinion that depends on the results of the poll in an asymmetric way: indeed, the individual will adopt the opinion denoted $+$ if any of the individuals in the poll expresses the opinion $+$, while, to adopt the opinion denoted $-$, consensus within the poll is required.

As in the case of the classical linear voter model, we add to the above set of rules a list of unconditional rules ensuring the positive rates property:
\begin{itemize}
\item unconditional rules: for $v=+,-$, a rule with $A := \emptyset$, $r>0$ and $f \equiv v$.
\end{itemize}
The resulting model is called noisy voter model with asymmetric polling.

\begin{prop}
The noisy voter model with asymmetric polling satisfies the assumptions of Theorem \ref{t:theoreme-principal}.
\end{prop}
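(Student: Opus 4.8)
The plan is to adapt the argument just given for the noisy (linear) voter model, the one genuinely new point being that the exploration process must now keep track of a whole \emph{set} of sites instead of a single one, so as to reflect the ``logical OR'' nature of the polling rules. The positive rates property holds by construction, the added unconditional rules having indices which we denote $\iota_+$ and $\iota_-$, so all that remains is to produce a CFTP time associated with an exploration process whose total size has a finite exponential moment.

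First I would specify the map $\theta$. Set $\theta(\emptyset):=\{0\}$, and, for a non-empty $X\in\Omega_f$, list the points of $X$ by decreasing time coordinate and run the following bookkeeping, started from the one-point set $\{0\}$: when a point with site $y$ and rule-index $j$ is read, replace the current set $B$ by $\emptyset$ and stop if $j=\iota_+$, by $B\setminus\{y\}$ if $j=\iota_-$, and by $(B\setminus\{y\})\cup(y+A^{(i)})$ if $j$ is the $i$-th polling rule; let $\theta(X)$ be the final value of $B$. Then \eqref{e:borne-taille} holds with $\beta(\ell):=\ell\cdot\max\{|z|;\ z\in A^{(i)},\ 1\le i\le m\}$, since each step enlarges the active set by at most $\max_i\max_{z\in A^{(i)}}|z|$ in every coordinate. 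Let $(\X_n)_n$ be the associated exploration process, $B_n:=\theta(\X_n)$, $(x_n,i_n,t_n):=\n(B_n,\gamma_n)$, $N:=\inf\{n\ge 0;\ B_n=\emptyset\}$, and $T:=\gamma_{\infty}=\gamma_N$; note that $B_0,\dots,B_{N-1}$ are non-empty and $\gamma_0>\dots>\gamma_{N-1}>\gamma_N=T$. The intended meaning of $B_n$ is the set of sites whose value at the frontier time $\gamma_n$ one still needs to inspect in order to decide whether $[\Phi_T^{0-}(\xi)](0)=+$.

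The core of the proof is then the invariant, valid for $0\le n\le N-1$ and every $\xi\in\S^{\Z^d}$,
\[
\{[\Phi_T^{0-}(\xi)](0)=+\}=\{\exists\,z\in B_n:\ [\Phi_T^{\gamma_n-}(\xi)](z)=+\},
\]
which I would prove by induction on $n$ (the case $n=0$ being trivial since $B_0=\{0\}$). The inductive step, for $n\le N-2$, uses that, by definition of $\n(B_n,\gamma_n)$, no point of $\Nsp$ sits at a site of $B_n$ with time-coordinate in $(t_n,\gamma_n)$; hence $[\Phi_T^{\gamma_n-}(\xi)](z)=[\Phi_T^{t_n-}(\xi)](z)$ for $z\in B_n\setminus\{x_n\}$ and $[\Phi_T^{\gamma_n-}(\xi)](x_n)=f_{i_n}\big(([\Phi_T^{t_n-}(\xi)](x_n+y))_{y\in A_{i_n}}\big)$, and a case analysis on the type of $i_n$ — trivial rule, resp.\ logical OR — together with $\gamma_{n+1}=t_n$ and the update rule defining $\theta$ propagates the invariant to step $n+1$. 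To finish, observe that, the sets $A^{(i)}$ being non-empty, a polling rule can never empty $B$, so the last revealed rule $i_{N-1}$ must be $\iota_+$ or $\iota_-$; since both are unconditional, $[\Phi_T^{\gamma_{N-1}-}(\xi)](x_{N-1})$ equals $+$ in the first case and $-$ in the second (and in the second case $x_{N-1}$ is the only element of $B_{N-1}$, as removing it leaves $\emptyset$). Feeding this into the invariant at step $N-1$ shows that $[\Phi_T^{0-}(\xi)](0)$ equals $+$ or $-$ according to whether $i_{N-1}=\iota_+$ or $i_{N-1}=\iota_-$, hence is the same for all $\xi$ and is a measurable function of $\X_{\infty}$ (being read off $i_{N-1}$, the rule of the point of $\X_{\infty}$ with least time coordinate). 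So $T$ is a CFTP time associated with this exploration process, as soon as $\X_{\infty}$ is a.s.\ finite.

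Finally, the a.s.\ finiteness and exponential integrability of $|\X_{\infty}|$ come from the one feature specific to the asymmetric polling model: at each step, conditionally on $\X_n$, the revealed rule $i_n$ equals $\iota_+$ with probability exactly $\rho_+:=r_{\iota_+}\big(\sum_{i\in\I}r_i\big)^{-1}>0$, a quantity insensitive to $|B_n|$ and to the past (this is just the ``competing clocks'' property of $\Nsp$). Since revealing an $\iota_+$ rule empties the active set, the event $\{N>m\}$ forces the first $m$ revealed rules to all differ from $\iota_+$, whence $\P(N>m)\le(1-\rho_+)^m$; therefore $|\X_{\infty}|=N$ is a.s.\ finite and $\E(e^{\mu|\X_{\infty}|})<+\infty$ for every $0<\mu<-\log(1-\rho_+)$, which is exactly what Theorem \ref{t:theoreme-principal} demands, and concludes the proof. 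The only real (if minor) point requiring attention is that the single scalar frontier $\gamma_n$ is legitimate even though, going backward in time, successive polling steps a priori endow the sites they produce with distinct ``deadlines''; this is exactly what the identity $[\Phi_T^{\gamma_n-}(\xi)](z)=[\Phi_T^{t_n-}(\xi)](z)$ on $B_n\setminus\{x_n\}$ takes care of.
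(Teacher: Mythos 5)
Your proposal is correct and follows essentially the same route as the paper: the same recursively defined $\theta$ tracking the set of sites whose $+$/$-$ status still needs to be determined, the same geometric tail bound via the probability $r_{\iota_+}(\sum_i r_i)^{-1}$ of revealing an unconditional $+$ rule at each step, and the same identification of $[\Phi_T^{0-}(\xi)](0)$ from the rule attached to the last explored point. Your explicit ``OR-invariant'' induction merely spells out the step the paper dismisses as clear from the definition of the dynamics (and quietly fixes a typo, $\setminus\{j\}$ versus $\setminus\{y\}$, in the paper's definition of $\theta$).
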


\begin{proof}
The corresponding exploration process is defined as follows. First $\theta(\emptyset) := \{ 0 \}$. 
Then, given a non-empty set $X \in \Omega_f$, denote by $(y,j,s)$ the element of $X$ with the least time-coordinate. Then let $\theta(X):=\emptyset$ if $A_j = \emptyset$ and $f_j \equiv +$. 
If $A_j = \emptyset$ and $f_j \equiv -$, then let $\theta(X) := \theta(X \setminus \{   (y,j,s)   \}) \setminus \{ j \}$. Otherwise, $A_j$ is of the form $A^{(k)}$ for some $1 \leq k \leq m$, and we let  
 $\theta(X) := \theta(X \setminus \{   (y,j,s)   \}) \cup (y+A^{(k)})$.
 We denote by $(\X_n)_{n}$ the corresponding exploration process. Note that condition \eqref{e:borne-taille} is satisfied with $\beta(\ell) := \sup \{  |z|;      z \in \cup_{1 \leq k \leq m} A^{(k)} \} \times \ell$. 

To prove that the number of points in $|\X_{\infty}|$ has some finite exponential moment, note that, conditional upon $\X_{n}$, if $\theta(\X_n) \neq \emptyset$, the probability that 
the next point to be included in $\X_{n+1}$ corresponds to the rule with $f \equiv +$, is bounded below by the ratio  $r_{\iota_+} (\sum_{i \in \I} r_i)^{-1}$. Since in this case 
$\theta(X_{n+1})=\emptyset$, a geometric upper bound holds for the tail of $|\X_{\infty}|$. 
Let us now check that $T$ defined as the least time-coordinate of an element in $\X_{\infty}$ is  a CFTP time for the dynamics, associated with the exploration process defined by $\theta$. Indeed, it is clear from the definition of the dynamics that if 
the element of  $\X_{\infty}$ with the least time coordinate is associated with the rule $\iota_+$, $\left[\Phi_{T}^{0-}(\xi)\right](0)=+$ for all $\xi$, while, if this element is associated with 
 the rule $\iota_-$, $\left[\Phi_{T}^{0-}(\xi)\right](0)=-$ for all $\xi$.
\end{proof}

\section{Proofs of Theorems \ref{t:finitude}, \ref{t:temps-exp} and \ref{t:espace-exp}}\label{s:preuve-1}

We start with a proposition showing that, if $T^*$ is finite with probability one, then $T^*$ is indeed  a CFTP time. 

\begin{prop}\label{p:base-ambig}
If $\P(T^* > - \infty)=1$, then $T^*$ is a CFTP time.
\end{prop}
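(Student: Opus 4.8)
The plan is to show that, on the event $\{T^* > -\infty\}$ (which by hypothesis has probability one), one can resolve all the ambiguities attached to the space-time points in $\Amb_\infty$ in a well-founded inductive fashion, and that once this is done the value $[\Phi_{T^*}^{0-}(\xi)](0)$ is forced to be independent of $\xi$. The starting observation is that $T^* > -\infty$ forces $|\Amb_\infty| < +\infty$, so by the no-accumulation property (iii) of $\Omega$ (or directly by finiteness) the set $\Amb_\infty$, together with all the finitely many elements of $\Nsp$ produced by applying $(T,H)$ after the shifts $\tau_{x,t}$ for $(x,t)\in\Amb_\infty$, carries a partial order by time coordinate in which every point has only finitely many predecessors, and the points furthest in the past are associated — via \eqref{e:rec-amb} — with an empty set of subsequent ambiguities. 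This is exactly the structure that makes a step-by-step resolution possible.

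First I would set up the induction precisely. For $(x,t)\in\Amb_\infty$, write $H_{x,t} := \tau_{x,t}^{-1}\circ H\circ\tau_{x,t} \subset \Nsp$, the set of ambiguous points generated when the coupling time with ambiguities $(T,H)$ is applied at $(x,t)$; by definition of $\Amb_{n+1}$ in \eqref{e:rec-amb}, for every $\alpha = (z,i,s) \in H_{x,t}$ and every $y \in A_i$ the space-time point $(z+y,s)$ again lies in $\Amb_\infty$. The claim I would prove by induction, going upward in time through the finitely many points of $\Amb_\infty$, is: for every $(x,t)\in\Amb_\infty$ and every $\alpha = (z,i,s)\in H_{x,t}$, the quantity $e(\alpha, t + T\circ\tau_{x,t}, \xi)$ — equivalently $[\Phi_{t+T\circ\tau_{x,t}}^{s}(\xi)](z)$ — does not depend on $\xi\in\S^{\Z^d}$. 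The base case consists of the points of $\Amb_\infty$ that are minimal for the time order: for such $(x,t)$ one must have $H_{x,t}=\emptyset$ (otherwise \eqref{e:rec-amb} would produce a strictly earlier point of $\Amb_\infty$), and then \eqref{e:coupl-ambig} with $H=\emptyset$ (which is \eqref{e:coupling}) gives the conclusion vacuously/directly. For the inductive step at $(x,t)$, each $\alpha = (z,i,s)\in H_{x,t}$ depends, through the flow between time $t+T\circ\tau_{x,t}$ and time $s$, only on the values at the sites $z+y$, $y\in A_i$, produced by earlier points of $\Nsp$; applying the hypothesis that $(T,H)$ is a coupling time with ambiguities at the shifted point, together with the induction hypothesis applied to the points $(z+y,s)\in\Amb_\infty$ which are strictly earlier in time, shows that $e(\alpha,\cdot,\xi)$ is $\xi$-independent. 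Finally, applying the same argument at $(0,0)\in\Amb_0$: all ambiguities in $H_{0,0} = H$ are resolved $\xi$-independently, so \eqref{e:coupl-ambig} yields that $[\Phi_{T}^{0-}(\xi)](0) = [\Phi_{T^*}^{0-}(\xi)](0)$ is the same for all $\xi$ (using that $T^* \le T$ and that applying the flow from an earlier time does not change this once the value at site $0$ at time $T$ is already $\xi$-independent — more carefully, one runs the flow from $T^*$ up to the relevant intermediate times and invokes the resolution of ambiguities along the way). Since $T^*$ is negative and, by hypothesis, a.s.\ finite, this establishes that $T^*$ satisfies \eqref{e:coupling}, i.e.\ is a CFTP time.

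The main obstacle I anticipate is making the well-foundedness of the induction completely rigorous: a priori $\Amb_\infty$ is a union of infinitely many $\Amb_n$, and one must rule out infinite descending chains of ambiguity-dependencies within the event $\{T^* > -\infty\}$. The point is that $T^* > -\infty$ is exactly equivalent to $|\Amb_\infty| < +\infty$ by construction, and a finite subset of $\Nsp$ (recall no two points share a time coordinate) is automatically well-ordered by time from above, so every nonempty subset has a time-maximal and the recursion terminates; care is also needed to check that the various sets $H_{x,t}$ and the induced $\Nsp$-points are themselves finite and that the flow compositions $\Phi_{t_1}^{t_2} = \Phi_{t'}^{t_2}\circ\Phi_{t_1}^{t'}$ are being applied correctly across the resolution steps. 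A secondary, purely bookkeeping, difficulty is propagating the measurability/stopping structure, but since the statement only asserts that $T^*$ is a CFTP time — a negative, a.s.\ finite random variable satisfying \eqref{e:coupling} — and since $T^*$ was already defined as a concrete $\F$-measurable quantity, this does not require extra work beyond what is in the construction in Section \ref{s:definitions}.
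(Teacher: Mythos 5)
Your overall strategy is the right one and is essentially the paper's: on $\{T^*>-\infty\}$ the set $\Amb_{\infty}$ is finite, and one resolves the ambiguities inductively from the most remote points up to $(0,0)$ (the paper organizes this as an induction on $|\Amb_{\infty}|$, using the inclusion $\tau_{x,t}^{-1}\circ\Amb_{\infty}\circ\tau_{x,t}\subset\bigcup_{k\geq 1}\Amb_k$ to shrink the cardinality under shifts; your induction on the time order is an acceptable variant). However, the induction claim you propose is not correct as stated, and the inductive step would fail. Writing $H_{x,t}:=\tau_{x,t}^{-1}\circ H\circ\tau_{x,t}$ as you do, you claim that for $(x,t)\in\Amb_{\infty}$ and $\alpha=(z,i,s)\in H_{x,t}$ the quantity $e(\alpha,\,t+T\circ\tau_{x,t},\,\xi)$ is independent of $\xi$. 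But $t+T\circ\tau_{x,t}$ is exactly the bottom of the window explored by the coupling time with ambiguities applied at $(x,t)$, and the points of $H_{x,t}$ are precisely those whose outcome is \emph{not} determined within that window: if $s$ is close to $t+T\circ\tau_{x,t}$ and no point of $\Nsp$ acts on $z+y$ in between, then $[\Phi_{t+T\circ\tau_{x,t}}^{s-}(\xi)](z+y)=\xi(z+y)$, and $e(\alpha,t+T\circ\tau_{x,t},\xi)$ genuinely depends on $\xi$. Concretely, your inductive step needs the values at the sites $(z+y,s)$, $y\in A_i$, to be $\xi$-independent when the flow is started at time $t+T\circ\tau_{x,t}$, whereas the induction hypothesis at $(z+y,s)$ only delivers $\xi$-independence when the flow is started at time $s+T\circ\tau_{z+y,s}$ or earlier; there is no reason to have $t+T\circ\tau_{x,t}\le s+T\circ\tau_{z+y,s}$, and $\xi$-independence propagates only from later starting times to strictly earlier ones.

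The fix is the one the paper uses: formulate the claim with the single global starting time $T^*$, which by construction satisfies $T^*\le t'+T\circ\tau_{x',t'}$ for every $(x',t')\in\Amb_{\infty}$. With this choice, the $\xi$-independence of $[\Phi_{u'}^{s-}(\cdot)](z+y)$ for the starting time $u'=s+T\circ\tau_{z+y,s}$ transfers down to $T^*$ via $\Phi_{T^*}^{s-}=\Phi_{u'}^{s-}\circ\Phi_{T^*}^{u'-}$, and applying \eqref{e:coupl-ambig} at $(z+y,s)$ to the configurations $\Phi_{T^*}^{u'-}(\xi_j)$ is legitimate because $e(\alpha',u',\Phi_{T^*}^{u'-}(\xi))=e(\alpha',T^*,\xi)$ for $\alpha'\in H_{z+y,s}$. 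Two smaller slips: your base case asserts that a time-minimal $(x,t)\in\Amb_{\infty}$ has $H_{x,t}=\emptyset$, but it may instead have $H_{x,t}\neq\emptyset$ with $A_i=\emptyset$ for every $(z,i,s)\in H_{x,t}$ (such points generate no new elements of $\Amb_{\infty}$); the paper handles this by noting that for such rules $e(\alpha,\cdot,\xi)$ is constant. Also, distinct points of $\Amb_{\infty}$ can share a time coordinate (the several $(z+y,s)$, $y\in A_i$, coming from a single $\alpha$), so the time order on $\Amb_{\infty}$ is not total; strong induction on the time level still works, but your appeal to "no two points share a time coordinate" does not apply to $\Amb_{\infty}$.
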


\begin{proof}
Note that, with our definitions, $T^*>-\infty$ implies that $|\Amb_{\infty}|<+\infty$.
The proof is by induction on $|\Amb_{\infty}|$. Specifically, we shall show for all $n \geq 0$ that the following property $(P_n)$ is true: on $\{ |\Amb_{\infty}|=n  ,   \  T^* > - \infty\}$, for all $\xi_1, \xi_2 \in \S^{\Z^d}$, 
$[\Phi_{T^*}^{0-}(\xi_1)](0) = [\Phi_{T^*}^{0-}(\xi_2)](0)$. 
Assume throughout that $T^* > -\infty$, and let us start with $n=1$. If  $|\Amb_{\infty}|=1$, a first possibility is that 
$H = \emptyset$. In this case, the definition of a CFTP time with ambiguities shows that $[\Phi_{T}^{0-}(\xi_1)](0) = [\Phi_{T}^{0-}(\xi_2)](0)$ for all $\xi_1, \xi_2 \in \S^{\Z^d}$, whence, since by definition $T^* \leq T$, the fact that $[\Phi_{T^*}^{0-}(\xi_1)](0) = [\Phi_{T^*}^{0-}(\xi_2)](0)$ for all $\xi_1, \xi_2 \in \S^{\Z^d}$.
If $H \neq \emptyset$, the fact that $|\Amb_{\infty}|=1$ shows that every $\alpha=(x,i,t) \in H$ is such that 
$A_i = \emptyset$. In this case, for any $s \leq t$, $e(\alpha,\xi,s)$ depends neither on $\xi$ nor $s$, so that again $[\Phi_{T^*}^{0-}(\xi_1)](0) = [\Phi_{T^*}^{0-}(\xi_2)](0)$ for all $\xi_1, \xi_2 \in \S^{\Z^d}$.
We now show that  $(P_{n+1})$ is valid provided that $(P_k)$ is valid for all $1 \leq k \leq n$. Assume that  $|\Amb_{\infty}|=n+1$. It is enough to prove that, for any  $\alpha = (z,i,t) \in H$ such that $A_i \neq \emptyset$, $e(\alpha, T^*, \xi)$ admits the same value for every $\xi \in \S^{\Z^d}$.  
Consider such an $\alpha=(z,i,t)$,   let $y \in A_i$, and $x:=z+y$. Then observe that,  by definition, 
\begin{equation}\label{e:inclusion-translation}\tau_{x,t}^{-1} \circ \Amb_{\infty} \circ \tau_{x,t} \subset  \bigcup_{k \geq 1} \Amb_k.\end{equation}
Since  $\Amb_{0} =  \{ (0,0)  \}$ while  $(0,0)  \notin \bigcup_{k \geq 1} \Amb_k$,  our assumption that $|\Amb_{\infty}|=n+1$ implies that $|  \bigcup_{k \geq 1} \Amb_k  | = n$.  We thus deduce from \eqref{e:inclusion-translation} that $|  \Amb_{\infty} \circ \tau_{x,t} | \leq n$. Moreover, \eqref{e:inclusion-translation} shows that $t+T^* \circ \tau_{x,t} \geq T^*$, so that our assumption that 
$T^*>-\infty$ implies that $T^* \circ \tau_{x,t} > -\infty$. Our induction hypothesis then implies that   for all $\xi_1, \xi_2 \in \S^{\Z^d}$, $[\Phi_{T^*}^{0-}(\xi_1)](0) \circ \tau_{x,t}= [\Phi_{T^*}^{0-}(\xi_2)](0) \circ \tau_{x,t}$, which rewrites as $[\Phi_{t+T^* \circ \tau_{x,t}}^{t-}(\xi_1)](x) = [\Phi_{t+T^* \circ \tau_{x,t}}^{t-}(\xi_2)](x)$. 
We have seen that $T^* \leq t+T^* \circ \tau_{x,t}$, so we can deduce that $e(\alpha, T^*, \xi)$ does not depend on   $\xi \in \S^{\Z^d}$. 
\end{proof}

Let $M$ denote the intensity measure of the set $\Amb_1$, i.e. the positive measure on $\Z^d \times \R$ defined for all Borel set $C$ by  
$$M(C) := \E ( |\Amb_1 \cap C| )  =  \E \left(  \sum_{(z,i,s) \in  H }  \sum_{y \in A_i} \un( (z+y,s) \in C ) \right).$$
We use the notation $\star$ for  the convolution product of measures on $\Z^d \times \R$. For all $n \geq 0$,  $M^{\star n}$ denotes the  convolution product $M \star \cdots \star M$ with $n$ factors, with the convention $M^{\star 0} := \delta_{(0,0)}$.

Our key first-moment estimates are given in the next proposition and its corollary.

\begin{prop}\label{p:branch-ambig}
For any measurable $f \ : \ \Z^d \times \R \to \R_+$, and any $n \geq 0$, one has
$$\E\left( \sum_{\zeta \in \Amb_n} f(\zeta) \right)  \leq   \int f(\zeta) d M^{\star n}(\zeta),$$
with the convention $M^{\star 0} := \delta_{(0,0)}$.
\end{prop}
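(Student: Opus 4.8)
The plan is to prove, by induction on $n$, the whole family of inequalities indexed by nonnegative measurable $f$. For $n=0$ there is nothing to do: $\Amb_0=\{(0,0)\}$, $M^{\star 0}=\delta_{(0,0)}$, and both sides equal $f(0,0)$. For the inductive step, I would first unfold the recursion \eqref{e:rec-amb} and use that $\sum_{\zeta\in\bigcup_k E_k}f(\zeta)\le\sum_k\sum_{\zeta\in E_k}f(\zeta)$ to obtain the pointwise bound
\[
\sum_{\zeta\in\Amb_{n+1}}f(\zeta)\ \le\ \sum_{(x,t)\in\Amb_n}F(x,t),\qquad F(x,t):=\sum_{(z,i,s)\in\,\tau_{x,t}^{-1}\circ H\circ\tau_{x,t}}\ \sum_{y\in A_i}f(z+y,s).
\]
A change of variables shows that $F(x,t)$ is the value at $\tau_{x,t}\Nsp$ of the functional $\omega\mapsto\sum_{(z,i,s)\in H}\sum_{y\in A_i}f\big((z+y,s)+(x,t)\big)$, whose expectation, by stationarity of $\Nsp$ under $\tau_{x,t}$ and the very definition of $M$, equals $g(x,t):=\int f(\zeta+(x,t))\,\dd M(\zeta)$, a nonnegative measurable function of the deterministic pair $(x,t)$.

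The crux is then the inequality $\E\big(\sum_{(x,t)\in\Amb_n}F(x,t)\big)\le\E\big(\sum_{(x,t)\in\Amb_n}g(x,t)\big)$, and this is where the first-moment character of the statement bypasses the dependence between $\Amb_n$ and the local copies of $H$ inside $F$. I would first note that, since $H$ is a.s. finite, $\Amb_n$ is a.s. finite, and enumerate its points in decreasing order of time as $\zeta_1=(x_1,t_1),\ldots,\zeta_N=(x_N,t_N)$. For the filtration $(\F_u)_{u\le 0}$, which increases as $u$ decreases, each $t_m$ is a stopping time, since $\{t_m\ge u\}=\{\,|\Amb_n\cap(\Z^d\times[u,0])|\ge m\,\}\in\F_u$ (a point of any $\Amb_k$ can only produce children at strictly earlier times, and $H\cap\Nsp_u\in\F_u$ by the stopping property), and $\zeta_m$ and $\{N\ge m\}$ are measurable with respect to the associated stopped $\sigma$-algebra. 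On the other hand, by the stopping property, $H$ is measurable with respect to the restriction of $\Nsp$ to negative times, so $F(\zeta_m)$ depends only on the restriction of $\Nsp$ to times strictly less than $t_m$; by the strong Markov (fresh-independent-past) property of the Poisson process, that restriction is, conditionally on the stopped $\sigma$-algebra, a fresh copy of $\Nsp$ shifted in time by $t_m$, independent of the stopped $\sigma$-algebra. Hence $\E\big[F(\zeta_m)\,\un_{N\ge m}\mid\F_{t_m}\big]=g(x_m,t_m)\,\un_{N\ge m}$, and summing over $m$ (legitimate since $f\ge 0$, by Tonelli) gives the claimed inequality. Applying the induction hypothesis to the nonnegative measurable function $g$ then yields $\E\big(\sum_{(x,t)\in\Amb_n}g(x,t)\big)\le\int g\,\dd M^{\star n}=\int\big(\int f(\zeta+\zeta')\,\dd M(\zeta)\big)\dd M^{\star n}(\zeta')=\int f\,\dd M^{\star(n+1)}$, which closes the induction.

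I expect the delicate point to be precisely this conditional-refresh step: setting up the reverse filtration cleanly, checking that the times of the points of $\Amb_n$ are stopping times for it (which rests on the monotonicity-in-time of the branching recursion and on the stopping property of $H$), and invoking the strong Markov property of $\Nsp$ so that the local $H$-construction hanging from an already-explored point of $\Amb_n$ behaves like an independent, suitably translated copy of the original one. Everything else — the set-theoretic unfolding of \eqref{e:rec-amb}, the change of variables, the use of stationarity and of the definition of $M$, and the final Tonelli computation identifying the convolution — is routine.
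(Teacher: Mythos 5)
Your proposal is correct and follows the same skeleton as the paper's proof — induction on $n$, the same pointwise domination $\sum_{\zeta\in\Amb_{n+1}}f(\zeta)\le\sum_{(x,t)\in\Amb_n}g_{x,t}$, the same replacement of the local functional by its mean $\varphi(x,t)=\int f(\zeta)\,d(\delta_{(x,t)}\star M)(\zeta)$, and the same convolution identity to close the induction — but it implements the key decoupling step differently. The paper rewrites $\sum_{(x,t)\in\Amb_n}g_{x,t}$ as a sum over the Poisson points and applies the refined Campbell (Mecke) formula twice, using that for a \emph{deterministic} point $\alpha=(w,j,t)$ the event $\{\pi(\alpha)\subset\Amb_n(\Nsp\cup\{\alpha\})\}$ is $\F_t$-measurable while $g_{x,t}$ is $\sigma(\Nsp_{<t})$-measurable, hence independent. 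You instead enumerate the (a.s.\ finitely many) points of $\Amb_n$ as stopping times of the reverse-time filtration and invoke the strong Markov property of the Poisson process to condition at each random time $t_m$; this avoids Palm calculus at the cost of setting up the stopped $\sigma$-algebras and the fresh-past property, and a tie-breaking rule is needed in your enumeration since distinct points of $\Amb_n$ can share a time coordinate (several $y\in A_i$ give the same $s$), though this is harmless. Both routes rest on exactly the same non-trivial ingredient, namely the stopping property of $\Amb_n$ ($\Amb_n\cap(\Z^d\times[u,0])$ is $\F_u$-measurable), which you assert in a parenthesis; be aware that the paper devotes two lemmas (a measurability statement for $(\omega,t)\mapsto H\cap\Nsp_t$, then an induction on $n$) to establishing it, and your argument is incomplete as written until that induction is carried out. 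With that lemma in hand, your optional-stopping version of the first-moment bound is a legitimate and arguably more elementary alternative to the Mecke-formula computation.
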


\begin{coroll}\label{c:branch-ambig}
For measurable $f \ : \ \Z^d \times \R \to \R_+$, any non-negative $\F_0-$measurable random variable $D$, and any $n \geq 0$, one has
$$\E\left( \sum_{\zeta \in \Amb_n} f(\zeta) \cdot D \circ \tau_{\zeta} \right)  \leq \E(D)  \cdot  \int f(\zeta) d M^{\star n}(\zeta),$$
with the convention $M^{\star 0} := \delta_{(0,0)}$.
\end{coroll}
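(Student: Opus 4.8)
The plan is to argue by induction on $n$, running the proof of Proposition~\ref{p:branch-ambig} with the extra weight $D\circ\tau_\zeta$ carried along throughout (so that the case $D\equiv 1$ recovers that proposition). The base case $n=0$ is immediate: $\Amb_0=\{(0,0)\}$ and $\tau_{(0,0)}$ is the identity, so both sides equal $f(0,0)\,\E(D)$. For the inductive step, the first move is to rewrite the recursion \eqref{e:rec-amb} in a ``peeling from the top'' form. Using the self-similar structure of that recursion --- the exploration performed at a space-time point $(x,t)$ reads $H$ only in the shifted environment $\tau_{x,t}\Nsp$ --- one gets, for every $n\ge 0$,
\[
\Amb_{n+1}\ \subseteq\ \bigcup_{(z,i,s)\in H}\ \bigcup_{y\in A_i}\ \Big(\{(z+y,s)\}+\big(\Amb_n\circ\tau_{(z+y,s)}\big)\Big),
\]
where $\Amb_n\circ\tau_{(z+y,s)}$ denotes $\Amb_n$ computed in the environment $\tau_{(z+y,s)}\Nsp$, and where the inclusion (rather than equality) is all we need, since it lets us harmlessly over-count space-time points produced along several branches. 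Since $\tau_{(z+y,s)+\eta}=\tau_\eta\circ\tau_{(z+y,s)}$, plugging this into the left-hand side reduces the problem to bounding $\E\big(\sum_{(z,i,s)\in H}\sum_{y\in A_i}\Psi_{(z+y,s)}(\tau_{(z+y,s)}\Nsp)\big)$, where for a space-time point $\zeta_1$ we set $\Psi_{\zeta_1}(\omega):=\sum_{\eta\in\Amb_n(\omega)}f(\zeta_1+\eta)\,D(\tau_\eta\omega)$.

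The core of the argument is then a decoupling carried out via Mecke's formula for the Poisson process $\Nsp$ together with the independence of $\Nsp$ over disjoint regions. Writing $\sum_{(z,i,s)\in H}=\sum_{\alpha\in\Nsp}\un(\alpha\in H)$ and applying Mecke's identity, one must evaluate, for $J$-almost every $\alpha=(z,i,s)$ --- and every such $\alpha$ that can lie in $H$ has $s<0$ ---
\[
\E\Big(\un\big(\alpha\in H(\Nsp+\delta_\alpha)\big)\ \sum_{y\in A_i}\Psi_{(z+y,s)}\big(\tau_{(z+y,s)}(\Nsp+\delta_\alpha)\big)\Big).
\]
Two measurability observations make this factorize. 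By the stopping property of $H$, the event $\{\alpha\in H(\Nsp+\delta_\alpha)\}$ is a function of $(\Nsp+\delta_\alpha)$ restricted to the time-slab $[s,0[$. On the other hand, $\Amb_n$ and $D$ (the latter being $\F_0$-measurable, hence a function of the configuration at strictly negative times) are functions of the Poisson configuration at strictly negative times, so $\Psi_{(z+y,s)}(\tau_{(z+y,s)}\,\cdot\,)$ reads the configuration only on $\,]-\infty,s[\,$; in particular it is blind to the extra atom $\alpha$, which $\tau_{(z+y,s)}$ moves to time $0$. Since $[s,0[$ and $\,]-\infty,s[\,$ are disjoint, the two factors are independent, and the displayed expectation equals $\P\big(\alpha\in H(\Nsp+\delta_\alpha)\big)\cdot\sum_{y\in A_i}\E\big(\Psi_{(z+y,s)}(\tau_{(z+y,s)}\Nsp)\big)$. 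By translation-invariance of the law of $\Nsp$, $\tau_{(z+y,s)}\Nsp$ has the same distribution as $\Nsp$, hence $\E\big(\Psi_{(z+y,s)}(\tau_{(z+y,s)}\Nsp)\big)=\E\big(\sum_{\eta\in\Amb_n}f((z+y,s)+\eta)\,D\circ\tau_\eta\big)$, which by the induction hypothesis --- applied with the translated test function $\eta\mapsto f((z+y,s)+\eta)$ --- is at most $\E(D)\int f((z+y,s)+\eta)\,dM^{\star n}(\eta)$.

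Collecting the pieces, $\E\big(\sum_{\zeta\in\Amb_{n+1}}f(\zeta)\,D\circ\tau_\zeta\big)$ is bounded by
\[
\E(D)\int \P\big(\alpha\in H(\Nsp+\delta_\alpha)\big)\ \sum_{y\in A_i}\Big(\int f\big((z+y,s)+\eta\big)\,dM^{\star n}(\eta)\Big)\,dJ(\alpha).
\]
Now, by the very definition of the intensity measure $M$ together with a second application of Mecke's formula, one has $\int \P\big(\alpha\in H(\Nsp+\delta_\alpha)\big)\sum_{y\in A_i}h\big((z+y,s)\big)\,dJ(\alpha)=\int h\,dM$ for every measurable $h\ge 0$. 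Applying this with $h(\zeta_1):=\int f(\zeta_1+\eta)\,dM^{\star n}(\eta)$ and using Tonelli (all integrands are non-negative), the displayed quantity becomes $\E(D)\int f\,d(M\star M^{\star n})=\E(D)\int f\,dM^{\star(n+1)}$, which is exactly the asserted bound; taking $D\equiv 1$ gives Proposition~\ref{p:branch-ambig}. The convention about $\{T=-\infty\}$ causes no difficulty: there $H$ need not be finite, but the sums are still well-defined in $[0,+\infty]$ and the inequality is then trivial whenever its right-hand side is infinite.

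The step I expect to be the genuine obstacle is the decoupling of the second paragraph. One must identify precisely which portion of the Poisson configuration each of $H$, $\Amb_n$, and $D$ actually reads \emph{after composition with the relevant space-time shifts}, so as to land on honestly disjoint time-slabs; this is where the stopping property of $H$ and the $\F_0$-measurability of $D$ enter essentially, and one must also check that the deterministic atom $\delta_\alpha$ introduced by Mecke's formula is seen by the indicator factor only and not by the $\Psi$-factor. Closely tied to this is the need to justify the ``peeling from the top'' identity --- that is, the self-similarity of the construction $(\Amb_n)_{n}$ --- at a level of rigor sufficient for the first-moment bound, being careful that working with an inclusion (and hence a possibly over-counting sum) is legitimate.
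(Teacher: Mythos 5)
Your argument is correct, but it is organized quite differently from the paper's. The paper does not re-run any induction for this corollary: it decomposes $\sum_{\zeta \in \Amb_n} f(\zeta)\, D\circ\tau_\zeta$ directly at level $n$ as $\sum_{\alpha\in\Nsp}\un(\pi(\alpha)\subset\Amb_n)\sum_{(x,t)\in\pi(\alpha)}f(x,t)\,D\circ\tau_{x,t}$, applies Campbell's theorem once, uses the stopping property of $\Amb_n$ (Lemma \ref{l:mesurabil-Amb}) against the $\sigma(\Nsp_{<t})$-measurability of $D\circ\tau_{x,t}$ to replace $D\circ\tau_{x,t}$ by $\E(D)$, reverses Campbell, and then invokes Proposition \ref{p:branch-ambig} as a black box. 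You instead redo the full induction of the proposition with the weight $D\circ\tau_\zeta$ carried along, using a top-peeling of the recursion ($\Amb_{n+1}$ as first generation $H$ plus $n$ further generations in shifted environments) rather than the bottom-peeling \eqref{e:contrib-parents} used in the proposition's proof; your decoupling step (stopping property of $H$ on the slab $[s,0[$ versus $\F_0$-measurability of $D$ and $\Amb_n$ pushed to $\,]-\infty,s[\,$ by the shift, with the Mecke atom landing at time $0$ and hence invisible to the second factor) is sound and is exactly the mechanism the paper uses inside the proposition's proof. What the paper's route buys is brevity and reuse of the proposition; what yours buys is a self-contained argument that yields the proposition as the special case $D\equiv 1$ and makes the self-similar branching structure of $(\Amb_n)_n$ explicit. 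The two points you flag as needing care --- the top-peeling inclusion (which in fact holds as an equality of sets, by a short induction using $\tau_{\eta}\circ\tau_{\zeta}=\tau_{\eta+\zeta}$, and in any case only the over-counting inequality is needed for non-negative summands) and the identification $\int \P(\alpha\in H(\Nsp+\delta_\alpha))\sum_{y\in A_i}h(z+y,s)\,dJ(\alpha)=\int h\,dM$ --- both check out, so there is no gap.
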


The proof makes use of the so-called refined Campbell theorem (see \cite{StoKenMec}), which we quote here in the special form we need:
\begin{theorem}\label{t:Campbell}
For any  measurable map $\Psi \ : \ (\Z^d \times \I \times \R) \times \Omega \to \R_+$, one has the following identity:
 $$\E \left( \sum_{\alpha \in \Nsp}   \Psi(\alpha, \Nsp)       \right) = \int  \E(  \Psi(\alpha, \Nsp \cup \{ \alpha \})            )       dJ(\alpha),$$
 where $J$ denotes the intensity measure of $\Nsp$.
\end{theorem}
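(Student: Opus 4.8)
The plan is to recognize Theorem~\ref{t:Campbell} as Mecke's equation (the ``refined Campbell theorem'' of \cite{StoKenMec}) for a Poisson point process, and to prove it by first treating a finite intensity measure and then lifting to the $\sigma$-finite measure $J$ at hand. Since every sum and integral below has a non-negative integrand, Tonelli's theorem applies throughout and no integrability hypothesis is needed; in particular both sides may equal $+\infty$ simultaneously.

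\emph{Finite case.} Write $E := \Z^d \times \I \times \R$ and suppose first $\lambda := J(E) < +\infty$. Then $\Nsp$ may be realized as $\{X_1,\dots,X_N\}$, where $N$ is Poisson$(\lambda)$-distributed, $(X_k)_{k\ge 1}$ are i.i.d.\ of law $\lambda^{-1}J$, and $N,(X_k)$ are independent; as $J$ has a Lebesgue-measure component in the time variable it is non-atomic, so the $X_k$ are a.s.\ pairwise distinct and this set has $N$ elements. Conditioning on $N$,
$$\E\Big(\sum_{\alpha\in\Nsp}\Psi(\alpha,\Nsp)\Big) = \sum_{n\ge 1}e^{-\lambda}\frac{\lambda^n}{n!}\sum_{k=1}^n \E\big(\Psi(X_k,\{X_1,\dots,X_n\})\big).$$
By exchangeability of $(X_1,\dots,X_n)$ the inner sum equals $n\,\E\big(\Psi(X_1,\{X_1,\dots,X_n\})\big)$, so after reindexing $m=n-1$ the right-hand side becomes $\lambda\sum_{m\ge 0}e^{-\lambda}\tfrac{\lambda^m}{m!}\E\big(\Psi(X_1,\{X_1,\dots,X_{m+1}\})\big)$. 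For each $m$, $X_1$ (of law $\lambda^{-1}J$) is independent of $\{X_2,\dots,X_{m+1}\}$, and the Poisson-weighted sum over $m$ recombines the latter into an independent copy $\Nsp'$ of $\Nsp$; hence, a.s.\ $\{X_1\}\cup\Nsp'$ having $1+|\Nsp'|$ distinct points, the expression equals $\lambda\,\E\big(\Psi(X_1,\{X_1\}\cup\Nsp')\big) = \int_E \E\big(\Psi(\alpha,\Nsp\cup\{\alpha\})\big)\,dJ(\alpha)$, which is the claim.

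\emph{The $\sigma$-finite case.} Partition $E=\bigsqcup_n E_n$ with $J(E_n)<+\infty$ and set $\Nsp_n:=\Nsp\cap E_n$, so the $\Nsp_n$ are independent Poisson processes with intensities $J(\,\cdot\,\cap E_n)$ and $\Nsp=\bigsqcup_n\Nsp_n$. Splitting the outer sum according to which $E_n$ contains $\alpha$, and writing $\Nsp^{(n)}:=\bigsqcup_{m\ne n}\Nsp_m$ (independent of $\Nsp_n$), one conditions on $\Nsp^{(n)}$ and applies the finite case to $\Nsp_n$ with the non-negative measurable function $(\alpha,\omega)\mapsto\Psi(\alpha,\omega\cup\Nsp^{(n)})$; averaging over $\Nsp^{(n)}$, summing over $n$, and using $\Nsp_n\cup\{\alpha\}\cup\Nsp^{(n)}=\Nsp\cup\{\alpha\}$ for $\alpha\in E_n$ gives $\sum_n\int_{E_n}\E(\Psi(\alpha,\Nsp\cup\{\alpha\}))\,dJ(\alpha)=\int_E\E(\Psi(\alpha,\Nsp\cup\{\alpha\}))\,dJ(\alpha)$.

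The computational core (the finite case) is elementary; the points needing care are the measurability bookkeeping — joint measurability of $(\alpha,\omega)\mapsto\Psi(\alpha,\omega\cup\{\alpha\})$ and legitimacy of the conditional expectations above — and the fact that $\Nsp\cup\{\alpha\}$ must lie in $\Omega$ a.s.\ (i.e.\ properties (i)--(iii) of Section~\ref{s:definitions} survive adding one point), which one checks is an almost sure property, or circumvents by defining $\Psi$ on all locally finite configurations. A cleaner organization of the measurability step is to establish the identity first for $\Psi$ of product form $\Psi(\alpha,\omega)=u(\alpha)\,v(\omega)$ — for which the finite-case computation is immediate — and then extend to general non-negative measurable $\Psi$ by a functional monotone-class argument. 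As this is a standard fact, in the paper we simply quote \cite{StoKenMec}.
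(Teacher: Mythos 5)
Your proof is correct, but note that the paper does not prove this statement at all: Theorem \ref{t:Campbell} is explicitly \emph{quoted} from \cite{StoKenMec} as a known special form of the refined Campbell (Mecke) theorem, so there is no in-paper argument to compare against. What you supply is the standard textbook derivation — reduce to finite total intensity, realize $\Nsp$ as a Poisson number of i.i.d.\ points of law $\lambda^{-1}J$, use exchangeability and the Poisson reindexing $n\mapsto n-1$ to split off one independent point, then pass to the $\sigma$-finite case by partitioning $E$ and conditioning on the restriction to the complementary cells — and all steps go through by Tonelli since $\Psi\ge 0$. You also correctly flag the two points that need care in this paper's specific setting: joint measurability of $(\alpha,\omega)\mapsto\Psi(\alpha,\omega\cup\{\alpha\})$ (handled by your monotone-class reduction to product-form $\Psi$), and the fact that $\Nsp\cup\{\alpha\}$ stays in the restricted configuration space $\Omega$ for $J$-a.e.\ $\alpha$, which holds because $J$ is non-atomic in the time coordinate and conditions (i)--(iii) are a.s.\ stable under adjoining a single point. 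In short: the proposal is a valid, self-contained proof of a result the authors chose to cite rather than prove.
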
 

A crucial property of the sets $\Amb_n$ is that they possess the stopping property, as stated in the following lemma.
\begin{lemma}\label{l:mesurabil-Amb}
For all $n \geq 0$, $\Amb_n$ has the stopping property, i.e. for all $t<0$, $\Amb_n \cap \Nsp_t$ is $\F_t-$measurable.
\end{lemma}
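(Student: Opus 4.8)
The plan is to prove this by induction on $n$, relying on just two facts: the stopping property of $H$ itself (part of the definition of a \ctwa), and the translation-covariance of the graphical construction. The base case $n=0$ is immediate, since $\Amb_0=\{(0,0)\}$ is deterministic, so its restriction to the time interval $[t,0[$ (which is empty) is trivially $\F_t$-measurable for every $t<0$. Here and below I read ``$\Amb_n\cap\Nsp_t$'' as the set of elements of $\Amb_n$ whose time-coordinate lies in $[t,0[$.

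The first real step is to upgrade the stopping property of $H$ to its translated copies: for any \emph{fixed} $(x,t_0)\in\Z^d\times ]-\infty,0]$ and any $s$ with $t\le s<t_0$, the event that a given triple $(z,i,s)$ belongs to $\tau_{x,t_0}^{-1}\circ H\circ\tau_{x,t_0}$ should be $\F_t$-measurable. I would obtain this by unwinding the composition: $(z,i,s)\in\tau_{x,t_0}^{-1}\circ H\circ\tau_{x,t_0}(\omega)$ iff $(z-x,i,s-t_0)\in H(\tau_{x,t_0}\omega)$, and since $s-t_0<0$ the stopping property of $H$ applied at level $s-t_0$ renders this measurable with respect to $\sigma\big(\Nsp(\tau_{x,t_0}\omega)\cap(\Z^d\times[s-t_0,0[\times\I)\big)$; by equivariance of $\Nsp$ under $\tau_{x,t_0}$ the latter $\sigma$-algebra equals $\sigma\big(\Nsp(\omega)\cap(\Z^d\times[s,t_0[\times\I)\big)$, which sits inside $\F_t$ because $t\le s$ and $t_0\le 0$. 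I would also record that any such $(z,i,s)$ is automatically an atom of $\Nsp$, since $H\subseteq\Nsp_T\subseteq\Nsp$.

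For the inductive step, fix $t<0$ and assume $\Amb_n$ has the stopping property. Unwinding \eqref{e:rec-amb}: every point of $H$ has strictly negative time, so a point $\zeta=(z+y,s)$ of $\Amb_{n+1}$ produced from $(x,t_0)\in\Amb_n$, $(z,i,s)\in\tau_{x,t_0}^{-1}\circ H\circ\tau_{x,t_0}$ and $y\in A_i$ satisfies $s<t_0\le 0$; hence, if $s\ge t$, its anchor $(x,t_0)$ lies in $\Amb_n\cap(\Z^d\times ]t,0])$, a set which is $\F_t$-measurable by the induction hypothesis (its part in $]t,0[$ directly, the only possible point at time $0$ being the deterministic $(0,0)$, which moreover only arises when $n=0$). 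I would then fix a candidate $\zeta=(w,s)$ with $s\in[t,0[$ and express $\{\zeta\in\Amb_{n+1}\}$ as the countable union, over $i\in\I$, over $y\in A_i$, and over anchors $(x,t_0)\in\Amb_n$, of the events $\{(x,t_0)\in\Amb_n\}\cap\{(w-y,i,s)\in\tau_{x,t_0}^{-1}\circ H\circ\tau_{x,t_0}\}$; each is $\F_t$-measurable by the induction hypothesis together with the anchor reformulation of the previous paragraph, and the union over anchors is effectively countable because $\Amb_n(\omega)$ is a countable set for each $\omega$. This yields the stopping property for $\Amb_{n+1}$ and closes the induction.

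The step I expect to be the main obstacle is the bookkeeping in this last paragraph: turning the union over the \emph{random} index set $\Amb_n$ into a legitimately measurable operation, and tracking correctly how the space-time shift $\tau_{x,t_0}$ transports the time-window on which $H$ is allowed to depend. The decisive observation is that a point of the shifted copy of $H$ anchored at $(x,t_0)$ and sitting at time $s\in[t,t_0[$ only ``sees'' $\Nsp$ over $[s,t_0[\subseteq[t,0[$, i.e.\ $\F_t$-data, so no information from strictly before time $t$ ever enters the description of $\Amb_{n+1}$ above time $t$.
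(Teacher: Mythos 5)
Your proposal is correct and follows essentially the same route as the paper: an induction on $n$ whose engine is the observation that the restriction of $H$ to a time window $[t,0[$ depends only on $\Nsp$ restricted to that window, transported to the shifted copies of $H$ anchored at points of $\Amb_n$, together with the fact that anchors of points of $\Amb_{n+1}$ above time $t$ themselves lie above time $t$. The joint-measurability and enumeration bookkeeping you flag as the main obstacle is exactly what the paper supplies via its Lemma \ref{l:progres} (the map $\Xi(t,\Nsp_t)$, measurable jointly in $t$ and $\omega$) and the measurable enumeration $\psi(x,i,k)$ of the Poisson points.
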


Before we prove Lemma \ref{l:mesurabil-Amb}, we need the following lemma.
\begin{lemma}\label{l:progres}
There exists a measurable map $\Xi$ from $\tilde{\Omega} \times ]-\infty,0[$ to $\tilde{\Omega}$ such that, for all $t <0$, 
$$H \cap \Nsp_t  = \Xi(t, \Nsp_t).$$
\end{lemma}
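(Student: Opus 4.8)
The plan is to use the functional representation (Doob--Dynkin) lemma to write $H\cap\Nsp_t$ as a measurable function of $\Nsp_t$ for each \emph{fixed} $t$, and then to assemble these fixed‑time representations into a single jointly measurable map by exploiting the consistency of the family $(\Nsp_t)_{t}$.

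First I would record the elementary facts that underlie the construction: for $t_1<t_2\le 0$ one has $\Nsp_{t_2}=\Nsp_{t_1}\cap(\Z^d\times[t_2,0[\times\I)$, so $\Nsp_{t_2}$ is a measurable function of $\Nsp_{t_1}$, and correspondingly $H\cap\Nsp_{t_2}=(H\cap\Nsp_{t_1})\cap\Nsp_{t_2}$. Since $(\tilde\Omega,\tilde\F)$ is a standard Borel space (it embeds as a Borel subset of the Polish space of locally finite configurations on $\Z^d\times\I\times\R$), and since, by the stopping property of $H$, the $(\tilde\Omega,\tilde\F)$‑valued random element $H\cap\Nsp_t$ is $\F_t=\sigma(\Nsp_t)$‑measurable, the functional representation lemma provides, for each $t<0$, a measurable map $\Xi_t\colon\tilde\Omega\to\tilde\Omega$ with $H\cap\Nsp_t=\Xi_t(\Nsp_t)$ (a genuine identity, not merely an almost sure one). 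Fixing a countable dense subset $D$ of $]-\infty,0[$, say $D=\Q\cap\,]-\infty,0[$, I would then set
$$\Xi(t,\mu):=\bigcup_{q\in D,\ q>t}\Xi_q\bigl(\mu\cap(\Z^d\times[q,0[\times\I)\bigr),\qquad t<0,\ \mu\in\tilde\Omega.$$
Each summand is jointly measurable in $(t,\mu)$ (for fixed $q$ the condition $q>t$ cuts out an interval of $t$'s, and $\mu\mapsto\Xi_q(\mu\cap(\Z^d\times[q,0[\times\I))$ is a composition of measurable maps), and a countable union of jointly measurable $\tilde\Omega$‑valued maps is again jointly measurable, so $\Xi$ is measurable. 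Taking $\mu=\Nsp_t$ and using $\Nsp_t\cap(\Z^d\times[q,0[\times\I)=\Nsp_q$ for $q>t$, one gets $\Xi(t,\Nsp_t)=\bigcup_{q\in D,\,q>t}(H\cap\Nsp_q)=H\cap\bigl(\Nsp\cap(\Z^d\times\,]t,0[\times\I)\bigr)$, which coincides with $H\cap\Nsp_t$ except possibly for the single point of $\Nsp$ whose time coordinate equals exactly $t$.

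The treatment of this boundary term is where I expect the only real difficulty of a fully detailed proof to lie. For each fixed $t$, the Poisson process $\Nsp$ almost surely has no point at time exactly $t$, and hence $\Xi(t,\Nsp_t)=H\cap\Nsp_t$ holds almost surely; since Lemma~\ref{l:mesurabil-Amb} and all later applications of Theorem~\ref{t:Campbell} only invoke fixed‑time measurability statements, this almost sure identity is all that is actually needed downstream. To obtain the everywhere identity one notes that, on the (a.s.\ empty, for fixed $t$) event that $\Nsp_t$ does possess a point $\alpha^{\ast}$ at time $t$ — in which case $\alpha^{\ast}$ is the unique minimal‑time point of $\Nsp_t$ and is a measurable functional of $\Nsp_t$ — the indicator $\un(\alpha^{\ast}\in H)=\un(\alpha^{\ast}\in H\cap\Nsp_t)$ is $\sigma(\Nsp_t)$‑measurable, so one can append $\alpha^{\ast}$ to $\Xi(t,\Nsp_t)$ whenever this indicator is $1$; carrying out this last adjustment in a way that remains jointly measurable in $(t,\mu)$ requires some routine but slightly tedious bookkeeping, which I would either include as a short supplementary argument or bypass by working throughout with the a.s.\ version of the identity.
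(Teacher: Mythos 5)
Your countable-gluing construction is a genuinely different route from the paper's, and its first stage is essentially sound: taking $\Xi_q$ for rational $q$ and forming $\bigcup_{q\in D,\,q>t}\Xi_q(\mu\cap(\Z^d\times[q,0[\times\I))$ does recover, jointly measurably in $(t,\mu)$, the set $H\cap\bigl(\Nsp\cap(\Z^d\times\,]t,0[\,\times\I)\bigr)$ (modulo two small repairs you should make explicit: replace $\Xi_q(\nu)$ by $\Xi_q(\nu)\cap\nu$ so that the union is a subset of $\mu$ and hence lands in $\tilde\Omega$, and justify measurability of the countable union by enumerating the points of $\mu$ and writing the counting functionals as suprema of indicators; you also lean on $\tilde\Omega$ being standard Borel for Doob--Dynkin, which is plausible but unverified). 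The genuine gap is the boundary atom. A point of $\Nsp_t$ with time coordinate exactly $t$ belongs to no $\Nsp_q$ with $q>t$, so no function of the truncations $\mu\cap(\Z^d\times[q,0[\times\I)$, $q\in D$, $q>t$, can detect whether it lies in $H$. Your proposed fix --- append $\alpha^{\ast}$ whenever $\un(\alpha^{\ast}\in H)=1$, using that this indicator is $\sigma(\Nsp_t)$-measurable --- only yields, for each \emph{fixed} $t$, a Doob--Dynkin representative $h_t(\Nsp_t)$ of that indicator. To fold this into $\Xi$ you need $(t,\mu)\mapsto h_t(\mu)$ to be \emph{jointly} measurable, and the separately chosen representatives $h_t$ carry no such joint measurability. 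This is not routine bookkeeping: assembling uncountably many fixed-$t$ factorizations into one jointly measurable map is precisely the difficulty the lemma is about, and your countable-density trick, which resolves it for the interior points, is unavailable for the atom at time $t$. Falling back on the a.s.-for-each-fixed-$t$ identity does not prove the lemma as stated, which is a pointwise identity for all $t$ and all $\omega$ (and the paper's convention is to avoid exceptional null sets precisely because the lemma is later applied after random space-time shifts).

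For comparison, the paper sidesteps Doob--Dynkin entirely. It first shows directly that $(t,\omega)\mapsto H\cap\Nsp_t$ is jointly measurable as a map $V$ on $\Omega\times\,]-\infty,0[$, by enumerating the points $\psi(x,i,k)$ of $\Nsp$ and expressing every counting functional $|H\cap\Nsp_t\cap B|$ as a countable sum of products of measurable indicators; the time-$t$ atom causes no trouble there because the condition is $s(x,i,k)\ge t$, handled uniformly in $t$. It then builds a measurable reconstruction map $\mathfrak{a}(\tilde\omega,t)$ that extends a truncated configuration to a full element of $\Omega$ by gluing a fixed $\omega_0$ shifted below time $t$, and uses the stopping property of $H$ (i.e.\ that $H\cap\Nsp_t$ is constant on level sets of $\Nsp_t$) to conclude $V(t,\Nsp)=V(t,\mathfrak{a}(\Nsp_t,t))=:\Xi(t,\Nsp_t)$. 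If you want to salvage your argument, the cleanest repair of the boundary term is exactly this reconstruction idea applied to the single atom at time $t$; at that point you have effectively reproduced the paper's proof.
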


\begin{proof}
We first prove that there exists a measurable map $V$ from $\Omega \times  ]-\infty,0[$ to  $\tilde{\Omega}$ such that, for all $t >0$, 
\begin{equation}\label{e:mesurabilite-infini}H \cap \Nsp_t  = V(t, \Nsp).\end{equation}
For each $(x,i) \in \Z^d \times \I$, let $(\psi(x,i,k))_{k \geq 1}$ denote the successive points $ \Nsp$ whose coordinate on $\Z^d \times \I$ is $(x,i)$, in decreasing order of the $\R-$coordinate. We let $\psi(x,i,k) =: (x,i, s(x,i,k))$. Given a Borel set $B$ of $ \Z^d \times \I \times \R$, one has that, for all $t<0$,   
$$| H \cap \Nsp_t  \cap B| = \sum_{x,i,k} \un(\psi(x,i,k) \in H) \un(s(x,i,k) \geq t) \un( \psi(x,i,k) \in B).$$
Since $H$ is a measurable map from $(\Omega,\F)$ to $(\tilde{\Omega}, \tilde{F})$, one has that 
$\omega \mapsto   \un(\psi(x,i,k) \in H)(\omega)$ is a measurable map from $(\Omega, \F)$ to $\R$.
This is also the case for $\omega \mapsto   \un(\psi(x,i,k) \in B)(\omega)$. Finally,  $(t, \omega) \mapsto (s(x,i,k) - t)$ is measurable from  
$\Omega \times  ]-\infty,0[$ to $\R$, so this is also the case for $ \un(s(x,i,k) \geq t)$. We conclude that $(\omega, t) \mapsto  H \cap \Nsp_t $ is 
measurable from $\Omega \times  ]-\infty,0[$ to  $\tilde{\Omega}$, whence the existence of $V$. 
Consider now an arbitrarily fixed element $\omega_0 \in \Omega$ that contains no point with $0$ $\R-$coordinate, and define the map  $\mathfrak{a}$ from $\tilde{\Omega} \times   ]-\infty,0[$ to $\Omega$  by
$$\mathfrak{a}(\tilde{\omega},t) := \left( \tilde{\omega} \cap (\Z^d \times \I \times [t,0[) \right) \cup \tau_{0,t}(\omega_0).$$
One checks that $\mathfrak{a}$ is measurable by writing$$| \mathfrak{a}(\omega,t)  \cap B     |  =  \sum_{x,i,k} \un(s(x,i,k)(\tilde{\omega}) \geq t) \un( \psi(x,i,k)(\tilde{\omega}) \in B)+|\tau_{0,t}(\omega_0) \cap B|,$$
where we have extended the definition of $\psi(x,i,k)$ to $\tilde{\Omega}$ in the obvious way, with the convention that $s(x,i,k)$  takes the value $-\infty$ when the value of $k$ excesses the number of points to be indexed.  Now, since, for any given $t<0$, $H \cap \Nsp_t $ is $\F_t-$measurable by assumption, one has that, in view of \eqref{e:mesurabilite-infini}, 
for any $t<0$, 
$$ V( t, \Nsp   ) = V(t, \mathfrak{a}( \Nsp_t, t )).$$
As a consequence, the conclusion of the proposition is achieved  by defining 
$$\Xi(t, \tilde{\omega}) :=   V(t, \mathfrak{a}( \tilde{\omega}, t )).$$
  \end{proof}

\begin{proof}[Proof of Lemma \ref{l:mesurabil-Amb}]
We re-use the notations introduced in the proof of Lemma \ref{l:progres}.
The proof is by induction. For $n=0$, the result is obvious since $\Amb_0 := \{ (0,0) \}$.
Now assume the result to be true for a given $n \geq 0$. 
Define $D_{n}:= \{   (w,i,k) ;        \  \pi(\psi(w,i,k)) \cap \Amb_{n} \neq \emptyset           \}$. 
By definition, one has
\begin{equation}\label{e:redef-Amb}\Amb_{n+1} :=   \bigcup_{(x,i,k) \in D_n}  \pi \left( \tau_{x,s(x,i,k)}^{-1} \circ H \circ \tau_{x,s(x,i,k)} \right),\end{equation}
with the slight abuse of notation that, given a subset $C$ of $\Z^d \times \I \times \R$,  $\pi(C) := \bigcup_{c \in C} \pi(c)$.     
 
Now consider $t<0$. From Lemma \ref{l:progres}, we deduce that the map on $\Omega \times [t,0]$ defined by
$(\omega, s) \mapsto  \left( \tau_{x,s}^{-1} \circ H \circ \tau_{x,s} (\omega) \right) \cap \Nsp_t(\omega)$
is $\F_t \otimes \mathcal{B}([t,0])-$measurable. On the other hand, our induction hypothesis shows that for any $(x,i,k)$, 
the event $\{ (x,i,k) \in D_n ,    \  s(x,i,k) \geq t  \}$ is $\F_t-$measurable. We can then deduce from \eqref{e:redef-Amb} that 
$\Amb_{n+1} \cap \Nsp_t$ is $\F_t-$measurable.
\end{proof}

\begin{proof}[Proof of Proposition \ref{p:branch-ambig}]
We proceed by induction. For $n=0$, the result is immediate since by definition $\Amb_0 := \{ (0,0) \}$, while, for $n=1$, the result is a direct consequence of $M$ being the intensity measure of $\Amb_1$. 
Now consider $n \geq 1$, and note that, by definition, 
\begin{equation}\label{e:contrib-parents}\sum_{\zeta \in \Amb_{n+1} } f(\zeta)   \leq \sum_{(x,t) \in \Amb_n}  g_{x,t},\end{equation}
with 
$$g_{x,t} := \sum_{(z,i,s) \in \tau_{x,t}^{-1} \circ H \circ \tau_{x,t}} \sum_{y \in A_i}  f(z+y,i,s).$$

For $(x,i,t) \in \Z^d \times \I \times \R$, define $\pi(x,i,t) := \bigcup_{z \in A_i} \{ (x+z,t) \}$. Now we rewrite 
$$  \sum_{(x,t) \in \Amb_n}  g_{x,t}  = \sum_{\alpha \in \Nsp}   \Psi( \alpha, \Nsp  ), $$
with $$\Psi(\alpha, \omega) := \un (\pi(\alpha)\subset \Amb_n(\omega))  \sum_{(x,t) \in \pi(\alpha)}  g_{x,t}(\omega).$$
Applying Campbell's theorem (Theorem \ref{t:Campbell}), we deduce that 
\begin{equation}\label{e:Campbell-1} \E \left( \sum_{\zeta \in \Amb_{n+1} } f(\zeta) \right) \leq   \int  \E(  \Psi(\alpha, \Nsp \cup \{ \alpha \})            )       dJ(\alpha).\end{equation}
No we deduce from Lemma \ref{l:mesurabil-Amb} that, for all $\alpha=(w,j,t) \in \Z^d \times \I \times \R$ such that $t<0$, the event 
$\{ \pi(\alpha) \subset \Amb_n(\Nsp \cup \{ \alpha \}) \}$ is $\F_t-$measurable. On the other hand, for all $(x,t) \in \pi(\alpha)$, 
the random variable $g_{x,t}$ is measurable with respect to $\sigma(\Nsp_{<t})$, where 
$\Nsp_{<t} := \Nsp \cap (\Z^d \times \I \times ]-\infty, t [)$. As a consequence, $ \un (\pi(\alpha)\subset \Amb_n(\Nsp \cup \{ \alpha \}) $ and $ \sum_{(x,t) \in \pi(\alpha)}  g_{x,t}(\Nsp \cup \{ \alpha \})$ are independent. Moreover, one has that 
$$\E \left(   \sum_{(x,t) \in \pi(\alpha)}  g_{x,t}(\Nsp  \cup \{ \alpha \} )  \right) = \sum_{(x,t) \in \pi(\alpha)} \varphi(x,t),$$ with 
$$\varphi(x,t) :=  \int f(\zeta) d (\delta_{(x,t)} \star M)(\zeta).$$ 
Thus
\begin{equation} \label{e:indep-temps} \E(  \Psi(\alpha, \Nsp \cup \{ \alpha \})            )   =  \E \left(  \un(  \pi(\alpha) \subset \Amb_n(\Nsp \cup \{ \alpha \})  ) \times  \sum_{(x,t) \in \pi(\alpha)}    \varphi(x,t) \right) ,\end{equation}
Applying again Campbell's theorem, 
we deduce from \eqref{e:indep-temps} that 
\begin{eqnarray*} \int  \E(  \Psi(\alpha, \Nsp \cup \{ \alpha \})            )       dJ(\alpha) &=& \E \left(      \sum_{\alpha \in \Nsp}     \un(  \pi(\alpha) \subset \Amb_n)   \sum_{(x,t) \in \pi(\alpha)}    \varphi(x,t)         \right)   \\ &=& \E \left( \sum_{\zeta \in \Amb_n}  \varphi(\zeta) \right).\end{eqnarray*}
Assuming the conclusion of the proposition to be true for $n$, we deduce that 
$$ \int  \E(  \Psi(\alpha, \Nsp \cup \{ \alpha \})            )       dJ(\alpha)   =  \int \varphi(\zeta) d M^{\star n}(\zeta) = \int f(\zeta) d M^{\star (n+1)}(\zeta).$$
In view of \eqref{e:Campbell-1}, this establishes the conclusion of the proposition for $n+1$.
\end{proof}

\begin{proof}[Proof of Corollary \ref{c:branch-ambig}]
For $n=0$ the result is immediate. For $n \geq 1$, 
$$\sum_{\zeta \in \Amb_n} f(\zeta) \cdot D \circ \tau_{\zeta} = \sum_{\alpha \in \Nsp}   \Psi'(\alpha, \Nsp),$$
with
$$  \Psi'(\alpha,\omega) :=  \un(\pi(\alpha) \subset \Amb_n(\omega)) \sum_{(x,t) \in \pi(\alpha)} g'_{x,t}(\omega),$$
$$g'_{x,t} :=  f(x,t)  \cdot  D \circ \tau_{x,t}(\omega).$$
Then Campbell's theorem  shows that  
$$\E\left( \sum_{\zeta \in \Amb_n} f(\zeta) \cdot D \circ \tau_{\zeta} \right)   =     \int  \E(  \Psi'(\alpha, \Nsp \cup \{ \alpha \})            )       dJ(\alpha).$$
As in the proof of Proposition \ref{p:branch-ambig}, given
$\alpha=(w,j,t) \in \Z^d \times \I \times \R$ such that $t<0$, the event 
$\{ \pi(\alpha) \subset \Amb_n(\Nsp \cup \{ \alpha \}) \}$ is $\F_t-$measurable while, for all $(x,t) \in \pi(\alpha)$, 
the random variable $g'_{x,t}$ is measurable with respect to $\sigma(\Nsp_{<t})$.
Moreover, $$\E \left( g'_{x,t} (\Nsp \cup \{ \alpha \} \right)) =  f(x,t) \cdot  \E(D),$$
so that 
$$ \E(  \Psi'(\alpha, \Nsp \cup \{ \alpha \})            )   = \E \left(  \un(\pi(\alpha) \subset \Amb_n(\omega)) \sum_{(x,t) \in \pi(\alpha)} f(x,t) \cdot  \E(D)    \right).$$
Another application of Campbell's theorem yields that 
\begin{eqnarray*} \int  \E(  \Psi'(\alpha, \Nsp \cup \{ \alpha \})            )  dJ(\alpha)
&=&  \E(D) \cdot    \E \left(      \sum_{\alpha \in \Nsp}     \un(  \pi(\alpha) \subset \Amb_n)   \sum_{(x,t) \in \pi(\alpha)}    f(x,t)     \right)   \\ &=& \E(D)  \cdot \E \left( \sum_{\zeta \in \Amb_n}  f(\zeta) \right).\end{eqnarray*}
 Proposition \ref{p:branch-ambig} then yields the conclusion.
\end{proof}

\begin{proof}[Proof of Theorem \ref{t:finitude}]
Assume that $\g < 1$,  and note that, by definition, one has 
$\int d M(\zeta) = \g$. We now use Proposition \ref{p:branch-ambig} with $f \equiv 1$, and obtain that, for all $n \geq 0$, $\E(|\Amb_n|) \leq \g^n$. 
As a consequence, 
$$\E(|\Amb_{\infty}|) \leq \E \left( \sum_{n \geq 0}  |\Amb_n| \right) = \sum_{n \geq 0} \E \left(   |\Amb_n| \right) \leq \sum_{n \geq 0} \g^n < +\infty.$$
It is now clear that $\P (| \Amb_{\infty}|<+\infty)=1$.
Similarly, applying Corollary \ref{c:branch-ambig} with $f \equiv 1$ and $D := \un(T=-\infty)$ yields that, for all $n \geq 0$, 
$$\E \left( \sum_{(x,t) \in \Amb_n }     \un(T \circ \tau_{x,t} = -\infty) \right)  = 0.$$
As a consequence,  with probability one, 
 $T \circ \tau_{x,t} > -\infty$ for all $(x,t) \in \Amb_{\infty}$.  
We have thus proved that $\P(T^* > -\infty) =1$. The conclusion of the theorem is now a consequence of Proposition \ref{p:base-ambig}.
\end{proof}

\begin{proof}[Proof of Theorem \ref{t:temps-exp}]
We apply  Corollary \ref{c:branch-ambig} with $f(x,t) \equiv \exp(\lambda t)$ and $D := \exp(\lambda T)$.
As a result, for all $n \geq 0$, 
$$\E \left( \sum_{  (x,t) \in \Amb_n }     e^{\lambda (t+ T \circ \tau_{x,t})}          \right) \leq \E\left( e^{\lambda T} \right)  \left( \int  e^{\lambda t} dM^{\star n}(x,t)  \right).$$ 
One has $$\int  e^{\lambda t} dM^{\star n}(x,t) =    \left( \int  e^{\lambda t} dM(x,t)  \right)^n = (\Lambda_{H,time}(\lambda))^n.$$
Summing over $n \geq 0$, we obtain that 
$$\E \left( \sum_{  (x,t) \in \Amb_{\infty} }     e^{\lambda (t+ T \circ \tau_{x,t})}          \right)  \leq \sum_{n \geq 0} \E\left( e^{\lambda T} \right)  (\Lambda_{H,time}(\lambda))^n = \frac{ \E\left( e^{\lambda T} \right) }{1 - \Lambda_{H,time}(\lambda)}.$$
Now by definition of $T^*$, using the fact that $\lambda < 0$, $$\E(\exp(\lambda T^*)) \leq \E \left( \sum_{  (x,t) \in \Amb_{\infty} }     e^{\lambda (t+ T \circ \tau_{x,t})}          \right).$$
The conclusion follows.
\end{proof}

\begin{proof}[Proof of Theorem \ref{t:espace-exp} (sketch)]
First note that, when $\g<1$, $L^*_+$ and $L^*_-$ are a.s. finite, using an argument similar to the one establishing that $T^*$ is a.s. finite in the proof of Theorem \ref{t:finitude}. Moreover, the proof of the estimates on $\E(\exp(\lambda L^*_+))$ and $\E(\exp(-\lambda L^*_-))$ is completely similar to the proof of Theorem \ref{t:temps-exp}. It remains to prove that $L^*$ indeed bounds the width of $T^*$. This is done by adapting the proof of Proposition \ref{p:base-ambig} as follows. We work on the event $T^*>-\infty$.  First note that, thanks to the fact that 
$L$ defines a stopping box, $L$ is $\F^{-L', L'}-$measurable for any  random variable such that 
$L \leq L'$. Then observe that  $\Amb_{\infty}$ is measurable with respect to $\F^{-L^*,L^*}$, since, for each $n \geq 0$ and $(x,t) \in \Amb_{n}$, one has that   
$L^*_- \leq    x - L  \circ \tau_{x,t} \leq       x + L \circ \tau_{x,t}   \leq L^*_+$. We now start the induction with the case $|\Amb_{\infty}|=1$. Then the values of the $e(\alpha, T, \xi)$ for $\alpha \in H$ are completely determined by $H$ itself, and $H$ is measurable with respect to $\F^{-L, L}$, so we are done. If   $|\Amb_{\infty}|=n+1$, we apply the induction hypothesis to every  $|\Amb_{\infty} \circ \tau_{x,t}|$ such that $x=z+y$ for some $\alpha=(z,i,t) \in \Amb_1$ and $y \in A_i$, then deduce that $[\Phi_{T^*}^{0-}(\xi)](0)$ has the required measurability properties. 
\end{proof}

\section{Proof of Theorem \ref{t:theoreme-principal}}\label{s:preuve-2}

We start with a proposition showing that $(T,H)$ is a CFTP time with ambiguities for the perturbed dynamics if $\XX_{\infty}$ is a finite set $\P-$a.s.

\begin{prop}\label{p:base-perturb}
If $\P(|\XX_{\infty}|<+\infty)=1$, then $(T, H)$ is a CFTP time with ambiguities.
\end{prop}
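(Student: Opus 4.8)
The plan is to verify the three requirements in the definition of a coupling time with ambiguities: (a) that $T$ is negative and a.s.\ finite, and $H$ is a.s.\ finite with $H\subset\Nsp_T$ on $\{T>-\infty\}$; (b) that $H$ has the stopping property; and (c) the modified coupling relation \eqref{e:coupl-ambig}.

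\textit{Finiteness.} First observe that $\TT$ is locally finite: a vertex $a$ has one son when $i_a\in\I^u$, and $|f_{i_a}(\S^{A_{i_a}})|\le|\S|$ sons when $i_a\in\I^p$. Along any branch $r=a_0,a_1,\dots$ each step adjoins to $\XX_{a_n}$ a single point with time coordinate $t_{a_n}$, and since $t_a<\gamma_a$ and $\gamma_{a_{n+1}}=t_{a_n}$ one has $t_{a_0}>t_{a_1}>\cdots$, so these points are pairwise distinct and an infinite branch would force $|\XX_\infty|=+\infty$. Hence, on $\{|\XX_\infty|<+\infty\}$, the tree $\TT$ has no infinite branch and, being locally finite, is finite by K\"onig's lemma. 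Therefore $T=\inf\{t:(x,i,t)\in\XX_\infty\}$ is the minimum of a finite nonempty set of negative reals, so $-\infty<T<0$; $H$ is finite; and each $(x_a,i_a,t_a)\in H$ is a point of $\Nsp$ with $T\le t_a<0$, since $a\in\TT'$ has a son $b$ with $\gamma_b=t_a$, so $t_a$ occurs as a time coordinate of a point of $\XX_b\subset\XX_\infty$. Thus $H\subset\Nsp_T$.

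\textit{Stopping property.} The construction reveals points of $\Nsp$ in decreasing order of time along each branch, and the rule applied at a vertex $a$ is a function of $\XX_a$, i.e.\ of points already revealed. By an induction on generations analogous to the proof of Lemma \ref{l:mesurabil-Amb}, one shows that for each $t<0$ the vertices $a$ with $t_a\ge t$, their labels $\XX_a$ and their revealed points are $\F_t$-measurable (a generation-$(n{+}1)$ vertex with $t_a\ge t$ has its parent among the generation-$n$ vertices already known to $\F_t$, and $\n(B_a,\gamma_a)$ restricted to times $\ge t$ is $\F_t$-measurable). Taking the a.s.\ finite union over generations yields that $H\cap\Nsp_t$ is $\F_t$-measurable, and that $H$ is a bona fide random subset of $\Nsp$.

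\textit{Coupling relation.} Fix $\xi\in\S^{\Z^d}$, work on $\{|\XX_\infty|<+\infty\}$, and set $T_\ell:=\inf\{t:(x,i,t)\in\XX_\ell\}$. I would first single out the \emph{realized branch} from $\xi$: starting at the root, at a vertex $a$ with $i_a\in\I^u$ follow its unique son, and at a vertex $a$ with $i_a\in\I^p$ follow the son $b_v$ with $v:=e\big((x_a,i_a,t_a),T,\xi\big)$ --- legitimate because the output of a transition with rule $i_a$ always lies in $f_{i_a}(\S^{A_{i_a}})$ by \eqref{e:graph-construct} --- and, $\TT$ being finite, this path terminates at a leaf $\ell(\xi)$. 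Each branch-point on the path lies in $\TT'$, so the successive values $v$ read off along it are exactly the numbers $e(\alpha,T,\xi)$ for a sequence of $\alpha\in H$ selected deterministically step by step; hence $\ell(\xi)$, and therefore $\XX_{\ell(\xi)}$, is a deterministic function of $\big(e(\alpha,T,\xi)\big)_{\alpha\in H}$, so $\XX_{\ell(\xi_1)}=\XX_{\ell(\xi_2)}$ whenever $e(\alpha,T,\xi_1)=e(\alpha,T,\xi_2)$ for all $\alpha\in H$. It then remains to see that $[\Phi_T^{0-}(\xi)](0)$ is a function of $\XX_{\ell(\xi)}$. For a leaf $\ell$ let $\Nsp^{(\ell)}$ be obtained from $\Nsp$ by relabelling, at each branch-point $a$ on the path to $\ell$ with $i_a\in\I^p$, the point $(x_a,i_a,t_a)$ to $(x_a,\iota_{v_a},t_a)$, and deleting every remaining point with index in $\I^p$; then $\Nsp^{(\ell)}\in\Omega^u$ (relabelling and deletion leave infinitely many points of each $\I^u$-type and can only shorten the back-chains in condition~(iii)), and the $\theta$-exploration of $\Nsp^{(\ell)}$ reproduces $\XX_\ell$ --- at each step along the path the point revealed in $\Nsp$ has the largest time coordinate in its box, so neither relabelling it nor deleting lower-time points changes which location $\n$ selects, and $\theta(\XX_\ell)=\emptyset$ stops the exploration at $\ell$. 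Writing $\Phi^{(\ell)}$ for the flow under the unperturbed dynamics $(\RR_i)_{i\in\I^u}$ induced by $\Nsp^{(\ell)}$, the defining property of the exploration process attached to $T^u$ then gives that $\big[(\Phi^{(\ell)})_{s}^{0-}(\zeta)\big](0)$ equals, for every $s\le T_\ell$ and every $\zeta$, a fixed value $w(\XX_\ell)$ depending only on $\XX_\ell$. Finally, on the realized branch the perturbed flow on $\Nsp$ started from $\xi$ at time $T-$ and $\Phi^{(\ell(\xi))}$ started from $\xi$ at time $T-$ produce the same state at site $0$ at time $0$: each relabelled rule $\iota_{v_a}$ unconditionally outputs $v_a=e\big((x_a,i_a,t_a),T,\xi\big)$, exactly the value the original perturbative transition produces, while the perturbative transitions of $\Nsp$ deleted in passing to $\Nsp^{(\ell(\xi))}$ do not affect site $0$ --- this last identity being established by an induction on $|\XX_\infty|$ in the spirit of the proof of Proposition \ref{p:base-ambig}. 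Thus $[\Phi_T^{0-}(\xi)](0)=w(\XX_{\ell(\xi)})$, which depends on $\xi$ only through $\big(e(\alpha,T,\xi)\big)_{\alpha\in H}$, which is \eqref{e:coupl-ambig}.

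\textit{Main obstacle.} The delicate step is the final identity, that the perturbed flow on $\Nsp$ agrees at site $0$ with the unperturbed flow on the relabelled configuration $\Nsp^{(\ell(\xi))}$. It couples two facts that must be run together in the induction --- the faithful reproduction, on $\Nsp^{(\ell(\xi))}$, of the unperturbed exploration process, and a pathwise comparison of the two flows on the explored region --- and it is here that the full content of ``CFTP time associated with an exploration process'' (notably the measurability of the coupled value with respect to $\X^u_\infty$) is used. A secondary technical point is that this defining property of $T^u$ holds a priori only $\P^u$-almost surely, whereas we apply it to the possibly atypical configurations $\Nsp^{(\ell)}$; this is harmless because, once the exploration is finite, both the coupled value and the exploration itself are determined by a finite portion of the configuration.
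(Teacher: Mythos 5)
Your overall architecture is the same as the paper's: follow the ``realized branch'' of $\TT$ determined by the values $e(\alpha,T,\xi)$, observe that the terminal label $\XX_{\ell(\xi)}$ is a function of $\big(e(\alpha,T,\xi)\big)_{\alpha\in H}$ alone, and then invoke the measurability of the coupled value with respect to $\X^u_\infty$ for a suitably modified unperturbed configuration. The finiteness and stopping-property parts are fine (the paper dismisses them in one line). The problem is exactly the step you flag as the ``main obstacle'', and your proposed way of closing it would not work as stated. You delete from $\Nsp$ every perturbative point not lying on the realized branch, and then claim that these deletions ``do not affect site $0$'', to be shown ``by an induction on $|\XX_\infty|$ in the spirit of Proposition \ref{p:base-ambig}''. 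But this is not a pathwise fact about the two flows: a deleted perturbative transition genuinely changes the state at its own site, and that change can propagate to site $0$ through subsequent unperturbed transitions. The only reason site $0$ is unaffected is that the coupled value is measurable with respect to the explored set --- i.e.\ it is a consequence of the very property you are trying to set up, not something you can establish beforehand by a separate pathwise induction. As written, the argument is circular at this point, or at least leaves its crucial step unproved.

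The paper avoids the difficulty by a different choice of modified configuration $\Qsp$: \emph{every} point $\alpha=(x,i,s)\in\Nsp$ with $s\ge T$ and $i\in\I^p$ is replaced by $(x,\iota_{e(\alpha,T,\xi)},s)$ (only the perturbative points with $s<T$ are deleted). Since $\iota_v$ is an unconditional rule producing $v$, and $e(\alpha,T,\xi)$ is by definition the value the original transition produces when the system is started from $\xi$ at time $T-$, the unperturbed flow on $\Qsp$ started from $\xi$ at time $T-$ coincides with the perturbed flow on $\Nsp$ \emph{at every site and every time in $[T,0)$}; the identity $[\Phi_T^{0-}(\xi)](0)=[h_T(\xi)](\Qsp)$ is then immediate, with no induction needed. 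One then checks, exactly as you do for your $\Nsp^{(\ell)}$, that the $\theta$-exploration of $\Qsp$ reproduces $\XX_{c_m}$ (the off-path relabelled points are harmless because the exploration always selects the highest-time point in its box, which survives the relabelling), so that $T^u(\Qsp)\ge T$ and $[h_T(\xi)](\Qsp)=G(\X^u_\infty(\Qsp))=G(\XX_{c_m})$. If you replace your $\Nsp^{(\ell)}$ by $\Qsp$, your proof goes through and the ``delicate step'' disappears.
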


\begin{proof}
First note that the stopping property of $\XX_{\infty}$ is a direct consequence of the way the process is constructed. We now work on the event that $|\XX_{\infty}|$ is finite. Consider $\xi \in \S^{\Z^d}$, and let $\Qsp$ be the element of $\Omega^u$ obtained from $\Nsp$ by replacing any 
$\alpha =(x,i,s) \in \Nsp$ such that $s \geq T$ and $i \in \I^p$ by $(x,\iota_{e(\alpha,T,\xi)},s)$, and suppressing  any   $\alpha =(x,i,s) \in \Nsp$ such that $s < T$ and $i \in \I^p$.  Let $h_t(\xi)$ denote the random variable defined on $\Omega^u$ by $$h_t(\xi) := [(\Phi^u)_{t}^{0-}(\xi)](0),$$
where $\Phi^u$ denotes the stochastic flow defined by $\Nsp^u$ on $\Omega^u$. From the definition of the dynamics, we see that 
\begin{equation}\label{e:premiere-substitution}[\Phi_T^{0-}(\xi)](0) = [h_T(\xi)](\Qsp).\end{equation} Now define a path $c_0,\ldots, c_m$ in $\TT$ as follows. Start with $c_0:=r$. Then assume that $c_0,\ldots, c_k$ have been defined. If $c_k$ has no son in $\TT$, the path ends at $c_k$, so that $m:=k$. If $c_k$ has a single son in $\TT$, then $c_{k+1}$ is defined to be this single son. Finally, if $c_k$ has several sons in $\TT$, $c_{k+1}$ is defined to be the son associated with the value $v:=e((x_{c_k}, i_{c_k}, t_{c_k}),T,\xi)$. 
By definition of the exploration processes $\X^u$ and $\XX$, one has that $\X^u_{\infty}(\Qsp) =  \XX_{c_m}$. Then by definition $T^u(\Qsp) = \inf \{ t;   \  (x,i,t)  \in    \X^u_{\infty}(\Qsp)      \}$, so that the identity $\X^u_{\infty}(\Qsp) =  \XX_{c_m}$ implies that $T^u(\Qsp) \geq T$. From the fact that $T^u$ is associated with the exploration process specified by $\theta$, there exists a measurable map $G$ defined on the set $\Omega^u_f$ such that, on the event $\{ T^u > -\infty \} = \{  |\X^u_{\infty}|<+\infty \}$, one has, for every $\chi \in \S^{\Z^d}$,  $h_{T^u}(\chi) = G(\X^u_{\infty})$.
As a consequence,  
 
\begin{equation}\label{e:seconde-substitution}[h_T(\xi)](\Qsp) = G( \XX_{c_m} ).\end{equation} 
It is now immediate from \eqref{e:premiere-substitution} and \eqref{e:seconde-substitution} that 
if $e(\alpha, \xi_1, T) = e(\alpha, \xi_2, T)$ for every $\alpha \in H$,  then $[\Phi_T^{0-}(\xi_1)](0) = [\Phi_T^{0-}(\xi_2)](0)$, since both values of $ \XX_{c_m}$ obtained 
starting from $\xi_1$ or $\xi_2$ are identical.
\end{proof}

The next two propositions are the key first-moment estimates needed to control $\XX_{\infty}$.

Define a kernel $K$ on $\Omega^u_f$ as follows. 
If $\theta(X) = \emptyset$, then $K(X, \cdot) = \delta_{X}(\cdot)$. 
If $\theta(X) \neq \emptyset$, let $s :=  \inf \{ t;   \   (x,i,t) \in X  \}$, and 
let $$d K(X, X \cup \{ (x,i,t)     \})   =   r_i \exp(|\theta(X) | \r_u (t-s))   \un(t < s) \un(x \in \theta(X)) dJ^u(x,i,t),$$ 
with $\r_u := \sum_{j \in \I^u} r_j$. From the definition, one has the following.
\begin{prop}\label{p:Markov} The sequence $(\X^u_{\ell})_{\ell \geq 0}$ is a Markov chain on $\Omega^u_f$ with initial state $\emptyset$ and transition kernel $K$. 
\end{prop}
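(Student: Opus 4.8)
The plan is to verify directly from the construction of the exploration process $\X^u$ that the described transition mechanism is exactly captured by the kernel $K$, and that the only extra randomness entering between steps is a fresh exponentially distributed waiting time coming from the Poisson structure. The point is that, in $\Omega^u$, conditionally on $\X^u_\ell$ and hence on $B_\ell := \theta(\X^u_\ell)$ and on $\gamma_\ell = s$, the next point $\n(B_\ell, s)$ of $\Nsp^u$ to be selected is, by the refined Campbell theorem (Theorem~\ref{t:Campbell}) applied to the Poisson process $\Nsp^u$, the point of $\Nsp^u \cap (B_\ell \times \I^u \times (-\infty,s))$ with the largest time-coordinate; its law is precisely the one obtained by exponential races over the sites $x \in B_\ell$ and rules $i \in \I^u$, which is exactly what the density $r_i \exp(|B_\ell|\,\r_u(t-s))\un(t<s)\un(x\in B_\ell)\,dJ^u(x,i,t)$ encodes (here the factor $|\theta(X)|\r_u$ is the total rate $\sum_{x\in B_\ell}\sum_{i\in\I^u} r_i$, and the normalization is automatic since $\int r_i\exp(|B_\ell|\r_u(t-s))\un(t<s)\,dJ^u(x,i,t)$ over $x\in B_\ell$, $i\in\I^u$, $t<s$ equals $1$).

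The key steps, in order, are as follows. First, recall that $B_\ell = \theta(\X^u_\ell)$ is a deterministic measurable function of $\X^u_\ell$, so that the transition from $\X^u_\ell$ to $\X^u_{\ell+1}$ depends on the past only through $\X^u_\ell$; this gives the Markov property once we identify the conditional law. Second, treat the degenerate case $\theta(X) = \emptyset$: then by construction $\X^u_{\ell+1} = \X^u_\ell$ and $\gamma_{\ell+1} = \gamma_\ell$, which matches $K(X,\cdot) = \delta_X$. Third, for $\theta(X) \neq \emptyset$ with $s = \gamma_\ell = \inf\{t : (x,i,t)\in X\}$, use the strong Markov property of the Poisson process $\Nsp^u$ with respect to the filtration $\F^u_s$ (noting that the event $\{\gamma_\ell = s\}$ and $\X^u_\ell$ are $\F^u_s$-measurable, which follows by the same kind of induction as in Lemma~\ref{l:mesurabil-Amb}) to conclude that $\Nsp^u \cap (\Z^d \times \I^u \times (-\infty,s))$ is, conditionally on $\F^u_s$, again a Poisson process with intensity $J^u$ restricted to times below $s$; the point $\n(B_\ell, s)$ is then its a.s.\ unique highest-time point inside $B_\ell \times \I^u \times (-\infty,s)$. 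Fourth, compute the law of this highest point: conditioning on its existence and position is the classical "first point of a Poisson process" computation, yielding the stated density. Finally, assemble: $\X^u_{\ell+1} = \X^u_\ell \cup \{\n(B_\ell,\gamma_\ell)\}$ and $\gamma_{\ell+1}$ is its time-coordinate, so the one-step conditional law of $\X^u_{\ell+1}$ given $\X^u_\ell = X$ is exactly $K(X,\cdot)$.

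The main obstacle I expect is a bookkeeping one rather than a conceptual one: making precise the measurability of $\X^u_\ell$ and $\gamma_\ell$ with respect to $\F^u_{\gamma_\ell}$ (a random time, so one must either argue via the discrete increasing sequence of stopping times $\gamma_0 > \gamma_1 > \cdots$ and apply the strong Markov property at each, or invoke the exploration-process induction analogous to Lemma~\ref{l:mesurabil-Amb}), and then carefully normalizing the exponential-race density so that it integrates to one over the finite set $B_\ell \times \I^u$ — the weight $\exp(|\theta(X)|\r_u(t-s))$ is precisely the survival probability that no point of $\Nsp^u$ fell in $B_\ell \times \I^u \times (t,s)$, so $dK(X,\cdot)$ is a genuine probability kernel. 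Since the statement only asserts the Markov property with the given kernel and initial state, and the initial state $\X^u_0 = \emptyset$ is immediate from the definition, once these measurability and normalization points are settled the proof is complete; I would present it as a short verification "from the definition," exactly as the paper's phrasing ("From the definition, one has the following") suggests.
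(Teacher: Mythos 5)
Your proof is correct and matches the paper's intent exactly: the paper offers nothing beyond ``From the definition, one has the following,'' and your verification (strong Markov property of the Poisson process at the decreasing stopping times $\gamma_\ell$, measurability of $\X^u_\ell$ with respect to the corresponding $\sigma$-algebra, and the first-point-of-a-Poisson-process computation identifying $K$ as the exact one-step conditional law) is the standard fleshing-out of that claim. One small remark: your normalization integral equals $1$ only if $dJ^u$ in the formula for $K$ is read as the base measure $c_{\Z^d}\otimes c_{\I^u}\otimes \ell_{\R}$ rather than the intensity measure already carrying the weight $r_i$ (otherwise $r_i$ is counted twice); this is a notational glitch in the paper's definition of $K$ that you have faithfully inherited, and your correct identification of $|\theta(X)|\,\r_u$ as the total rate $\sum_{x\in B_\ell}\sum_{i\in\I^u}r_i$ shows you have the right computation in mind.
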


Now define a kernel $\hat{K}$ on  the set of elements $X \in \Omega^u_f$ such that $\theta(X) \neq \emptyset$ as follows.  
Let $s := \inf \{ t;   \   (x,i,t) \in X  \}$, and 
let $$d \hat{K}(X, X \cup \{ (x,i,t)     \})   =   \hat{r}_i \exp(|\theta(X) | \r (t-s))  \un(t < s) \un(x \in \theta(X)) dJ^u(x,i,t),$$
with, for $i \in \I^u$,  
$\hat{r}_i :=  r_i +   \sum_{j \in \I^p } \sum_{ v \in f_j(A_j)  } r_j \un(\iota_v = i)$,       
and
$\r := \sum_{j \in \I} r_j$.
Define also the kernel $L$ by 
$$dL(X, (x,i,t)) =   r_i \exp(|\theta(X) | \r (s-t))  \un(t < s) \un(x \in \theta(X))  dJ(x,i,t).$$

For $\ell \geq 0$, let $\TT_{\ell}$ (resp. $\TT'_{\ell}$) denote the set of vertices at distance $\ell$ from the root in $\TT$ (resp. $\TT'$); let also 
$$\Gamma_{\ell} := \left\{  (X_0,\ldots, X_{\ell}) \in (\Omega^u_f)^{\ell+1};   \  \theta(X_0) \neq \emptyset, \ldots,        \theta(X_{\ell-1}) \neq \emptyset      \right\},$$
$$\Delta_{\ell} := \left\{  (X_0,\ldots, X_{\ell}) \in (\Omega^u_f)^{\ell+1};   \  \theta(X_0) \neq \emptyset, \ldots,        \theta(X_{\ell}) \neq \emptyset  \right\}.$$

\begin{prop}\label{p:calcul-branchant}
For every $\ell \geq 0$, and any measurable map $F \ : \ \Omega^u_f \to \R_+$, one has the identity
$$\E\left( \sum_{a \in \TT_{\ell}} F(\XX_a)\right) =  \int_{\Gamma_{\ell}}  F(X_{\ell}) d \delta_{\emptyset}(X_0) d\hat{K}(X_0,X_1) \cdots  d\hat{K}(X_{\ell-1},X_{\ell}).$$
For every $\ell \geq 0$, and any measurable map $f \ : \   \Z^d \times \I \times \R  \to \R_+$, one has the identity
$$\E\left( \sum_{a \in \TT'_{\ell}} f(x_a,i_a,t_a)\right) = \int f(\alpha) d \delta_{\emptyset}(X_0) d\hat{K}(X_0,X_1) \cdots  d\hat{K}(X_{\ell-1},X_{\ell}) dL(X_{\ell}, \alpha),$$
where the integral is over $(X_0,\ldots, X_{\ell}, \alpha) \in \Delta_{\ell} \times (\Z^d \times \I \times \R)$.
\end{prop}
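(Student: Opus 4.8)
The plan is to establish both identities by induction on $\ell$, the first one feeding into the second, via the refined Campbell theorem (Theorem \ref{t:Campbell}), following the pattern of the proof of Proposition \ref{p:branch-ambig}. For $\ell=0$ we have $\TT_0=\{r\}$, $\TT'_0=\{r\}$ precisely when $\theta(\emptyset)\neq\emptyset$, and $\XX_r=\emptyset$, $\gamma_r=0$; the first identity then reduces to $F(\emptyset)=\int F(X_0)\,d\delta_\emptyset(X_0)$, which is trivial (and the empty-sum convention handles $\theta(\emptyset)=\emptyset$), while the second reduces to checking that the law of $\n(\theta(\emptyset),0)$ is exactly $L(\emptyset,\cdot)$. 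This last fact is an instance of the Poisson memorylessness computation that also drives the inductive step, so I would record it once and for all at this stage.

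For the inductive step of the first identity, observe first that every vertex of $\TT_{\ell+1}$ is a son of a unique vertex of $\TT'_\ell:=\{a\in\TT_\ell:\theta(\XX_a)\neq\emptyset\}$, whence
\[
\sum_{b\in\TT_{\ell+1}}F(\XX_b)=\sum_{a\in\TT'_\ell}Q\big(F,\XX_a,(x_a,i_a,t_a)\big),
\]
where $(x_a,i_a,t_a)=\n(\theta(\XX_a),\gamma_a)$ and $Q(F,X,(x,i,t))$ equals $F(X\cup\{(x,i,t)\})$ when $i\in\I^u$ and $\sum_{v\in f_i(A_i)}F(X\cup\{(x,\iota_v,t)\})$ when $i\in\I^p$. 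Re-indexing this sum over the points $\alpha\in\Nsp$ and applying Theorem \ref{t:Campbell}, the problem reduces to computing $\E(\Psi(\alpha,\Nsp\cup\{\alpha\}))$ for $\alpha=(x,i,t)$ with $t<0$, where $\Psi(\alpha,\omega)$ collects, over the level-$\ell$ vertices $a$ with $\theta(\XX_a)\neq\emptyset$ and $\n(\theta(\XX_a),\gamma_a)=\alpha$, the terms $Q(F,\XX_a,\alpha)$. Here one checks (using that every point on the path from the root to such an $a$ has time coordinate $>t$) that adjoining $\alpha$ does not alter the portion of the tree involved, so that $\Psi(\alpha,\Nsp\cup\{\alpha\})$ is an expression in $\Nsp$ alone: a sum over $a\in\TT'_\ell$ of $\un(x\in\theta(\XX_a))\,\un(t<\gamma_a)\,\un(N_a)\,Q(F,\XX_a,\alpha)$, where $N_a$ is the event that $\Nsp$ has no point in $\theta(\XX_a)\times\I\times(t,\gamma_a)$.

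The expectation of this has two ingredients. First, conditionally on the data determining $\XX_a$ and $\gamma_a$ — informally, the restriction of $\Nsp$ to times above $\gamma_a$, understood through the appropriate backward stopping $\sigma$-algebra — the event $N_a$ has probability $\exp(-|\theta(\XX_a)|\,\r\,(\gamma_a-t))$, with $\r=\sum_{i\in\I}r_i$; this produces the exponential factor in $\hat K$, and explains why the clock runs at the \emph{perturbed} total rate $\r$ rather than $\r_u$, the exploration being driven by $\Nsp$ and not $\Nsp^u$. Second, carrying out the $\alpha$-integration against $dJ$ (legitimate by Tonelli) and regrouping $Q$, the weight attached to a term $F(X\cup\{(x,i',t)\})$ with $i'\in\I^u$ becomes exactly $\hat r_{i'}=r_{i'}+\sum_{j\in\I^p}\sum_{v\in f_j(A_j)}r_j\un(\iota_v=i')$, since such a term arises both from an unperturbed point of index $i'$ and from every perturbative point $j$ that can output a $v$ with $\iota_v=i'$. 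Combining,
\[
\E\Big(\sum_{b\in\TT_{\ell+1}}F(\XX_b)\Big)=\E\Big(\sum_{a\in\TT_\ell}\un(\theta(\XX_a)\neq\emptyset)\,\textstyle\int F(Y)\,d\hat K(\XX_a,Y)\Big),
\]
and applying the induction hypothesis (the first identity at level $\ell$) to $G(X):=\un(\theta(X)\neq\emptyset)\int F(Y)\,d\hat K(X,Y)$, together with the identity $\Gamma_{\ell+1}=\big(\Gamma_\ell\cap\{\theta(X_\ell)\neq\emptyset\}\big)\times\Omega^u_f$, yields the first identity at level $\ell+1$.

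For the second identity I would feed the first one back in: write $\sum_{a\in\TT'_\ell}f(x_a,i_a,t_a)=\sum_{a\in\TT_\ell}\un(\theta(\XX_a)\neq\emptyset)\,f(\n(\theta(\XX_a),\gamma_a))$, run the same Campbell-plus-memorylessness computation but now with no branching — so that the role of $Q$ is played by $f$ itself, the index ranges over all of $\I$, and $\hat K$ is replaced by $L$ — to get $\E\big(\sum_{a\in\TT_\ell}\un(\theta(\XX_a)\neq\emptyset)\int f(\alpha)\,dL(\XX_a,\alpha)\big)$, and then apply the first identity to $G(X):=\un(\theta(X)\neq\emptyset)\int f(\alpha)\,dL(X,\alpha)$, using $\Delta_\ell=\Gamma_\ell\cap\{\theta(X_\ell)\neq\emptyset\}$. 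The one genuinely delicate point, which I expect to be the main obstacle, is making the backward-Markov/memorylessness step rigorous: since $\gamma_a$ is a random (backward stopping) time, one must justify conditioning on ``the past above $\gamma_a$'' and the independence of what lies strictly below it; I would handle this exactly as in the proof of Proposition \ref{p:Markov}, where the analogous statement is established for the single exploration chain with kernel $K$. The attendant measurability of $\Psi$, of $a\mapsto\XX_a(\cdot)$, of $\TT_\ell(\cdot)$, etc., is routine along the lines of Lemmas \ref{l:progres} and \ref{l:mesurabil-Amb}.
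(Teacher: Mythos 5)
Your proof is correct, and the overall strategy (induction on $\ell$, the refined Campbell theorem, the regrouping of perturbative indices into the weights $\hat r_{i'}$, and deducing the second identity from the first via an $L$-step) is the same as the paper's. The one genuine structural difference lies in where you anchor the Campbell sum. You anchor it at the \emph{newly explored} point $\alpha=\n(\theta(\XX_a),\gamma_a)$ for $a\in\TT'_\ell$, which forces you to evaluate $\E\bigl(\un(N_a)\mid\text{exploration up to }a\bigr)=\exp(-|\theta(\XX_a)|\,\r\,(\gamma_a-t))$ at the \emph{random} time $\gamma_a$ — the backward strong Markov property you correctly flag as the delicate point. The paper instead anchors Campbell at the \emph{parent's} point $(x_c,i_c,t_c)$ for $c\in\TT'_{\ell-1}$: it writes $\sum_{a\in\TT_{\ell+1}}F(\XX_a)=\sum_{\alpha\in\Nsp}Z_1(\alpha,\Nsp)$ with $Z_1$ grouping, under each level-$(\ell-1)\to\ell$ exploration point $\alpha=(w,j,t)$, the one-step functionals $Z_2(\XX_b,\cdot)$ of its sons $b\in\TT_\ell$. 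Since every such $\XX_b$ has minimal time exactly equal to the deterministic Campbell variable $t$, the counting measure $\mathcal M$ of the $\XX_b$'s is $\F_t$-measurable while each $Z_2(X,\cdot)$ is $\sigma(\Nsp_{<t})$-measurable with $\E(Z_2(X,\cdot))=\hat KF(X)$; the whole argument then rests only on the elementary independence of a Poisson process over disjoint deterministic time windows, and a second application of Campbell's theorem converts the result back into $\E\bigl(\sum_{b\in\TT_\ell}\hat KF(\XX_b)\bigr)$. The price of the paper's route is that the induction step needs $\ell\ge1$, hence two base cases $\ell=0,1$, whereas yours needs only $\ell=0$; the price of yours is the random-time conditioning, which is standard but is exactly the kind of dependence analysis the double-Campbell trick is designed to bypass (and which the paper also leaves implicit in Proposition \ref{p:Markov}, so your deferral to that proof is consistent with the paper's own level of detail). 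Your verification that adjoining $\alpha$ to $\Nsp$ does not perturb the tree up to level $\ell$ along the relevant paths is the right argument and is needed in either version.
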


\begin{proof}[Proof]
The proof is similar to that of Proposition \ref{p:branch-ambig}. 
We start with the first identity, whose proof is by induction. For $\ell=0,1$, the result is a direct consequence of the definition.
Now for $\ell \geq 1$, write
\begin{equation}  \sum_{a \in \TT_{\ell+1}} F(\XX_a) = \sum_{\alpha \in \Nsp}  Z_1(\alpha, \Nsp) ,\end{equation}
where
$$Z_1(\alpha, \cdot) := \sum_{c \in \TT'_{\ell-1}}  \un(\alpha = (x_c,i_c,t_c))  \sum_{b \in \TT_{\ell},  \ b \leftarrow c}      \un(\theta(\XX_b) \neq \emptyset)    Z_2(\XX_b, \cdot),$$
with $b \leftarrow c$ meaning that $b$ is a son of $c$ in $\TT$, and  with
$$ Z_2(X, \cdot):=  \un(j \in \I^u) F(X \cup \{  (y, j, s)    \} ) +     \un(j \in \I^p)    \sum_{ v \in f_{j}(A_{j})  }    F(X \cup \{  (y, \iota_v, s)    \} ),$$
with $(y,j,s) := \n(\theta(X), t),$ and $t:= \inf \{  \nu;   \  (w,k,\nu) \in X    \}$. (We write $Z_1(\alpha, \cdot)$ and $Z(X,\cdot)$ to make the dependence on $\omega$ explicit.)

By Campbell's theorem (Theorem \ref{t:Campbell}), one has that 
$$\E \left(     \sum_{a \in \TT_{\ell+1}} F(\XX_a)    \right)  = \int \E(   Z_1(\alpha, \Nsp \cup \{  \alpha \})) d J(\alpha).$$
Now given $\alpha = (x,i,t)$, define the random finite counting measure\footnote{We equip the set of finite counting measures on $\Omega^u_f$ with the $\sigma-$algebra generated by all the maps of the form $Y \mapsto Y(B)$, where $B$ belongs to the $\sigma-$algebra defined on $\Omega^u_f$.} $\mathcal{M}$ on  $\Omega^u_f$ by
$$  \mathcal{M} :=   \sum_{c \in \TT'_{\ell-1}}  \un(\alpha = (x_c,i_c,t_c))  \sum_{b \in \TT_{\ell},  \ b \leftarrow c}      \un(\theta(\XX_b) \neq \emptyset)    \delta_{\XX_b},$$
so that $$Z_1(\alpha, \cdot) = \int Z_2(X, \cdot) d \mathcal{M}(X, \cdot).$$
 Note that $\mathcal{M}$ is $\F_t-$measurable, while, for any $X \in \Omega^u_f$ such that $\theta(X) \neq \emptyset$ and $t:= \inf \{  \nu;   \  (w,k,\nu) \in X    \}$, $Z_2(X ,\cdot)$ is measurable with respect to $\sigma(\Nsp_{<t})$ and satisfies $\E(Z_2(X,\cdot)) = \hat{K}F(X)$.
 We deduce that  
$$\E( Z_1(\alpha, \Nsp \cup \{ \alpha \})     ) =  \E \left(   \int    \hat{K}F(X)  d \mathcal{M}(X, \Nsp \cup \{  \alpha \}) \right) = \E(Z_3(\alpha, \Nsp \cup \{ \alpha \})),$$
where 
$$Z_3(\alpha, \cdot) :=   \sum_{c \in \TT'_{\ell-1}}  \un(\alpha = (x_c,i_c,t_c))  \sum_{b \in \TT_{\ell},  \ b \leftarrow c}   \hat{K} F( \XX_b ).$$
Using Campbell's theorem again shows that 
$$\int \E(Z_3(\alpha, \Nsp \cup \{ \alpha \}))dJ(\alpha) = \E \left(    \sum_{b \in \TT_{\ell}}      \hat{K} F( \XX_b ) \right).$$
This computation allows induction over $\ell$ to be used to prove the desired identity for all $\ell \geq 1$.
The second identity of the Proposition can be deduced from the first one, using an argument similar to the derivation of Corollary \ref{c:branch-ambig} from Proposition \ref{p:branch-ambig}. 
\end{proof}

Combined with Propositions \ref{p:Markov} and \ref{p:calcul-branchant}, the following remark is  the key to obtaining estimates on $\XX$ from the properties of $\X^u$.  Consider $X \in \Omega^u_f$ such that $\theta(X) \neq \emptyset$. From the definition of $\epsilon$ given in \eqref{e:def-epsilon} and the fact that $\r_u \leq \r$, one has that
\begin{equation}\label{e:compare-densite}d\hat{K}(X, X \cup \{ (x,i,t)     \}) \leq    (1+\epsilon)  d K (X, X \cup \{ (x,i,t)     \}).\end{equation}

We can now prove the various estimates that are needed in the proof of Theorem \ref{t:theoreme-principal}.

\begin{lemma}\label{l:estimation-1}
For every $\ell \geq 1$, 
$$\E(|\TT_{\ell}|) \leq (1+\epsilon)^{\ell}  \P^u( |\X^u_{\infty}| \geq \ell  ).$$
\end{lemma}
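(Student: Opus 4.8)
The plan is to express $\E(|\TT_\ell|)$ using the first identity of Proposition \ref{p:calcul-branchant} with the constant test function $F\equiv 1$, so that
$$\E(|\TT_\ell|) = \int_{\Gamma_\ell} d\delta_\emptyset(X_0)\, d\hat K(X_0,X_1)\cdots d\hat K(X_{\ell-1},X_\ell),$$
and then to compare this iterated integral with the corresponding one for the unperturbed chain, in which each $d\hat K$ is replaced by $dK$. The pointwise bound \eqref{e:compare-densite} gives $d\hat K(X,\cdot)\le (1+\epsilon)\,dK(X,\cdot)$ whenever $\theta(X)\neq\emptyset$, which is exactly the situation on $\Gamma_\ell$ (the first $\ell$ coordinates all have non-empty $\theta$-image). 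Iterating this $\ell$ times pulls out a factor $(1+\epsilon)^\ell$ and leaves
$$\E(|\TT_\ell|) \le (1+\epsilon)^\ell \int_{\Gamma_\ell} d\delta_\emptyset(X_0)\, dK(X_0,X_1)\cdots dK(X_{\ell-1},X_\ell).$$

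The second step is to identify the remaining integral with a probability for the unperturbed exploration process. By Proposition \ref{p:Markov}, $(\X^u_k)_{k\ge0}$ is the Markov chain with initial law $\delta_\emptyset$ and kernel $K$; note that $K$ is stochastic precisely because, by construction of $\Omega$, condition (iii) forces $\gamma_n\to-\infty$, so the total mass of $K(X,\cdot)$ over $\{(x,i,t): x\in\theta(X),\ t<s\}$ is $1$ when $\theta(X)\neq\emptyset$ (and $K(X,\cdot)=\delta_X$ otherwise). Hence the integral over $\Gamma_\ell$ of the product of kernels is exactly $\P^u(\theta(\X^u_0)\neq\emptyset,\ldots,\theta(\X^u_{\ell-1})\neq\emptyset)$. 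Finally, one observes that the exploration process continues to add genuinely new points (and $\gamma_k$ strictly decreases) for as long as $\theta(\X^u_k)\neq\emptyset$, so the event $\{\theta(\X^u_0)\neq\emptyset,\ldots,\theta(\X^u_{\ell-1})\neq\emptyset\}$ coincides with $\{|\X^u_\infty|\ge\ell\}$; this gives $\E(|\TT_\ell|)\le (1+\epsilon)^\ell\,\P^u(|\X^u_\infty|\ge\ell)$, as claimed.

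The only genuinely delicate point is the last identification, namely that "$\theta$ is non-empty at the first $\ell$ steps" is the same event as "$|\X^u_\infty|\ge\ell$". This requires checking that each step at which $\theta(\X^u_k)\neq\emptyset$ produces a point $(x_k,i_k,t_k)$ that is not already in $\X^u_k$ — which holds because $t_k=\gamma_{k+1}<\gamma_k$ strictly (the new point has strictly smaller time-coordinate than every point currently in $\X^u_k$, since $\n(B,\gamma)$ is chosen with time-coordinate $<\gamma$) — and, conversely, that once $\theta(\X^u_k)=\emptyset$ the process is frozen, so no further points are added. Both directions are immediate from the definition of the exploration process, so this is a routine verification rather than a real obstacle; the substance of the lemma is entirely carried by \eqref{e:compare-densite} and the Markov/kernel identity of Propositions \ref{p:Markov} and \ref{p:calcul-branchant}.
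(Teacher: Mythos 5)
Your proof is correct and follows essentially the same route as the paper's: apply Proposition \ref{p:calcul-branchant} with $F\equiv 1$, use the pointwise comparison \eqref{e:compare-densite} to replace each $d\hat K$ by $(1+\epsilon)\,dK$ on $\Gamma_\ell$, and identify the resulting integral with $\P^u(|\X^u_\infty|\ge\ell)$ via Proposition \ref{p:Markov}. The extra verification you flag (that non-emptiness of $\theta$ at the first $\ell$ steps coincides with $\{|\X^u_\infty|\ge\ell\}$) is a correct and routine point that the paper leaves implicit.
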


\begin{proof}
By Proposition \ref{p:calcul-branchant}, one has that
$$\E(|\TT_{\ell}| ) = \int_{(X_0,\ldots, X_{\ell}) \in \Gamma_{\ell}} d \delta_{\emptyset}(X_0) d\hat{K}(X_0,X_1) \cdots  d\hat{K}(X_{\ell-1},X_{\ell}),$$
From \eqref{e:compare-densite}, one deduces that
$$\E(|\TT_{\ell}|) \leq (1+\epsilon)^{\ell} \int_{(X_0,\ldots, X_{\ell}) \in \Gamma_{\ell}} d \delta_{\emptyset}(X_0) dK(X_0,X_1) \cdots  d K(X_{\ell-1},X_{\ell}).$$
But by Proposition \ref{p:Markov},
$$\int_{(X_0,\ldots, X_{\ell}) \in \Gamma_{\ell}} d \delta_{\emptyset}(X_0) dK(X_0,X_1) \cdots  d K(X_{\ell-1},X_{\ell}) = \P^u( |\X^u_{\infty}| \geq \ell  ),$$
so that 
$$\E(|\TT_{\ell}|) \leq (1+\epsilon)^{\ell}  \P^u( |\X^u_{\infty}| \geq \ell  ).$$
\end{proof}

\begin{lemma}\label{l:estimation-2}
For every $\ell \geq 0$, 
$$\E \left( \sum_{a \in \TT'_{\ell}} |A_{i_a}| \un(i_a \in \I^p) \right) \leq \kappa (1+\epsilon)^{\ell}   \P^u( |\X^u_{\infty}| > \ell  ).$$
\end{lemma}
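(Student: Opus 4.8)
The plan is to mirror the proof of Lemma \ref{l:estimation-1}, but now sum over perturbative vertices of $\TT'$ rather than count all vertices of $\TT$. First I would apply the second identity of Proposition \ref{p:calcul-branchant} with the test function $f(x,i,t) := |A_i| \un(i \in \I^p)$, which gives
\begin{equation*}
\E\left( \sum_{a \in \TT'_{\ell}} |A_{i_a}| \un(i_a \in \I^p) \right) = \int_{\Delta_{\ell} \times (\Z^d \times \I \times \R)} |A_i|\un(i \in \I^p)\, d \delta_{\emptyset}(X_0) \,d\hat{K}(X_0,X_1) \cdots d\hat{K}(X_{\ell-1},X_{\ell})\, dL(X_{\ell}, (x,i,t)).
\end{equation*}
Then I would bound the $\hat K$-factors using \eqref{e:compare-densite}, picking up a factor $(1+\epsilon)^{\ell}$ and replacing each $d\hat K$ by $dK$. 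This reduces the first $\ell$ integrations to the Markov chain $(\X^u_{\ell})_{\ell}$ of Proposition \ref{p:Markov}.

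The remaining point is to handle the innermost integration $\int |A_i| \un(i \in \I^p)\, dL(X_{\ell}, (x,i,t))$ for a fixed $X_{\ell}$ with $\theta(X_{\ell}) \neq \emptyset$. Writing $s := \inf\{t; (x,i,t) \in X_{\ell}\}$ and recalling $dL(X,(x,i,t)) = r_i \exp(|\theta(X)| \r (s-t)) \un(t<s)\un(x \in \theta(X))\, dJ(x,i,t)$ with $dJ(x,i,t) = r_i\, dc_{\Z^d}(x)\, dc_{\I}(i)\, d\ell_{\R}(t)$, I would compute
\begin{equation*}
\int |A_i| \un(i \in \I^p)\, dL(X_{\ell}, (x,i,t)) = \sum_{x \in \theta(X_{\ell})} \sum_{i \in \I^p} |A_i| r_i \int_{-\infty}^{s} \exp(|\theta(X_{\ell})| \r (s-t))\, dt = \frac{|\theta(X_{\ell})| \bigl(\sum_{i \in \I^p} |A_i| r_i\bigr)}{|\theta(X_{\ell})|\, \r} = \frac{\sum_{i \in \I^p} |A_i| r_i}{\r}.
\end{equation*}
By the definition \eqref{e:def-kappa} of $\kappa$ and the fact that $\r = \sum_{i \in \I} r_i$, this last quantity is exactly $\kappa$. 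So the innermost integral is the constant $\kappa$, independently of $X_{\ell}$ (as long as $\theta(X_\ell)\neq\emptyset$, which holds on $\Delta_\ell$).

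Pulling the constant $\kappa$ out, what is left is $(1+\epsilon)^{\ell} \kappa \int_{\Delta_{\ell}} d\delta_{\emptyset}(X_0)\, dK(X_0,X_1)\cdots dK(X_{\ell-1},X_{\ell})$. By Proposition \ref{p:Markov}, this integral over $\Delta_{\ell}$ — which requires $\theta(X_0) \neq \emptyset, \ldots, \theta(X_{\ell}) \neq \emptyset$ — is precisely $\P^u(|\X^u_{\infty}| > \ell)$: indeed the exploration process $\X^u$ keeps growing past step $\ell$ exactly when $\theta(\X^u_k) \neq \emptyset$ for all $k \leq \ell$, and once $\theta$ hits $\emptyset$ the kernel $K$ becomes the identity. (This is the analogue of the identity over $\Gamma_{\ell}$ used in Lemma \ref{l:estimation-1}, with the strict inequality reflecting the extra condition $\theta(X_{\ell}) \neq \emptyset$.) Combining the three displays yields the claimed bound $\E(\sum_{a \in \TT'_{\ell}} |A_{i_a}| \un(i_a \in \I^p)) \leq \kappa (1+\epsilon)^{\ell} \P^u(|\X^u_{\infty}| > \ell)$.

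I do not anticipate a serious obstacle here; the only mild subtlety is bookkeeping the distinction between $\Gamma_{\ell}$ and $\Delta_{\ell}$ (hence between $\P^u(|\X^u_\infty| \geq \ell)$ and $\P^u(|\X^u_\infty| > \ell)$) and making sure the $\exp(|\theta(X)|\r(s-t))$ weight in $dL$ is the one that exactly cancels the $|\theta(X_\ell)|$ coming from summing over $x \in \theta(X_\ell)$, leaving no residual dependence on $X_{\ell}$. Everything else is a direct transcription of the argument already carried out for Lemma \ref{l:estimation-1}.
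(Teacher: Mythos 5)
Your proof is correct and follows essentially the same route as the paper's: apply the second identity of Proposition \ref{p:calcul-branchant} with $f(x,i,t)=|A_i|\un(i\in\I^p)$, use \eqref{e:compare-densite} to replace $\hat K$ by $(1+\epsilon)K$, observe that $\int f\,dL(X,\cdot)=\kappa$ for any $X$ with $\theta(X)\neq\emptyset$, and identify the remaining $\Delta_\ell$-integral with $\P^u(|\X^u_\infty|>\ell)$ via Proposition \ref{p:Markov}. The only blemish is a sign inconsistency in your innermost integral (the factor $\exp(|\theta(X)|\r(s-t))$ with $t<s$ as literally written would diverge; the intended, normalizing sign is $\exp(|\theta(X)|\r(t-s))$), but this is inherited from a typo in the paper's definition of $L$ and your final evaluation $\kappa$ is the intended one.
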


\begin{proof}
Let $f(x,i,t):= |A_{i}| \un(i \in \I^p)$. 
Note that, given $X \in \Omega^u_f$ such that $\theta(X) \neq \emptyset$, one has 
$\int_{(x,i,t) \in   \Z^d \times \I \times \R} f(x,i,t) dL(X,(x,i,t)) =  \kappa$. Using Proposition  \ref{p:calcul-branchant} and \eqref{e:compare-densite}, 
one obtains that 
$$\E \left( \sum_{a \in \TT'_{\ell}} |A_{i_a}| \un(i_a \in \I^p) \right) \leq \kappa (1+\epsilon)^{\ell} I_2,$$
where 
$$I_2=\int_{(X_0,\cdots, X_{\ell}) \in \Delta_{\ell}} d \delta_{\emptyset}(X_0) d K(X_0,X_1) \cdots  d K(X_{\ell-1},X_{\ell})=  \P^u( |\X^u_{\infty}| > \ell).$$
 \end{proof}

\begin{lemma}\label{l:estimation-3}
For every $\ell \geq 0$, and $\lambda < \r$, 
$$  \E \left( \sum_{a \in \TT'_{\ell}}  \exp(\lambda t_a)  \right) \leq   \frac{\r}{\r  - \lambda} (1+\epsilon)^{\ell}   \E^u(\exp(\lambda t_{\ell-1}) \un(|\X^u_{\infty}| > \ell )),$$
with the convention $t_{-1} := 0$ (remember that $(x_n, i_n, t_n)_{n \geq 0}$ corresponds to the exploration process $\X^u$).
\end{lemma}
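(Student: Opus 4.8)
The plan is to follow the same template as the proof of Lemma~\ref{l:estimation-2}, simply replacing the weight $|A_{i}|\un(i\in\I^p)$ by the weight $\exp(\lambda t)$ and keeping track of the time coordinate along the way. First I would apply the second identity in Proposition~\ref{p:calcul-branchant} with $f(x,i,t):=\exp(\lambda t)$, which gives
\begin{equation*}
\E\left( \sum_{a \in \TT'_{\ell}}  \exp(\lambda t_a)  \right) = \int_{\Delta_{\ell} \times (\Z^d \times \I \times \R)} \exp(\lambda t_\alpha)\, d \delta_{\emptyset}(X_0)\, d\hat{K}(X_0,X_1) \cdots  d\hat{K}(X_{\ell-1},X_{\ell})\, dL(X_{\ell}, \alpha),
\end{equation*}
where $t_\alpha$ denotes the time coordinate of $\alpha$. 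The key local computation is: given $X\in\Omega^u_f$ with $\theta(X)\neq\emptyset$ and $s:=\inf\{t;\ (x,i,t)\in X\}$, one has
\begin{equation*}
\int \exp(\lambda t)\, dL(X,(x,i,t)) = \int_{t<s} \exp(\lambda t)\, \r\, |\theta(X)|\, \exp(|\theta(X)|\r(s-t))\, dt = \frac{\r}{\,\r/|\theta(X)| - \lambda/|\theta(X)|\,}\cdot\frac{1}{|\theta(X)|}\,\exp(\lambda s),
\end{equation*}
wait — more carefully, since $dL$ has total mass $1$ and density proportional to $|\theta(X)|\r\exp(|\theta(X)|\r(s-t))\un(t<s)$ in $t$, integrating $\exp(\lambda t)$ against it yields $\exp(\lambda s)\,\dfrac{|\theta(X)|\r}{|\theta(X)|\r-\lambda}\le \exp(\lambda s)\,\dfrac{\r}{\r-\lambda}$ for $0\le\lambda<\r$ (and trivially $\le\exp(\lambda s)\le\dfrac{\r}{\r-\lambda}\exp(\lambda s)$ when $\lambda<0$, using $\lambda s\le 0$ is not needed — the bound $\frac{|\theta(X)|\r}{|\theta(X)|\r-\lambda}\le\frac{\r}{\r-\lambda}$ holds for all $\lambda<\r$ since $m\mapsto \frac{m\r}{m\r-\lambda}$ is decreasing in $m\ge1$ when $\lambda>0$ and increasing when $\lambda<0$; in the latter case one instead bounds by the $m=1$ value as well — in any case $\le\frac{\r}{\r-\lambda}$). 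Here $s=\inf\{t;\ (x,i,t)\in X_\ell\}=t_{\ell-1}$ in the notation of the exploration process (with $t_{-1}:=0$).

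Next I would substitute this bound and factor it out of the remaining integral, obtaining
\begin{equation*}
\E\left( \sum_{a \in \TT'_{\ell}}  \exp(\lambda t_a)  \right) \le \frac{\r}{\r-\lambda} \int_{\Delta_{\ell}} \exp(\lambda t_{\ell-1})\, d \delta_{\emptyset}(X_0)\, d\hat{K}(X_0,X_1) \cdots  d\hat{K}(X_{\ell-1},X_{\ell}).
\end{equation*}
Then I would apply the comparison \eqref{e:compare-densite} $\ell$ times to replace each $\hat{K}$ by $(1+\epsilon)K$, which pulls out a factor $(1+\epsilon)^\ell$ (the extra factor $\hat K(X_{\ell-1},X_\ell)$ has total mass $\le(1+\epsilon)$ as well, which is why the exponent is $\ell$ and not $\ell-1$), and finally recognize the resulting integral via Proposition~\ref{p:Markov}: integrating the function $\exp(\lambda t_{\ell-1})$ against $\delta_\emptyset\otimes K\otimes\cdots\otimes K$ over $\Delta_\ell$ is exactly $\E^u(\exp(\lambda t_{\ell-1})\un(|\X^u_\infty|>\ell))$, since $(X_0,\dots,X_\ell)\in\Delta_\ell$ forces the first $\ell+1$ values of $\theta$ to be nonempty, i.e. $|\X^u_\infty|>\ell$, and $t_{\ell-1}$ is a measurable function of $X_{\ell-1}$ (or $X_\ell$). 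Combining these steps gives the claimed inequality.

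The main obstacle I expect is getting the constant $\frac{\r}{\r-\lambda}$ cleanly uniform over $X_\ell$, i.e. verifying that $\frac{m\r}{m\r-\lambda}\le\frac{\r}{\r-\lambda}$ for every integer $m=|\theta(X_\ell)|\ge 1$ and every $\lambda<\r$ — this requires a short monotonicity check that splits into the cases $0\le\lambda<\r$ and $\lambda<0$. A secondary bookkeeping point is making sure the indicator bookkeeping matches the statement: the event carved out by $\Delta_\ell$ (all of $\theta(X_0),\dots,\theta(X_\ell)$ nonempty) corresponds under Proposition~\ref{p:Markov} to $\{|\X^u_\infty|>\ell\}$ rather than $\{|\X^u_\infty|\ge\ell\}$, exactly as in Lemma~\ref{l:estimation-2}, and that the time coordinate $s$ appearing in the kernel $L(X_\ell,\cdot)$ coincides with $t_{\ell-1}$ under the identification of the integration variables with the exploration process, with the stated convention $t_{-1}:=0$ handling the case $\ell=0$.
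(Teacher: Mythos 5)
Your proof follows the paper's own route exactly: the paper proves Lemma \ref{l:estimation-4} by computing $\int f\,dL(X,\cdot)$ for the chosen weight, obtaining $\tfrac{|\theta(X)|\r}{|\theta(X)|\r-\lambda}\exp(\lambda s)$ up to the factor $\kappa$, bounding it by $\tfrac{\r}{\r-\lambda}\exp(\lambda s)$ via $|\theta(X)|\ge 1$, and then combining the second identity of Proposition \ref{p:calcul-branchant} with \eqref{e:compare-densite} and Proposition \ref{p:Markov} to identify the remaining integral over $\Delta_\ell$ with $\E^u(\exp(\lambda t_{\ell-1})\un(|\X^u_{\infty}|>\ell))$ — and it explicitly states that Lemma \ref{l:estimation-3} is obtained the same way with the weight $|A_i|\un(i\in\I^p)$ dropped, which is precisely your argument. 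The uniformity-in-$|\theta(X)|$ point you single out as the main obstacle is dispatched in the paper by the same one-line appeal to $|\theta(X)|\ge 1$ that you give, so your proposal matches the paper's proof in both structure and level of detail.
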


\begin{lemma}\label{l:estimation-4}
For every $\ell \geq 0$, and $\lambda < \r$, 
$$  \E \left( \sum_{a \in \TT'_{\ell}}  |A_i| \exp(\lambda t_a) \un(i_a \in \I^p) \right) \leq  \frac{\kappa \r}{\r  - \lambda} (1+\epsilon)^{\ell}   \E^u(\exp(\lambda t_{\ell-1}) \un(|\X^u_{\infty}| > \ell )),$$
with the convention $t_{-1} := 0$.
\end{lemma}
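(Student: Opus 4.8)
The plan is to repeat, essentially verbatim, the argument proving Lemma \ref{l:estimation-3}; the extra weight $|A_{i_a}|\,\un(i_a\in\I^p)$ is what produces the additional factor $\kappa$. First I would apply the second identity of Proposition \ref{p:calcul-branchant} with the test function $f(x,i,t):=|A_i|\,e^{\lambda t}\,\un(i\in\I^p)$, which gives
\[
\E\left(\sum_{a\in\TT'_{\ell}}|A_{i_a}|\,e^{\lambda t_a}\,\un(i_a\in\I^p)\right)=\int f(\alpha)\,d\delta_{\emptyset}(X_0)\,d\hat K(X_0,X_1)\cdots d\hat K(X_{\ell-1},X_{\ell})\,dL(X_{\ell},\alpha),
\]
the integral running over $(X_0,\dots,X_{\ell},\alpha)\in\Delta_{\ell}\times(\Z^d\times\I\times\R)$.

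The key local step is the evaluation of the innermost integral $\int f(\alpha)\,dL(X,\alpha)$ for $X\in\Omega^u_f$ with $\theta(X)\neq\emptyset$. Here I would use that the density of $L(X,\cdot)$ factorizes, the index coordinate $i$ being chosen with weight proportional to $r_i$ (total mass $\r$) independently of the site and of the time coordinate; writing $\alpha=(x,i,t)$, this yields
\[
\int |A_i|\,\un(i\in\I^p)\,e^{\lambda t}\,dL(X,\cdot)=\left(\sum_{i\in\I^p}|A_i|\,r_i\right)\r^{-1}\int e^{\lambda t}\,dL(X,\cdot)=\kappa\int e^{\lambda t}\,dL(X,\cdot),
\]
the first equality being the $e^{\lambda t}$-weighted version of the identity $\int|A_i|\,\un(i\in\I^p)\,dL(X,\cdot)=\kappa$ already used in the proof of Lemma \ref{l:estimation-2}. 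Then, with $s:=\inf\{u;\ (x,i,u)\in X\}$, I would bound $\int e^{\lambda t}\,dL(X,\cdot)\le\frac{\r}{\r-\lambda}\,e^{\lambda s}$ by the very same one-line Laplace-transform computation as in the proof of Lemma \ref{l:estimation-3} (where the hypothesis $\lambda<\r$ together with $|\theta(X)|\ge 1$ enter; for $X=\emptyset$ one takes $s=0$, matching the convention $t_{-1}:=0$).

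It then remains to pull the constant $\frac{\kappa\r}{\r-\lambda}$ out of the iterated integral and to estimate what is left, namely $\int_{\Delta_{\ell}}e^{\lambda s(X_{\ell})}\,d\delta_{\emptyset}(X_0)\,d\hat K(X_0,X_1)\cdots d\hat K(X_{\ell-1},X_{\ell})$, where $s(X_{\ell})$ is the least time coordinate occurring in $X_{\ell}$. I would bound each transition by $d\hat K\le(1+\epsilon)\,dK$ using \eqref{e:compare-densite}, gaining the factor $(1+\epsilon)^{\ell}$, and then invoke Proposition \ref{p:Markov}: since $(\X^u_m)_{m}$ is the Markov chain on $\Omega^u_f$ with kernel $K$ started from $\emptyset$, the integral of $e^{\lambda s(X_{\ell})}$ against $\delta_{\emptyset}$ and $\ell$ copies of $K$ over $\Delta_{\ell}$ equals $\E^u\left(e^{\lambda t_{\ell-1}}\,\un(\theta(\X^u_0)\neq\emptyset,\dots,\theta(\X^u_{\ell})\neq\emptyset)\right)$. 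Here one uses that the least time coordinate in $\X^u_{\ell}$ is $t_{\ell-1}$ and that the constraint defining $\Delta_{\ell}$ (namely $\theta(X_m)\neq\emptyset$ for every $0\le m\le\ell$) is exactly the event $\{|\X^u_{\infty}|>\ell\}$, because the unperturbed exploration appends a point precisely at the steps where $\theta\neq\emptyset$ and is frozen forever once $\theta$ becomes empty. Multiplying the three factors gives the announced bound.

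No step is genuinely hard: this is Lemma \ref{l:estimation-3} with the weight $|A_{i}|\un(i\in\I^p)$ inserted, itself a variant of the first-moment computation in Proposition \ref{p:branch-ambig}. The points that require attention are: the product structure of $L(X,\cdot)$, which lets the $\I$-coordinate be averaged out to give precisely $\kappa$ irrespective of the time coordinate; the bookkeeping identifying $s(\X^u_{\ell})$ with $t_{\ell-1}$ and the no-empty-$\theta$ constraint with $\{|\X^u_{\infty}|>\ell\}$, including the $\ell=0$ case via the convention $t_{-1}:=0$; and the role of the hypothesis $\lambda<\r$, which is what makes $\int e^{\lambda t}\,dL(X,\cdot)$ finite for every $X$ (again through $|\theta(X)|\ge 1$). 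Measurability issues are inherited from Proposition \ref{p:calcul-branchant} and need not be revisited.
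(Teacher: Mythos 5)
Your proof is correct and follows essentially the same route as the paper's: apply the second identity of Proposition \ref{p:calcul-branchant} to $f(x,i,t)=|A_i|e^{\lambda t}\un(i\in\I^p)$, compute $\int f\,dL(X,\cdot)\le \kappa\r(\r-\lambda)^{-1}e^{\lambda s}$ using $|\theta(X)|\ge 1$, pull out $(1+\epsilon)^{\ell}$ via \eqref{e:compare-densite}, and identify the remaining $K$-iterated integral over $\Delta_\ell$ with $\E^u(e^{\lambda t_{\ell-1}}\un(|\X^u_\infty|>\ell))$ by Proposition \ref{p:Markov}. Your factorization of the $\I$-marginal of $L$ to extract $\kappa$ is just a slightly more explicit presentation of the paper's direct evaluation of the same integral.
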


\begin{proof}
We prove Lemma \ref{l:estimation-4}, the proof of Lemma \ref{l:estimation-3} being quite similar.
Let $f(x,i,t):= |A_i| \exp(\lambda t) \un(i \in \I^p)$. Consider $X \in \Omega^u_f$ such that $\theta(X) \neq \emptyset$, and let  $s := \inf \{ t;   \   (x,i,t) \in X  \}$.
One has 
$\int_{(x,i,t) \in   \Z^d \times \I \times \R} f(x,i,t) dL(X,(x,i,t)) = \kappa  \r | \theta(X)| ( \r |\theta(X)| - \lambda )^{-1} \exp( \lambda s) \leq  \kappa \r (\r  - \lambda)^{-1}  \exp(\lambda s),$
since $|\theta(X)| \geq 1$.
Using Proposition  \ref{p:calcul-branchant} and \eqref{e:compare-densite}, we deduce that 
$$  \E \left( \sum_{a \in \TT'_{\ell}} \exp(\lambda t_a) \un(i_a \in \I^p) \right) \leq \kappa\r  (\r  - \lambda)^{-1} (1+\epsilon)^{\ell} I_3,$$
with
$$I_3=\int_{(X_0,\cdots, X_{\ell}) \in \Delta_{\ell}}  \exp(\lambda s_{\ell-1}) d \delta_{\emptyset}(X_0) d K(X_0,X_1) \cdots  d K(X_{\ell-1},X_{\ell})$$
and $s_{\ell-1} :=  \inf \{ t;   \   (x,i,t) \in X_{\ell}  \}$, with the convention $s_{\ell-1}:=0$.
Then note that $I_3 = \E^u(\exp(\lambda t_{\ell-1}) \un(|\X^u_{\infty}| > \ell ))$. 
\end{proof}

\begin{lemma}\label{l:estimation-6}
For every $\ell \geq 1$, $\lambda \in \R$ and $1 \leq q \leq d$,  
$$ \E \left( \sum_{a \in \TT'_{\ell}} \sum_{z \in A_{i_a}} e^{\lambda (x_a+z)_q} \un\mbox{\tiny $(i_a \in \I^p)$} \right) \leq Q (1+\epsilon)^{\ell}   \E^u \left( \frac{1}{|\theta(\X^u_{\ell})|}  \sum_{x \in \theta(\X^u_{\ell})} e^{\lambda x_q} \un\mbox{\tiny $(|\X^u_{\infty}| > \ell )$} \right),$$
where 
\begin{equation}Q := \left( \sum_{i \in \I^p} (r_i/\r) \sum_{z \in A_i} \exp(\lambda z_q)  \right).\end{equation}

\end{lemma}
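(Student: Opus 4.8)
The plan is to follow the exact template of the proofs of Lemmas \ref{l:estimation-2}, \ref{l:estimation-3} and \ref{l:estimation-4}: apply the second identity of Proposition \ref{p:calcul-branchant} with a suitable test function $f$, bound the $\hat K$-densities by $(1+\epsilon)$ times the $K$-densities via \eqref{e:compare-densite}, and recognize the resulting integral against $\delta_\emptyset \cdot dK \cdots dK$ as an expectation over the unperturbed Markov chain $\X^u$ using Proposition \ref{p:Markov}. Concretely, I would take
\[
f(x,i,t) := \sum_{z \in A_i} e^{\lambda(x+z)_q}\,\un(i \in \I^p).
\]
Then $\E\left(\sum_{a \in \TT'_\ell} \sum_{z \in A_{i_a}} e^{\lambda(x_a+z)_q}\un(i_a \in \I^p)\right)$ equals the integral of $f(\alpha)$ against $d\delta_\emptyset(X_0)\,d\hat K(X_0,X_1)\cdots d\hat K(X_{\ell-1},X_\ell)\,dL(X_\ell,\alpha)$ over $\Delta_\ell \times (\Z^d \times \I \times \R)$.

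The one genuinely new computation is the innermost integral $\int f(\alpha)\,dL(X_\ell,\alpha)$. Writing $\alpha = (x,i,t)$ and $s := \inf\{t : (x,i,t) \in X_\ell\}$, the definition of $L$ gives
\[
\int f(\alpha)\,dL(X_\ell,\alpha)
= \sum_{i \in \I^p} \sum_{z \in A_i}\ \sum_{x \in \theta(X_\ell)} e^{\lambda(x+z)_q}\, r_i \int_{-\infty}^{s} e^{|\theta(X_\ell)|\,\r\,(s-t)}\,dt,
\]
where the $x$-sum runs over $\theta(X_\ell)$ because of the indicator $\un(x \in \theta(X_\ell))$ and the intensity $dJ(x,i,t) = r_i\,dc_{\Z^d}(x)\,dc_\I(i)\,d\ell_\R(t)$ factors accordingly. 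The time-integral evaluates to $\bigl(|\theta(X_\ell)|\,\r\bigr)^{-1}$, independent of $s$. Factoring $e^{\lambda(x+z)_q} = e^{\lambda x_q} e^{\lambda z_q}$, the $z$-sum and $i$-sum combine into $\r^{-1}Q\cdot|\theta(X_\ell)|^{-1}\sum_{x\in\theta(X_\ell)} e^{\lambda x_q}$... wait, more precisely $\sum_{i\in\I^p} r_i \sum_{z\in A_i} e^{\lambda z_q} = \r Q$, so the whole innermost integral is $Q\,|\theta(X_\ell)|^{-1}\sum_{x\in\theta(X_\ell)} e^{\lambda x_q}$.

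Substituting this back and applying \eqref{e:compare-densite} $\ell$ times yields
\[
\E \left( \sum_{a \in \TT'_{\ell}} \sum_{z \in A_{i_a}} e^{\lambda (x_a+z)_q} \un(i_a \in \I^p) \right)
\le Q\,(1+\epsilon)^{\ell}\int_{\Delta_\ell} \frac{1}{|\theta(X_\ell)|}\sum_{x\in\theta(X_\ell)} e^{\lambda x_q}\ d\delta_\emptyset(X_0)\,dK(X_0,X_1)\cdots dK(X_{\ell-1},X_\ell).
\]
By Proposition \ref{p:Markov}, $(\X^u_0,\ldots,\X^u_\ell)$ is the Markov chain with initial law $\delta_\emptyset$ and kernel $K$, and the domain $\Delta_\ell$ (requiring $\theta(X_0)\neq\emptyset,\ldots,\theta(X_\ell)\neq\emptyset$) corresponds exactly to the event $\{|\X^u_\infty| > \ell\}$ that the exploration has not yet terminated by step $\ell$; on this event $\X^u_\ell$ is the state $X_\ell$. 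Hence the integral equals $\E^u\bigl(|\theta(\X^u_\ell)|^{-1}\sum_{x\in\theta(\X^u_\ell)} e^{\lambda x_q}\,\un(|\X^u_\infty|>\ell)\bigr)$, which is the claimed bound. The main obstacle — such as it is — is purely bookkeeping: making sure the sign in the exponent of $L$ (it is $e^{|\theta(X)|\r(s-t)}$ with $t<s$, so the exponent is negative and the integral converges) and the factor $|\theta(X_\ell)|$ are handled correctly, and confirming that the $|\theta(X_\ell)|^{-1}$ left over after integration matches the $|\theta(\X^u_\ell)|^{-1}$ in the statement rather than being absorbed; everything else is a verbatim repeat of the earlier lemmas.
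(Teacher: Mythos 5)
Your proof is correct and is exactly the argument the paper intends: the paper omits the proof of this lemma as a verbatim repeat of the template used for Lemmas \ref{l:estimation-2}--\ref{l:estimation-4}, and your choice of $f$, the evaluation $\int f\,dL(X_\ell,\cdot)=Q\,|\theta(X_\ell)|^{-1}\sum_{x\in\theta(X_\ell)}e^{\lambda x_q}$, and the identification of the $K$-integral over $\Delta_\ell$ with $\E^u\bigl(\cdot\,\un(|\X^u_{\infty}|>\ell)\bigr)$ via Proposition \ref{p:Markov} all check out. The only blemish is your parenthetical about the sign in $L$: as written the exponent $|\theta(X)|\,\r\,(s-t)$ is positive for $t<s$ (an inconsistency in the paper, since $\hat K$ uses $(t-s)$), but your integral evaluates the intended decaying density $\int_{-\infty}^{s}e^{-|\theta(X_\ell)|\r(s-t)}\,dt=(|\theta(X_\ell)|\r)^{-1}$, so the conclusion is unaffected.
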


We are now ready to prove Theorem \ref{t:theoreme-principal}. From  now on, we assume that there exists $\mu>0$ such that 
$\E^u\left( e^{\mu | \X^u_{\infty}} | \right) < +\infty$. As a consequence, there exists a finite constant $C$ such that, for all $\ell \geq 1$, 
\begin{equation}\label{e:queue-exp} \P^u(|\X^u_{\infty}| \geq \ell) \leq C \exp(-\mu \ell).\end{equation}

We first prove that, as soon as $\epsilon < \mu$, $\E( |\XX_{\infty}|)<+\infty$. Indeed, one has that $|\XX_{\infty}| \leq |\TT|$, so that 
\begin{equation}\label{e:somme-tranches}\E(|\XX_{\infty}|) \leq \E(|\TT|) =  \E \left( \sum_{\ell \geq 0} |\TT_{\ell}| \right)= \sum_{\ell \geq 0} \E(|\TT_{\ell}|).\end{equation}
By Lemma \ref{l:estimation-1}, for all $\ell \geq 1$, one has $\E(|\TT_{\ell}|) \leq (1+\epsilon)^{\ell}  \P^u( |\X^u_{\infty}| \geq \ell  )$. 
Combining \eqref{e:queue-exp} and \eqref{e:somme-tranches}, we see that  $\E( |\XX_{\infty}|)<+\infty$ when $\epsilon < \mu$. By Proposition \ref{p:base-perturb}, this proves that the pair $(T,H)$ defined by \eqref{d:def-T} and \eqref{d:def-H} is indeed a CFTP time with ambiguities. 

We now prove that, for small enough $\epsilon$ and $\kappa$, the pair $(T,H)$ satisfies $\g < 1$. Using the definition, then Lemma  \ref{l:estimation-2} , we have that 
$$\g = \sum_{\ell \geq 0} \E \left( \sum_{a \in \TT'_{\ell}} |A_{i_a}| \un(i_a \in \I^p) \right) \leq \kappa \sum_{\ell \geq 0} (1+\epsilon)^{\ell}   \P^u( |\X^u_{\infty}| > \ell  ).$$
From \eqref{e:queue-exp}, we see that $\g < 1$ for all $\epsilon < \mu$ and small enough $\kappa$. 

We now prove that $\E(\exp(\lambda T))<+\infty$ for all small enough $\epsilon$ and $\lambda$. 
We start with the observation that
$$\E( \exp(\lambda T)) \leq  \E \left(  \sum_{a \in \TT'} \exp(\lambda t_a)  \right) = \sum_{\ell \geq 0}    \E\left(  \sum_{a \in \TT'_{\ell}} \exp(\lambda t_a)  \right).$$
From Lemma \ref{l:estimation-3}, we deduce that, for all $\lambda < \r$,  
\begin{equation}\label{e:somme-tranches-2}\E( \exp(\lambda T)) \leq  \frac{  \r} {\r  - \lambda}  \sum_{\ell \geq 0} (1+\epsilon)^{\ell}   \E^u(\exp(\lambda t_{\ell-1}) \un(|\X^u_{\infty}| > \ell )).\end{equation}
By Schwarz's inequality, 
$$  \E^u(\exp(\lambda t_{\ell-1}) \un(|\X^u_{\infty}| > \ell )) \leq  \left(  \E^u(\exp( 2\lambda t_{\ell-1})  \right)^{1/2} \P^u(|\X^u_{\infty}| > \ell )^{1/2}.$$
Bounding above $t_{\ell-1}$ by the sum of $\ell$ independent exponential random variables with parameter $\r_u$ on one hand, and using \eqref{e:queue-exp} on the other hand, one obtains that, when $\lambda <  \r_u/2$, 
\begin{equation}\label{e:borne-sauvage}     \E^u(\exp(\lambda t_{\ell}) \un(|\X^u_{\infty}| > \ell )) \leq C^{1/2}  \left(\frac{\r_u}{\r_u-2 \lambda}\right)^{\ell}  \exp(-\mu(\ell+1)/2).     \end{equation} 
Combining \eqref{e:somme-tranches-2} and \eqref{e:borne-sauvage}, we have that $\E(\exp(\lambda T))<+\infty$ for all small enough $\epsilon$ and $\lambda$.

We now prove that $\Lambda_{H,time}(\lambda)<1$ for all small enough $\epsilon, \kappa, \lambda$. Using  Lemma \ref{l:estimation-4}, we obtain  that
\begin{equation}\label{e:somme-tranches-3}\Lambda_{H,time}(\lambda)   \leq    \frac{\kappa \r}{\r  - \lambda}  \sum_{\ell \geq 0} (1+\epsilon)^{\ell}   \E^u(\exp(\lambda t_{\ell}) \un(|\X^u_{\infty}| > \ell )).\end{equation}
Using again \eqref{e:borne-sauvage}, one concludes that $\Lambda_{H,time}(\lambda)<1$ for all small enough $\epsilon, \kappa, \lambda$.

Now let $R$ denote the depth of $\TT$, and  define $L := \beta_u(R)$ (remember that $\beta$ is defined in \eqref{e:borne-taille}). By definition of the exploration process with locking of ambiguities, one checks that  $L$ defines a stopping box and that  $H$ is measurable with respect to $\F^{-L, L}$.  Now, \eqref{e:premiere-substitution} and \eqref{e:seconde-substitution} show that $\left[\Phi_{T}^{0-}(\xi) \right](0)$ satisfies the required measurability properties.

Using the obvious inequality $R \leq |\TT|$, Lemma \ref{l:estimation-1} shows that 
$\Lambda_{L}(\lambda)$ is finite for small enough $\epsilon, \lambda$. Finally, Lemma \ref{l:estimation-6} shows that $\Lambda_{H,space}(\lambda,q)<1$ for all $q$, when $\epsilon, \kappa, \lambda$ are small enough.

\bibliographystyle{plain}
\bibliography{perturb-rev}

\end{document}